\documentclass[12pt]{amsart}
\usepackage{a4wide}

\usepackage{epsfig}
\usepackage{mathtools}
\usepackage{amsmath, mathrsfs}
\usepackage{amssymb}
\usepackage{amsfonts}
\usepackage[centertags]{amsmath}
\usepackage{soul}
\usepackage{graphicx}
\usepackage{xcolor}
\definecolor{deepblue}{RGB}{0,90,170}

\usepackage{amscd}
\usepackage{tikz}
\usepackage{amscd}
\usepackage{graphicx}
\usepackage{color}
\usepackage{verbatim}
\ProvideTextCommandDefault{\cprime}{\tprime}

\newtheorem{theorem}{Theorem}[section]
\newtheorem{lemma}[theorem]{Lemma}
\newtheorem{prop}[theorem]{Proposition}

\renewcommand{\ge}{\geqslant}
\renewcommand{\geq}{\geqslant}
\renewcommand{\le}{\leqslant}
\renewcommand{\leq}{\leqslant}

\newtheorem{remark}[theorem]{Remark}

\numberwithin{equation}{section}

\def \F {\mathcal F}

\def \F {\mathcal F}

\makeatletter
\@namedef{subjclassname@2020}{\textup{2020} Mathematics Subject Classification}
\makeatother

\makeindex

\begin{document}

	\title[Metric Mean Dimension of Amenable Group Actions]{Metric Mean Dimension of Amenable Group Actions: Localization and Non-Uniformity}

	\author{Xinyao He, Guohua Zhang and Ruifeng Zhang}
	
	\address{\vskip 2pt \hskip -12pt Xinyao He}
	
	\address{\hskip -12pt School of Mathematical Sciences, Fudan University, Shanghai 200433, China}
	
	\email{hexy23@m.fudan.edu.cn}

	\address{\vskip 2pt \hskip -12pt Guohua Zhang}
	
	\address{\hskip -12pt School of Mathematical Sciences and Shanghai Center for Mathematical Sciences, Fudan University, Shanghai 200433, China}
	
	\email{chiaths.zhang@gmail.com}

	\address{\vskip 2pt \hskip -12pt Ruifeng Zhang}
	
	\address{\hskip -12pt School of Mathematics, Hefei University of Technology, Hefei 230009, Anhui, China}
	
	\email{rfzhang@hfut.edu.cn}

	\subjclass[2020]{37A35, 37B05.}

	\keywords{Metric mean dimension, Amenable group action, Topological entropy, Packing topological entropy, Bowen's dimensional entropy, Stable sets.}

	\parindent=10pt
	
	\begin{abstract}
		In this paper, we extend Tsukamoto's recent localization formula for metric mean dimension to actions of countable discrete amenable groups, which previously applied only to $\mathbb{R}^k$- and $\mathbb{Z}^k$-actions --- by proving that the global metric mean dimension is characterized by the asymptotic entropy of pointwise $\varepsilon$-stable sets (Theorem \ref{thm6}). To achieve this generalization, we introduce equivalent definitions of the invariant using topological entropy, packing topological entropy, and Bowen's dimensional entropy, respectively. A key technical contribution is our replacement of tiling arguments with Lindenstrauss's combinatorial covering lemma, which enables us to handle the general structure of amenable groups. Furthermore, we resolve all three questions regarding uniformity raised in \cite[{\S 6}]{Yang-Chen-Zhou2024} by constructing counterexamples (Theorem \ref{count} and Proposition \ref{answer}), which demonstrates that the supremum and limit superior in the localization formula cannot generally be interchanged, thereby highlighting the heterogeneous nature of the convergence. These results clarify the uniformity issue and offer insights into the link between local dynamics and global invariants, while our equivalent definitions provide flexible tools for computing metric mean dimension in concrete settings.
	\end{abstract}
	
	\maketitle
	
	\setcounter{tocdepth}{1}
	
	
	\section{Introduction}\label{section1}
	
	Mean dimension is a topological invariant of dynamical systems, introduced by Gromov \cite{Gromov1999} and subsequently developed by Lindenstrauss and Weiss \cite{Lindenstrauss-Weiss2000}. As a natural extension of topological entropy, it was designed to classify systems with infinite entropy by quantifying the ``infinite-dimensionality" of the state space relative to the dynamics. Its metric counterpart, the metric mean dimension \cite{Lindenstrauss-Weiss2000}, was introduced to account for the scale of measurement. For \(\mathbb{Z}\)-actions, these invariants are central to embedding problems and system classification \cite{Gutman-Tsukamoto2020, Lindenstrauss1999-IHESPM}. Recent research has significantly expanded the theory to encompass more general group actions, particularly amenable groups (see for example \cite{Burguet-Shi2022, Chen-Dou-Zheng2022, Dou2017, Gutman2011, Gutman-Lindenstrauss-Tsukamoto2016, Gutman-Qiao-Tsukamoto2019, Hayes2017, Li2013, Li-Liang2018, LiZM2021, Niu2024}).

	Although metric mean dimension provides an upper bound for mean dimension, its calculation remains a significant challenge. For a single transformation, Tsukamoto \cite{Tsukamoto2022} advanced this problem by applying Bowen's foundational idea \cite{R.Bowen1972-TAMS} to clarify the local structure of metric mean dimension. Specifically, he demonstrated that it can be computed via the topological entropy of the
	\(\varepsilon\)-stable sets at individual points --- a surprising result, given that these pointwise
	\(\varepsilon\)-stable sets are typically very small. As an application, Tsukamoto used this approach to simplify his prior proof \cite{Tsukamoto2018} of the mean dimension estimate for the $\mathbb{C}$-action on the space of Brody curves. Subsequently, Yang, Chen, and Zhou \cite{Yang-Chen-Zhou2022, Yang-Chen-Zhou2024} extended Tsukamoto's framework by deriving a formula that expresses metric mean dimension in terms of the packing topological entropy and Bowen's dimensional entropy of these
	\(\varepsilon\)-stable sets.
	
	The computation of metric mean dimension for general group actions is central to the study of a wide range of geometric examples \cite{Gromov1999, Matsuo-Tsukamoto2015, Tsukamoto2018}. It is therefore natural to conjecture that --- as Tsukamoto established in \cite{Tsukamoto2022} for
	$\mathbb{R}^k$- and $\mathbb{Z}^k$-actions --- this invariant can be expressed via the entropy of pointwise \(\varepsilon\)-stable sets.
	
	\smallskip
	
	In this paper, we extend this program to actions of countable discrete amenable groups.
	
	\smallskip
	
	We introduce three equivalent notions of metric mean
	dimension, defined respectively via topological entropy, packing topological entropy, and Bowen's dimensional entropy. Our main result generalizes Tsukamoto's theorem by showing that for amenable group actions, the global metric mean dimension is completely characterized by the asymptotic entropy of these pointwise \(\varepsilon\)-stable sets.
	The proof in the commutative case is essentially an adaptation of Bowen's original argument in \cite{R.Bowen1972-TAMS}, making it a corollary thereof. A key obstruction to generalization is that Tsukamoto's proof relies on the highly regular tiling structure of
	$\mathbb{R}^k$ and $\mathbb{Z}^k$, which does not necessarily exist in general amenable groups. To overcome this, we replace the tiling machinery used in \cite{Tsukamoto2022} with Lindenstrauss's combinatorial covering lemma \cite[Corollary 2.7]{Lindenstrauss2001}, a Vitali-type tool for amenable groups. This enables us to establish a new amenable-group analogue of a key proposition by Bowen \cite[Proposition 2.2]{R.Bowen1972-TAMS}, which forms the technical foundation for our main theorems.
	
	Our localization formula (Theorem~\ref{thm6}) establishes an analogue of Tsukamoto's localization formula for metric mean dimension in the setting of amenable group actions.
	This result, expressed as an equality involving a limsup over scales $\delta \rightarrow 0$ and a supremum over points $x\in X$, naturally raises a subtle analytical question: whether the order of the limsup and the supremum can be interchanged.
	Investigating this (non-)interchangeability is not merely a technical exercise;
	the answer to it illuminates the uniformity of the asymptotic entropy contribution from local stable sets across the space.
	A positive answer would imply a uniform convergence behavior, while a negative answer would reveal an inherent spatial heterogeneity in how local dynamics converge to the global metric mean dimension.
	In fact, we provide a definite negative answer by constructing an explicit counterexample, showing that the pointwise supremum and the scale limit superior in the localization formula cannot be interchanged in general. The construction not only resolves questions raised in \cite[\S 6]{Yang-Chen-Zhou2024}, but also highlights the delicate, non-uniform nature of the localization process: the global metric mean dimension emerges from local data in a manner that is inherently heterogeneous across the phase space.

	\smallskip
	
	We remark that Li \cite{Li2013} has extended the definition of mean dimension to sofic group actions; however, whether our results extend to the sofic setting remains an open question.
	
	\smallskip

	The remainder of this paper is organized as follows. In the next section we state our main results. Section \ref{preli} covers the necessary preliminaries on amenable groups, topological entropy, packing topological entropy, Bowen's dimensional entropy, and also defines the three variants of metric mean dimension that are under study. Section \ref{main} presents the complete proofs of Theorems \ref{thm5} and \ref{thm6}. The core of the argument is Proposition \ref{prop14}, a key technical lemma that bridges local entropy estimates with the global dimension.
	Section \ref{example} is devoted to explicit examples. We first construct a $\mathbb{Z}$-action giving negative answers to all three questions raised in \cite[\S 6]{Yang-Chen-Zhou2024}, and then strengthen this construction to obtain a $\mathbb{Z}$-action in which all of the four quantities $\overline{D}_{\mathrm{int}}$, $\underline{D}_{\mathrm{int}}$, $\overline{D}_{\mathrm{ext}}$ and $\underline{D}_{\mathrm{ext}}$ introduced prior to Theorem \ref{count} can be made pairwise distinct.

	\section{Statements of main results} \label{state}
	
	Let $G$ be a countably infinite discrete group. Denote by $e_{G}$ the unity of the group and $\mathfrak{F}_{G}$ the collection of all finite non-empty subsets of $G$. A sequence $\{H_{n}\}_{n\in\mathbb{N}}\subset \mathfrak{F}_{G}$ is said to be \emph{tempered} if there exists a constant $C>0$, independent of $n\in \mathbb{N}$, such that
	$$\Bigg|\bigcup_{k<n}H_{k}^{-1}H_{n}\Bigg|\leq C|H_{n}|\ \ \ \ \ \ \text{for every}\ n\geq 2,
	$$
	where $|\bullet|$ denotes the cardinality of the set $\bullet$.	The group $G$ is said to be \emph{amenable} if it admits a \emph{F\o lner sequence} $\{F_n\}_{n\in \mathbb{N}}\subset \mathfrak{F}_{G}$, that is,
	$$\lim_{n\rightarrow \infty} \frac{|F_n \cap g F_n|}{|F_n|}= 1\ \ \ \ \ \ \text{for each}\ g\in G.$$
	It is trivial that each amenable group admits a tempered F\o lner sequence $\{F_{n}\}_{n\in\mathbb{N}}$ satisfying
	\begin{equation}
		\lim_{n\to\infty}\dfrac{|F_{n}|}{\log n}=+\infty.
		\label{1.1}
	\end{equation}
	
	By a \emph{$G$-action} $(X,G)$, we mean a nonempty compact metric space $(X, d)$, together with a group $G$ of self-homeomorphisms of $X$ (with $e_{G}$ denoting the
	identity element of $G$).
	\emph{Throughout this paper, we assume $(X, G)$ to be a $G$-action, where $G$ is a countably infinite discrete amenable group
		and $\{F_n\}_{n\in \mathbb{N}}\subset \mathfrak{F}_{G}$ is a F\o lner sequence.}

In the next section we shall introduce three notions of metric mean dimension, each defined via a different entropy measure: the upper metric mean dimension \(\overline{\mathrm{mdim}}_M\) (via topological entropy), the packing upper metric mean dimension \(\overline{\mathrm{mdim}}^P_M\) (via packing topological entropy), and the Bowen upper metric mean dimension \(\overline{\mathrm{mdim}}^B_M\) (via Bowen's dimensional entropy). We emphasize that all quantities involved depend on the choice of the metric $d$ over the state space $X$.

In the remainder of this paper, we shall offer a comprehensive local-global characterization of metric mean dimension for amenable group actions, thereby unifying and extending prior results for $\mathbb{Z}$-actions in \cite{Tsukamoto2022, Yang-Chen-Zhou2022, Yang-Chen-Zhou2024}.

\smallskip
	
	Our first result gives the consistency of these dimensions at both global and local levels.
	
	\begin{theorem}\label{thm5}
		Let $\{F_{n}\}_{n\in\mathbb{N}}$ be a tempered F\o lner sequence satisfying \eqref{1.1}. Then
\begin{equation} \label{f2}
 \overline{\mathrm{mdim}}_{M}(X,\{F_{n}\}_{n\in\mathbb{N}},d) = \max_{x\in X}\overline{\mathrm{mdim}}_{M}(x,\{F_n\}_{n\in\mathbb{N}},d),
\end{equation}
\begin{equation} \label{2.2-2}
\overline{\mathrm{mdim}}_{M}^{P}(X,\{F_{n}\}_{n\in\mathbb{N}},d) =\max_{x\in X}\overline{\mathrm{mdim}}^{P}_{M}(x,\{F_n\}_{n\in\mathbb{N}},d),
\end{equation}
\begin{equation} \label{2.2-3}
\overline{\mathrm{mdim}}_{M}^{B}(X,\{F_{n}\}_{n\in\mathbb{N}},d)=
			\max_{x\in X}\overline{\mathrm{mdim}}^{B}_{M}(x,\{F_n\}_{n\in\mathbb{N}},d),
\end{equation}
\begin{equation*}
\overline{\mathrm{mdim}}_{M}(X,\{F_{n}\}_{n\in\mathbb{N}},d) =
\overline{\mathrm{mdim}}_{M}^{P}(X,\{F_{n}\}_{n\in\mathbb{N}},d) =
\overline{\mathrm{mdim}}_{M}^{B}(X,\{F_{n}\}_{n\in\mathbb{N}},d).
\end{equation*}
	\end{theorem}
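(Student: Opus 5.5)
We have not yet seen the definitions, so I fix the natural ones. For a subset $Z\subset X$, set
\[
\overline{\mathrm{mdim}}_{M}(Z,\{F_n\},d)=\limsup_{\varepsilon\to0}\frac{h(Z,\varepsilon)}{\log(1/\varepsilon)},
\]
where $h(Z,\varepsilon)$ is the spanning/separated entropy of $Z$ along $\{F_n\}$ at scale $\varepsilon$; the packing and Bowen versions replace $h$ by $h^{P}$ and $h^{B}$. For a point $x$, I expect the local quantity to be
\[
\overline{\mathrm{mdim}}_{M}(x,\{F_n\},d)=\limsup_{\varepsilon\to0}\frac{\lim_{\delta\to0}h\bigl(\overline{B}(x,\delta),\varepsilon\bigr)}{\log(1/\varepsilon)}
\]
(or an analogous localization), and similarly for $P$ and $B$.

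\emph{The three global quantities coincide.} I would use the standard comparisons $h^{B}(Z,\varepsilon)\le h^{P}(Z,\varepsilon)\le \overline{h}(Z,\varepsilon)$, each valid up to a change of scale $\varepsilon\mapsto\varepsilon/2$. A scale change by a constant factor disappears after dividing by $\log(1/\varepsilon)$, which gives
\[
\overline{\mathrm{mdim}}^{B}_{M}\le\overline{\mathrm{mdim}}^{P}_{M}\le\overline{\mathrm{mdim}}_{M}.
\]
For the reverse inequality $\overline{\mathrm{mdim}}_{M}\le\overline{\mathrm{mdim}}^{B}_{M}$, I would prove the amenable analogue of Bowen's Proposition 2.2: if a cover $\Gamma$ of $X$ by Bowen balls $B_{F_{n_i}}(x_i,\varepsilon)$ has $\sum_i e^{-s|F_{n_i}|}$ small, then for large $N$ the number of $(F_N,c\varepsilon)$-spanning points is at most $e^{(s+\gamma)|F_N|}$. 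To prove this, I would cover each $F_N g$-piece with translates of the $F_{n_i}$ using Lindenstrauss's covering lemma, which is the amenable substitute for tiling. The tempered condition makes that lemma applicable. I would then count the resulting combinatorial patterns. Condition \eqref{1.1} ensures that the number of choices of lengths $n_i$, at most polynomial in $N$ per site, is $e^{o(|F_N|)}$. This gives $h(X,c\varepsilon)\le h^{B}(X,\varepsilon)+\gamma$. Because the constant $c$ (coming from the covering multiplicity) is independent of $\varepsilon$, it again vanishes after normalization by $\log(1/\varepsilon)$.

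\emph{Global equals the maximum of the local quantities.} The inequality ``$\ge$'' is monotonicity, since $\overline{B}(x,\delta)\subset X$. For ``$\le$'', I would follow the standard compactness argument. Set $D=\overline{\mathrm{mdim}}$ and fix a sequence $\varepsilon_k\to0$ along which the ratio approaches $D$. Entropy is finitely stable: for the Bowen and packing versions it is countably stable as a max, and for the upper capacity version it is stable under finite unions up to a constant. Hence for each $k$ and each $\delta$, covering $X$ by finitely many $\delta$-balls yields a ball that carries the full entropy at scale $\varepsilon_k$. Iterating the covering with $\delta\to0$ produces nested compact sets whose intersection is a point $x_k$ with $\lim_{\delta}h(\overline{B}(x_k,\delta),\varepsilon_k)=h(X,\varepsilon_k)$. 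The difficulty is that $x_k$ depends on $k$, so a single point must serve all scales. I would handle this with a diagonal König-type argument: use a nested sequence of finite covers, and at level $m$ choose a ball containing infinitely many of the good scales $\varepsilon_k$ whose ratio is at least $D-1/m$. A point $x$ obtained this way satisfies, for every $\delta$, that $\overline{B}(x,\delta)$ has entropy ratio close to $D$ at infinitely many scales. This gives a point attaining the maximum. The step requires that $h(Z,\varepsilon)$ be monotone in $\varepsilon$, so that $\delta$ and $\varepsilon$ can be decoupled, and that finite stability hold with a loss of only $o(\log(1/\varepsilon))$. Both hold because the number of balls is independent of $\varepsilon$.

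The same argument yields \eqref{2.2-2} and \eqref{2.2-3} for $P$ and $B$. The main obstacle is the amenable Bowen proposition. The counting of overlapping covers from Lindenstrauss's lemma must produce a multiplicity constant and an error term that are both uniform in $\varepsilon$.
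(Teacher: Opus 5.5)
Your proposal is correct in outline, but both halves follow a different route from the paper's.

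\textbf{Global equality.} In the proof of this theorem the paper does not prove a Bowen-type estimate. It takes the cover $\mathcal U$ of Lemma \ref{lem21} (Lebesgue number $\ge\delta$, diameter $\le 3\delta$). Suppressing $\{F_n\}_{n\in\mathbb N}$ and $d$, it observes $h_{\mathrm{top}}(X,3\delta)\le h_{\mathrm{top}}(X,\mathcal U)$ and $h_{\mathrm{dim}}(X,\mathcal U)\le h^B_{\mathrm{top}}(X,\frac{\delta}{3})$. It then bridges the two with the identity $h_{\mathrm{top}}(X,\mathcal U)=h_{\mathrm{dim}}(X,\mathcal U)$, cited from Dou--Wang--Zhang for tempered sequences satisfying \eqref{1.1}.

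Your direct metric argument is essentially the global version of Proposition \ref{prop14}. It makes the proof self-contained, at the cost of redoing that machinery. Two points need care if you carry it out:
\begin{itemize}
\item Lemma \ref{coveringlem} needs $M$ levels of finite covers, with successively larger lengths satisfying the absorption condition. A single cover, as in Bowen's greedy argument on $\mathbb Z$, is not enough. The covers are available, since $h^B_{\mathrm{top}}(X,\varepsilon)<s$ gives covers with $\sum e^{-s|F_{n_i}|}<1$ and all $|F_{n_i}|$ as large as you like.
\item Condition \eqref{1.1} enters by making the cost of recording each piece's index, roughly $\log n_i$, negligible against $|F_{n_i}|$; this is Lemma \ref{combilem}. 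A count that is polynomial per site would not be $e^{o(|F_N|)}$.
\end{itemize}

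\textbf{Local half.} The scale-dependence you worry about never arises in the paper. A $\limsup$ over scales commutes with a finite maximum, so Proposition \ref{prop20}(2) gives finite stability of $\overline{\mathrm{mdim}}^B_M$ itself. The paper therefore pigeonholes directly at the level of dimension. This yields nested closed sets $W_m$ with $\mathrm{diam}(W_m)\to0$ and $\overline{\mathrm{mdim}}^B_M(W_m)=\overline{\mathrm{mdim}}^B_M(X)$, and their intersection is the point $y$. Your K\"onig-type diagonal argument is valid, but it amounts to re-deriving that commutation.

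The paper runs the argument only once, for the Bowen version. It transfers to the same $y$ via $\overline{\mathrm{mdim}}^B_M(y)\le\overline{\mathrm{mdim}}^P_M(y)\le\overline{\mathrm{mdim}}_M(y)\le\overline{\mathrm{mdim}}_M(X)=\overline{\mathrm{mdim}}^B_M(X)$. Your separate treatment of each entropy has its own payoff. It shows that \eqref{f2}--\eqref{2.2-3} need neither temperedness nor \eqref{1.1}, which the paper records only in Theorem \ref{without-tem}.

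Finally, the paper defines the local quantity as $\inf_K\overline{\mathrm{mdim}}_M(K,\{F_n\}_{n\in\mathbb N},d)$ over closed neighborhoods $K$ of $x$. This puts the limit in the radius outside the $\limsup$ over scales, whereas your displayed guess puts it inside. Your argument proves exactly the outside version: for each radius, it gives large ratios at infinitely many scales. It does not establish the inside version, because your nested infinite sets of good scales may have empty intersection.
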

	
\begin{remark}
			We can similarly introduce three notions of lower metric mean dimension using $\liminf$, namely \(\underline{\mathrm{mdim}}_M\), \(\underline{\mathrm{mdim}}^P_M\), and \(\underline{\mathrm{mdim}}^B_M\). The same arguments show
			\[
			\underline{\mathrm{mdim}}_{M}^{B}(X,\{F_{n}\}_{n\in\mathbb{N}},d)= \underline{\mathrm{mdim}}_{M}^{P}(X,\{F_{n}\}_{n\in\mathbb{N}},d)=
			\underline{\mathrm{mdim}}_{M}(X,\{F_{n}\}_{n\in\mathbb{N}},d).
			\]
However, it is unknown whether any of them equals
			\[
			\max_{x\in X}\underline{\mathrm{mdim}}_{M}(x,\{F_n\}_{n\in\mathbb{N}},d),\ \max_{x\in X}\underline{\mathrm{mdim}}^{P}_{M}(x,\{F_n\}_{n\in\mathbb{N}},d), \ \text{or}\ \max_{x\in X}\underline{\mathrm{mdim}}^{B}_{M}(x,\{F_n\}_{n\in\mathbb{N}},d).
			\]
	\end{remark}

	Our second result provides a precise formula for the global metric mean dimension in terms of various entropies of local stable sets.\footnote{
		The same arguments show that the conclusion of Theorem \ref{thm6} remains valid when
		$\overline{\mathrm{mdim}}$ and $\limsup$
		are replaced everywhere by $\underline{\mathrm{mdim}}$ and $\liminf$, respectively; moreover, we can also consider
		alternatively
		the \(\varepsilon\)-stable set $\Gamma_{\varepsilon}^{\left\{F_n\right\}_{n \in \mathbb{N}}}(x)$.
		In fact \eqref{f1} and its variant (for $\underline{\mathrm{mdim}}$ and $\liminf$)
		have been proved in the setting of
		the F{\o}lner sequence
		$F_n=\{0, 1, \cdots,n-1\},\ n\in\mathbb N$
		for a single transformation in \cite{Tsukamoto2022}.
	}
	
	\begin{theorem}\label{thm6}
		Let $\{F_{n}\}_{n\in\mathbb{N}}$ be a tempered F\o lner sequence satisfying \eqref{1.1}. Then
		\begin{eqnarray} \label{f1}
			\overline{\mathrm{mdim}}_{M}(X,\{F_n\}_{n\in\mathbb{N}},d) & = & \limsup_{\delta\to 0}\sup_{x\in X}\frac{h_{\mathrm{top}}(\Gamma_{\varepsilon}(x),\delta,\{F_{n}\}_{n\in\mathbb{N}},d)}{\log\frac{1}{\delta}} \\
			& = & \limsup_{\delta\to 0}\sup_{x\in X}\frac{h_{\mathrm{top}}^{P}(\Gamma_{\varepsilon}(x),\delta,\{F_{n}\}_{n\in\mathbb{N}},d)}{\log\frac{1}{\delta}}
			\nonumber \\
			& = & \limsup_{\delta\to 0}\sup_{x\in X}\frac{h_{\mathrm{top}}^{B}(\Gamma_{\varepsilon}(x),\delta,\{F_{n}\}_{n\in\mathbb{N}},d)}{\log\frac{1}{\delta}} \nonumber
		\end{eqnarray}
		for every $\varepsilon>0$, where \(\Gamma_\varepsilon(x) = \{ y \in X: d(gx, gy) \le \varepsilon,\ \forall g \in G \}\) is the \emph{\(\varepsilon\)-stable set} of \(x\).
	\end{theorem}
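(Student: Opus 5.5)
Since $\Gamma_\varepsilon(x)\subset X$ and the three local entropies at scale $\delta$ are monotone in the set, all three right-hand sides of \eqref{f1} are at most the corresponding global quantities:
\[
h^{B}_{\mathrm{top}}(\Gamma_\varepsilon(x),\delta)\le h^{P}_{\mathrm{top}}(\Gamma_\varepsilon(x),\delta)\quad\text{and}\quad h_{\mathrm{top}}(\Gamma_\varepsilon(x),\delta)\le h_{\mathrm{top}}(X,\delta).
\]
Together with Theorem~\ref{thm5}, which identifies $\overline{\mathrm{mdim}}_M=\overline{\mathrm{mdim}}_M^P=\overline{\mathrm{mdim}}_M^B$ globally, the upper bounds follow once we also compare the packing entropy of $\Gamma_\varepsilon(x)$ with the global one. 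For this we use the standard inequality $h^P_{\mathrm{top}}(Z,\delta)\le h_{\mathrm{top}}(Z,\delta')$ for suitable $\delta'$ comparable to $\delta$, on the upper box-type entropy of $Z$. Taking $\limsup_{\delta\to0}$ after dividing by $\log\frac1\delta$ removes the constants. Hence each right-hand side is $\le \overline{\mathrm{mdim}}_M(X,\{F_n\},d)$, and the real content is the reverse inequality for the smallest quantity, the Bowen one:
\[
\overline{\mathrm{mdim}}_M(X)\le \limsup_{\delta\to0}\sup_x \frac{h^B_{\mathrm{top}}(\Gamma_\varepsilon(x),\delta)}{\log(1/\delta)}.
\]

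For the reverse inequality I will follow Bowen's scheme in its amenable form, via the key Proposition~\ref{prop14}. Fix $\varepsilon>0$ and a small $\delta\ll\varepsilon$. Bowen's Proposition~2.2 says roughly that
\[
h(X,\delta)\le h(X,\varepsilon)+\sup_x h(\Gamma_\varepsilon(x),\delta).
\]
In our setting $h(X,\varepsilon)$ is finite and fixed, so after dividing by $\log\frac1\delta$ it disappears as $\delta\to0$. The amenable analogue I aim for is
\[
\log \#_{\delta}\text{-spanning}(X,F_n)\le |F_n|\bigl(\sup_x h(\Gamma_\varepsilon(x),\delta')+\eta\bigr)+\log\#_{\varepsilon}(X,F_n)+o(|F_n|)
\]
with $\delta'$ comparable to $\delta$. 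To prove it, fix $\eta$ and $a=\sup_x h^{B}(\Gamma_\varepsilon(x),\delta)$. Using compactness of $\Gamma_\varepsilon(x)$ and upper semicontinuity of $x\mapsto\Gamma_\varepsilon(x)$, for each $x$ find an open neighborhood $U_x$ and a finite family of "Bowen balls" $B_{F}(y,\delta)$, with $F$ ranging over shapes from a finite collection of Følner sets $F_{m}$, which covers the set of points $\varepsilon$-shadowing $x$ on a large finite window. These costs are controlled by $e^{(a+\eta)|F|}$. Bowen's definition with $\limsup$/Carathéodory sums gives such covers with varying shapes, and compactness extracts a finite subcover $U_{x_1},\dots,U_{x_k}$.

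Now take $z\in X$ and the orbit piece over $F_n$. Each $gz$ lies in some $U_{x_i}$, so we can select at every $g\in F_n$ a shape $F_{m(g)}g$. Lindenstrauss's covering lemma \cite[Corollary~2.7]{Lindenstrauss2001} (which uses temperedness) extracts from these translates an $\eta$-disjoint subfamily covering all but $\eta|F_n|$ of $F_n$. On each selected piece we record a Bowen ball label; on the uncovered remainder we record a $\delta$-cover of $X$, at cost $\eta|F_n|\log N(\delta)$. The $\varepsilon$-level coarse data come from an $(F_n,\varepsilon)$-spanning set, costing $e^{|F_n|(h(X,\varepsilon)+\eta)}$.

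Counting the combinatorial choices (positions and shapes of the pieces, at most $e^{o(|F_n|)}$ by \eqref{1.1} and the finiteness of the shape collection) yields the displayed bound with $\delta'$ a fixed multiple of $\delta$. Dividing by $|F_n|\log\frac1\delta$, the terms $h(X,\varepsilon)/\log\frac1\delta$ and $\eta\log N(\delta)/\log\frac1\delta$ are negligible: the latter is small because we choose $\eta$ after $\delta$, with $\log N(\delta)$ finite. Passing to $\limsup_{\delta\to0}$ then gives the claim.

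The main obstacle is this covering step. We must make the local Bowen-entropy covers of $\Gamma_\varepsilon(x)$, which involve variable shapes and hold only near each $x$, into a uniform finite family of Følner shapes. Then Lindenstrauss's lemma must tile $F_n$ without tiles, while also keeping track of the fact that a point $\varepsilon$-shadowing $x_i$ on $F_mg$ really lies in a Bowen ball of $\Gamma_\varepsilon$-type data. This requires an upper semicontinuity argument showing that points shadowing $x$ on a large finite window lie near $\Gamma_\varepsilon(x)$ in the relevant Bowen metric. I would first try to prove it with $\Gamma_{2\varepsilon}$ and then absorb the change of $\varepsilon$ using the independence of the left side from $\varepsilon$.
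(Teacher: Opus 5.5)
Your proposal follows the paper's route: the upper bounds come from monotonicity in the set together with Proposition~\ref{prop20}~(3), since the scale constants vanish after dividing by $\log\frac{1}{\delta}$. The reverse inequality is exactly the amenable Bowen estimate of Proposition~\ref{prop14}, built from finite Carath\'eodory covers of $\Gamma_\varepsilon(x)$ that persist on a finite window uniformly near $x$, Lemma~\ref{coveringlem}, the counting Lemma~\ref{combilem}, and an $(F_n,\varepsilon)$-spanning set, with the small parameter chosen after $\delta$. The one detail you should make explicit is that Lemma~\ref{coveringlem} needs $M$ scale-separated levels of shapes, each available at every $g\in F_n^*$; the paper gets these by repeating the local covering construction $M$ times and pushing the minimal shape size up through the $\varepsilon_i\to 0$ in Bowen's definition (your $\Gamma_{2\varepsilon}$ detour is unnecessary once the window is chosen uniformly over a neighbourhood of each centre).
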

	
	\begin{remark}
		It was also considered in \cite{Dou-Wang-Zhang2025} the following version of \(\varepsilon\)-stable set of \(x\):
		\begin{equation} \label{stable}
			\Gamma_{\varepsilon}^{\left\{F_n\right\}_{n \in \mathbb{N}}}(x)=\bigcap_{n \in \mathbb{N}} \{ y \in X: d(gx, gy) \le \varepsilon,\ \forall g \in F_n \} \supset \Gamma_{\varepsilon}(x).
		\end{equation}
		From the definition of the upper metric mean dimension presented in the next section, the conclusion in Theorem \ref{thm6} is trivially seen to still hold if we replace \(\Gamma_\varepsilon(x)\) by \(\Gamma_{\varepsilon}^{\left\{F_n\right\}_{n \in \mathbb{N}}}(x)\).
	\end{remark}

	\begin{remark} \label{tem}
		The proofs of Theorems \ref{thm5} and \ref{thm6} rely crucially on the Lindenstrauss covering lemma, which accounts for the temperedness requirement on the F\o lner sequence. As shown by Theorem \ref{without-tem}, using a standard technique to pass to subsequences, we can nevertheless extend \eqref{f2}, \eqref{2.2-2}, \eqref{2.2-3} and \eqref{f1} to arbitrary F\o lner sequences.
	\end{remark}

	For simplicity, we introduce the following notations: for every $\varepsilon>0$,
	\[
	\begin{aligned}
		\overline{D}_{\mathrm{int}}(X,\varepsilon,\{F_n\}_{n\in\mathbb{N}},d)&:=\limsup_{\delta\to 0}\sup_{x\in X}
		\frac{h_{\mathrm{top}}(\Gamma_{\varepsilon}^{\{F_n\}_{n\in\mathbb{N}}}(x),\delta,\{F_n\}_{n\in\mathbb N},d)}
		{\log \frac{1}{\delta}},\\
		\underline{D}_{\mathrm{int}}(X,\varepsilon,\{F_n\}_{n\in\mathbb{N}},d)&:=\liminf_{\delta\to 0}\sup_{x\in X}
		\frac{h_{\mathrm{top}}(\Gamma_{\varepsilon}^{\{F_n\}_{n\in\mathbb{N}}}(x),\delta,\{F_n\}_{n\in\mathbb N},d)}
		{\log \frac{1}{\delta}},\\
		\overline{D}_{\mathrm{ext}}(X,\varepsilon,\{F_n\}_{n\in\mathbb{N}},d)&:=\sup_{x\in X}\limsup_{\delta\to 0}
		\frac{h_{\mathrm{top}}(\Gamma_{\varepsilon}^{\{F_n\}_{n\in\mathbb{N}}}(x),\delta,\{F_n\}_{n\in\mathbb N},d)}
		{\log \frac{1}{\delta}},\\
		\underline{D}_{\mathrm{ext}}(X,\varepsilon,\{F_n\}_{n\in\mathbb{N}},d)&:=\sup_{x\in X}\liminf_{\delta\to 0}
		\frac{h_{\mathrm{top}}(\Gamma_{\varepsilon}^{\{F_n\}_{n\in\mathbb{N}}}(x),\delta,\{F_n\}_{n\in\mathbb N},d)}
		{\log \frac{1}{\delta}}.
	\end{aligned}
	\]
	The subscripts int and ext indicate whether the supremum over $x\in X$
	is taken internally or externally with respect to the limit in $\delta$.
	In particular,
	\begin{equation} \label{relation}
		\begin{aligned}
			\overline{D}_{\mathrm{int}}(X,\varepsilon,\{F_n\}_{n\in\mathbb{N}},d) &\ge \max \{
			\underline{D}_{\mathrm{int}}(X,\varepsilon,\{F_n\}_{n\in\mathbb{N}},d), \overline{D}_{\mathrm{ext}}(X,\varepsilon,\{F_n\}_{n\in\mathbb{N}},d)\} \\&\ge 	\underline{D}_{\mathrm{ext}}(X,\varepsilon,\{F_n\}_{n\in\mathbb{N}},d),
		\end{aligned}
	\end{equation}
	and
	\[
	\begin{aligned}
		&\overline{D}_{\mathrm{int}}(X,\varepsilon,\{F_n\}_{n\in\mathbb{N}},d) = \overline{\mathrm{mdim}}_{M}(X,\{F_n\}_{n\in\mathbb{N}},d),
		\\&
		\underline{D}_{\mathrm{int}}(X,\varepsilon,\{F_n\}_{n\in\mathbb{N}},d) = \underline{\mathrm{mdim}}_{M}(X,\{F_n\}_{n\in\mathbb{N}},d).
	\end{aligned}
	\]

Our last result suggests that the contribution of local dynamics to the global metric mean dimension varies from point to point.

	\begin{theorem}\label{count}
		For any $0\le a\leq b$, there exist a $\mathbb{Z}$-action $(X, \mathbb{Z})$ with a compatible metric $d$ such that, for every $\varepsilon\in(0,1)$ and the F{\o}lner sequence
		$F_n=\{0, 1, \cdots,n-1\},\ n\in\mathbb N$,
		\[
		\begin{aligned}
			&\overline{D}_{\mathrm{int}}(X,\varepsilon,\{F_n\}_{n\in\mathbb{N}},d)=1+b,\quad
			\underline{D}_{\mathrm{int}}(X,\varepsilon,\{F_n\}_{n\in\mathbb{N}},d)=1+a,\quad
			\\&\overline{D}_{\mathrm{ext}}(X,\varepsilon,\{F_n\}_{n\in\mathbb{N}},d)=b,\quad
			\quad\,\,\,\underline{D}_{\mathrm{ext}}(X,\varepsilon,\{F_n\}_{n\in\mathbb{N}},d)=a.
		\end{aligned}
		\]
		In particular, by choosing suitable $a$ and $b$, one obtains a $\mathbb{Z}$-action for which all of these four quantities are pairwise distinct for every $\varepsilon\in(0,1)$.
	\end{theorem}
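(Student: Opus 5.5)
We will build $X$ as a disjoint union (one-point compactified) of two kinds of pieces. The first is a piece $Y$ that carries metric mean dimension $1+b$ (upper) and $1+a$ (lower) globally, but whose $\varepsilon$-stable sets are individually small. The second is a family of pieces whose stable sets carry pointwise dimensions between $a$ and $b$.

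\textbf{Global piece.} Take the shift on $([0,1]^{\mathbb N})^{\mathbb Z}$ with cubes of dimensions $N_k$, or a union of subshifts $Y_k=(K_k)^{\mathbb Z}$ placed at distance scales $\delta_k\to 0$. Here $K_k$ is a Cantor-type set, or an interval rescaled to diameter $\sim\varepsilon$, chosen so that its box-counting function at scale $\delta$ oscillates. Then $\log\#(\delta\text{-cover})/\log(1/\delta)$ will have limsup $b$ and liminf $a$.

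To realize the ``$+1$'', embed a $[0,1]$-shift component, $[0,1]^{\mathbb Z}$ with the metric $\sum 2^{-|n|}|x_n-y_n|$, whose metric mean dimension is $1$. By Theorem \ref{thm6}, $\overline{D}_{\mathrm{int}}=\overline{\mathrm{mdim}}_M$ and $\underline{D}_{\mathrm{int}}=\underline{\mathrm{mdim}}_M$. The internal quantities are therefore computed globally, and I would arrange the product $[0,1]^{\mathbb Z}\times K^{\mathbb Z}$ with the max metric. Then $\mathrm{mdim}_M$ adds: $1$ plus the oscillating box dimension $b$ (upper) and $a$ (lower) of $K$. Product additivity holds for the limsup only up to an inequality, so I would choose $K$ with matching scales and $[0,1]$ exactly self-similar to get equality.

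\textbf{Pointwise part.} The key is that for each fixed $x$, $\Gamma_\varepsilon(x)$ should only see the $K$-coordinates and not the $[0,1]$ coordinates. Otherwise $\overline{D}_{\mathrm{ext}}$ would also pick up the $+1$. To arrange this, I would replace the $[0,1]$-shift by a union of pieces $Z_m=\{0,1/m,\dots,1\}^{\mathbb Z}$ (full shifts on $m+1$ symbols at spacing $1/m$), accumulating onto a fixed point. Each $Z_m$ has finite entropy $\log(m+1)$, so the contribution to $h_{\mathrm{top}}(\Gamma_\varepsilon(x),\delta)/\log(1/\delta)$ vanishes as $\delta\to0$ for each fixed $x$. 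Yet $\sup_x$ at scale $\delta\approx 1/m$ recovers $\log m/\log(1/\delta)\approx 1$. This is exactly the non-interchangeability mechanism.

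The stable sets $\Gamma_\varepsilon(x)$ for $x$ in the $K$-part should contain $K^{\mathbb Z}$-type sets (diameter of $K<\varepsilon$), giving pointwise limsup $b$ and liminf $a$. Hence $\overline{D}_{\mathrm{ext}}=b$ and $\underline{D}_{\mathrm{ext}}=a$, provided the $Z_m$-contributions give pointwise $0$.

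\textbf{Order of steps and main obstacle.}
\begin{enumerate}
\item Define $X=\big(\bigsqcup_m Z_m\times K^{\mathbb Z}\big)\cup\{\text{limit}\}$, with metrics scaled so that the diameter is below $1$ and every $\varepsilon\in(0,1)$ works.
\item Compute the pointwise entropies of stable sets, upper bounded via covers, using that $Z_m$ has bounded entropy.
\item Compute the global quantities using the formula of Theorem \ref{thm6}.
\end{enumerate}
The main obstacle is the uniformity in $\varepsilon\in(0,1)$. Stable sets at large $\varepsilon$ may contain whole $Z_m$ pieces, and the limit point's stable set may contain infinitely many $Z_m$, which would push its pointwise limsup up to $1+b$. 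To prevent this, I would place $Z_m$ at mutual distance at least $1$ from the limit point in some coordinate (e.g.\ a separating marker coordinate), so that each stable set meets only one $Z_m$.
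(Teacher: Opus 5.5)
Your overall architecture matches the paper's: a ``level'' system whose individual stable sets have finite entropy while $\sup_x$ sees dimension $1$ at every small scale, multiplied by a homogeneous factor carrying $a$ and $b$ (the paper's $X\times Y_E$). But the level system as you specify it does not work, and that is exactly where the non-interchangeability has to be built.

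Taken literally, the sets $Z_m=\{0,1/m,\dots,1\}^{\mathbb Z}$ are nested ($Z_m\subset Z_{km}$) and their union is dense in $[0,1]^{\mathbb Z}$. So the closure of your $X$ contains $[0,1]^{\mathbb Z}\times K^{\mathbb Z}$. There $\Gamma_\varepsilon(x)$ contains a full product of intervals of length $\asymp\varepsilon$, which contributes pointwise dimension $1$. Thus $\overline{D}_{\mathrm{ext}}$ picks up the $+1$ and nothing is separated. Markers do not help unless the fine grids of the levels also shrink to a point, since otherwise the limit set still contains a cube $I^{\mathbb Z}$ with $I$ a nondegenerate interval.

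If instead the levels really accumulate onto a fixed point $p$, their diameters tend to $0$. Since they are invariant, for each $\varepsilon$ all but finitely many of them then lie inside $\Gamma_\varepsilon(p)$, and $p$ alone has pointwise ratio $1$. No coordinate can keep the $Z_m$ at distance $\ge 1$ from a point they converge to. So your proposed resolution of the ``main obstacle'' is self-contradictory.

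What is needed, and what the paper does, is a \emph{dynamical} separation combined with grids that shrink at a controlled rate:
\begin{itemize}
\item Level $n$ carries a marker symbol $2$ at every position of a residue class mod $P_n$, with $P_n$ strictly increasing. The levels then converge not to a point but to the countable zero-entropy set $X^{(0)}$ of sequences with at most one $2$. Points of different levels (or of a level and $X^{(0)}$) differ by at least $1$ at infinitely many nonnegative coordinates, so each $\Gamma_\varepsilon(x)$ stays inside a single $\sigma^iX_n$ (Proposition~\ref{stableset}).
\item The remaining symbols of level $n$ must form a grid of diameter $\ell_n\to 0$ whose spacing $s_n$ is much finer in logarithmic scale, e.g.\ $\log(1/\ell_{n+1})=o(\log(1/s_n))$. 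Without such a condition $\sup_x$ can recover less than $1$; for instance $\ell_m=1/m$, $s_m=1/m^2$ gives only $1/2$.
\item The paper's $G_n\subset(2^{-n},2^{-n+1}]$ with spacing $2^{-n^2}$ satisfies this. It gives $\underline{D}_{\mathrm{int}}=\overline{D}_{\mathrm{int}}=1$ for the level system, with lower bound $(m^2-m)/(m+2)^2\to 1$. It also makes $\Gamma_\varepsilon(x)$ the whole piece $\sigma^iX_n$ containing $x$ once $2^{-n}\le\varepsilon$.
\end{itemize}

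There are two smaller gaps in your $K$-factor. First, a Cantor set in $\mathbb R$ cannot give $b>1$; the paper uses the snowflake metric $\rho_c=\rho^{1/c}$. Second, ``$\mathrm{diam}\,K<\varepsilon$'' cannot hold for every $\varepsilon\in(0,1)$ with a single $K$. You need a homogeneity property ensuring that, for each $\varepsilon$, some $\varepsilon$-stable set still has upper and lower dimensions $b$ and $a$. The paper gets this from $\Gamma_\varepsilon(y_\varepsilon)\supset Y_{E\cup\{-m,\dots,m\}}$ together with the invariance of the dimensions under finite modifications of $E$ (Proposition~\ref{YE}).
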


\begin{remark}
Theorem \ref{count} shows that, the order of the limsup over scales $\delta \rightarrow 0$ and the supremum over points $x\in X$ in Theorem \ref{thm6} can not be interchanged. Indeed, it demonstrates, firstly, that in the localization formula the supremum and limit superior of topological entropy cannot be interchanged; by Theorem \ref{thm6} and Proposition \ref{prop20} (3) below,
the same conclusion holds for packing topological entropy and Bowen's dimensional entropy. Moreover, the order of the liminf and the supremum for topological entropy can not be interchanged, if $\overline{\mathrm{mdim}}$ and $\limsup$ in \eqref{f1}
	are replaced by $\underline{\mathrm{mdim}}$ and $\liminf$, respectively. In particular, this answers \cite[\S 6, Question (1)]{Yang-Chen-Zhou2024} in the negative.
	We remark that, in fact, the same construction also applies to the preimage
	and asymptotic stable-set versions as shown by Proposition \ref{answer}; hence we also resolve
\cite[\S 6, Questions~(2) and (3)]{Yang-Chen-Zhou2024} in the negative.
\end{remark}

	\section{Preliminaries} \label{preli}
	
	In this section let us recall necessary preliminaries.
	
	\subsection{Basic facts on amenable groups and their actions}\
	
	We begin by recalling essential tools from the entropy theory of amenable group actions.
	
	A key tool in the entropy theory of amenable group actions is the well-known Ornstein-Weiss convergence lemma \cite{Ornstein-Weiss1987}. The version presented here follows \cite[1.3.1]{Gromov1999}.

	\begin{lemma} \label{OW-Lemma}
		Let $f: \mathfrak{F}_G \rightarrow \mathbb{R}_+$ be an invariant subadditive function, that is,
		$f (F g)= f (F)$ and $f (E\cup F)\le f (E)+ f (F)$ whenever $E, F\in \mathfrak{F}_G$ and $g\in G$.
		Then the limit $\lim\limits_{n\to \infty} \frac{f (F_n)}{|F_n|}$ exists and is independent of the selection of the F\o lner sequence $\{F_{n}\}_{n\in\mathbb{N}}$.
	\end{lemma}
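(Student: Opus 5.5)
The plan is the standard one for the Ornstein--Weiss lemma, following Gromov's approach: first prove convergence along one F\o lner sequence, then show the limit does not depend on the sequence. Set
\[
\lambda=\inf_{K\in\mathfrak{F}_G}\frac{f(K)}{|K|}\ \ \text{(or a suitable variant)} \qquad\text{and}\qquad \Lambda=\limsup_{n\to\infty}\frac{f(F_n)}{|F_n|}.
\]
Note that $\Lambda<\infty$. Indeed, invariance and subadditivity give $f(F)\le |F|\,f(\{e_G\})$, since $F=\bigcup_{g\in F}\{e_G\}g$. The goal is to show $\liminf_n f(F_n)/|F_n|\ge \Lambda'$, where $\Lambda'$ is a quantity computed from an arbitrary second F\o lner sequence $\{F'_m\}$. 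This gives both existence of the limit and independence of the sequence at once.

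\textbf{Step 1: setup via quasitiling.} Fix $\eta>0$. Using the Ornstein--Weiss quasitiling theorem, choose finitely many sets $K_1,\dots,K_N$ from the second sequence $\{F'_m\}$ such that
\[
\frac{f(K_i)}{|K_i|}\le \liminf_m\frac{f(F'_m)}{|F'_m|}+\eta \qquad\text{for each } i.
\]
The quasitiling theorem supplies such sets, chosen increasingly invariant (each $K_{i+1}$ is very invariant relative to $K_i$). Then, for all sufficiently invariant $F$, in particular $F=F_n$ with $n$ large, $F$ is $\eta$-quasitiled by right translates $K_ig$. These translates are $\eta$-disjoint: each $K_ig$ contains a subset $\tilde K_{i,g}$ with $|\tilde K_{i,g}|\ge(1-\eta)|K_i|$, and these subsets are pairwise disjoint. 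Together the translates cover at least $(1-\eta)|F|$ points of $F$.

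\textbf{Step 2: subadditivity estimate.} Write $F$ as the union of the tiles and the leftover set $R$, where $|R|\le\eta|F|$. Subadditivity and invariance give
\[
f(F)\le\sum f(K_ig)+f(R)\le\sum |K_i|\Bigl(\liminf_m\frac{f(F'_m)}{|F'_m|}+\eta\Bigr)+\eta|F|\,f(\{e_G\}).
\]
By $\eta$-disjointness, $\sum|K_i|\le |F|/(1-\eta)$. Dividing by $|F|$ and letting $n\to\infty$ and then $\eta\to0$ yields
\[
\limsup_n\frac{f(F_n)}{|F_n|}\le\liminf_m\frac{f(F'_m)}{|F'_m|}.
\]
Applying this with $F'_m=F_m$ gives existence of the limit. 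Applying it with the two sequences swapped gives independence of the sequence.

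\textbf{Main obstacle.} The only real difficulty is the quasitiling theorem itself. It needs no cancellation from $f$: only subadditivity and nonnegativity are used, so an upper bound by a cover suffices. This is why the argument works with covers rather than disjoint tilings. I would prove it by the usual greedy Ornstein--Weiss induction, placing translates of $K_N$, then $K_{N-1}$, and so on. At each stage one picks a maximal $\eta$-disjoint family of translates inside the uncovered part. The invariance of $F$ under $K_i^{-1}K_i$, together with a counting argument, shows that each stage covers a fixed fraction of what remains. After $N\approx\log(1/\eta)/\eta$ stages, the uncovered portion is below $\eta|F|$.

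Since the paper cites \cite[1.3.1]{Gromov1999}, I would in practice invoke that quasitiling result as a black box.
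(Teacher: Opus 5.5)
Your argument is correct. It is the standard Ornstein--Weiss quasitiling proof sketched in \cite[1.3.1]{Gromov1999}, which is exactly what the paper relies on: the paper states the lemma and cites Gromov without giving its own proof. Your key inequality $\limsup_n f(F_n)/|F_n|\le\liminf_m f(F'_m)/|F'_m|$ follows from subadditivity, $f(\{g\})=f(\{e_G\})$ and $\eta$-disjointness, and it gives existence and independence at once.

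The remaining loose ends are cosmetic. The quantity $\lambda=\inf_K f(K)/|K|$ is never used. If the quasitiling theorem you invoke requires $e_G\in K_1\subset\cdots\subset K_N$, replace the chosen sets by $K_1'\cup\{e_G\}$ and $K_i'\cup K_{i-1}$, taking each $K_i'$ far enough along the sequence relative to $K_{i-1}$. By subadditivity and $f(K)\le|K|\,f(\{e_G\})$, this perturbs each ratio $f(K_i)/|K_i|$ and each invariance defect only negligibly.
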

	
	The following combinatorial covering lemma due to Lindenstrauss \cite[Corollary 2.7]{Lindenstrauss2001} is central to our proofs.
	Recall that a family \(\mathcal{F} \subset \mathfrak{F}_G\) is \emph{\(\gamma\)-disjoint} (\(0 \le \gamma < 1\)) if for each \(A \in \mathcal{F}\) there exists \(A' \subset A\) with \(|A'| \ge (1-\gamma)|A|\) such that the sets \(A'\) are pairwise disjoint for different \(A \in \mathcal{F}\). In
	particular, a 0-disjoint family means equivalently a disjoint family.
	
	\begin{lemma}\label{coveringlem}
		For any \(\delta \in (0, 10^{-4})\), \(C > 0\), and \(D \in \mathfrak{F}_G\), there exists \(M \in \mathbb{N}\) large enough (depending on \(\delta, C, D\)) such that: Once \(H \in \mathfrak{F}_G\) and families \(\{H_{i,j}: i=1,\cdots,M, j=1,\cdots,N_i\}\) and \(\{A_{i,j}: i=1,\cdots,M, j=1,\cdots,N_i\}\) satisfy:
		\begin{enumerate}

			\item
			Letting \(H_{i,*} = \bigcup_{j=1}^{N_i} H_{i,j}\), we have for all \(i=1,\cdots,M, k=2,\cdots,N_i\),
			\[
			\left| \bigcup_{k'<k} H_{i,k'}^{-1} H_{i,k} \right| \le C |H_{i,k}|,
			\]
			and for all \(i=2,\cdots,M, k=1,\cdots,N_i\),
			\[
			\left| \bigcup_{i'<i} D H_{i',*}^{-1} H_{i,k} \right| \le (1+\delta) |H_{i,k}|.
			\]
			
			\item For all \(i=1,\cdots,M, j=1,\cdots,N_i\), we have \(H_{i,j} A_{i,j} \subset H\).
		\end{enumerate}
		Define \(A_{i,*} = \bigcup_{j=1}^{N_i} A_{i,j}\) and \(\alpha = \frac{\min_{1\le i \le M} |D A_{i,*}|}{|H|}\). Then the collection \(\{H_{i,j}a: a \in A_{i,j}, i=1,\cdots,M, j=1,\cdots,N_i\}\) admits a \(10\delta^{\frac{1}{4}}\)-disjoint subfamily \(\mathcal{F}\) satisfying \(|\cup \mathcal{F}| \ge (\alpha - \delta^{\frac{1}{4}}) |H|\).
	\end{lemma}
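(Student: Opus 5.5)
This is Lindenstrauss's combinatorial covering lemma, and I would follow the structure of his proof in \cite{Lindenstrauss2001} rather than invent a new argument. The plan has two layers. First, a \emph{single-scale} Vitali-type statement. Second, a \emph{multi-scale} iteration that runs through the indices \(i=M, M-1,\dots,1\) in decreasing order, so that the largest (most invariant) shapes are placed first. At each stage the already covered region is enlarged by a definite fraction of what is still missing from \(\alpha|H|\).

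\textbf{Single-scale step.} Fix \(i\), the shapes \(H_{i,1},\dots,H_{i,N_i}\), and the centers \(A_{i,j}\). Suppose the shapes are tempered in the sense of condition (1): \(|\bigcup_{k'<k}H_{i,k'}^{-1}H_{i,k}|\le C|H_{i,k}|\). I would show that \(\{H_{i,j}a: a\in A_{i,j}\}\) contains a \(\gamma\)-disjoint subfamily whose union has cardinality at least \(c(C,\gamma)\,|A_{i,*}|\), where \(c(C,\gamma)>0\) depends only on \(C\) and \(\gamma\).

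The method is Lindenstrauss's random selection. Process \(k=N_i,N_i-1,\dots,1\) from the largest shape down. Include each center \(a\in A_{i,k}\) independently with probability proportional to \(1/|H_{i,k}|\), scaled by a small constant depending on \(C\). Then discard the parts of \(H_{i,k}a\) already covered, and keep a translate only if at least a \((1-\gamma)\) fraction of it is new.

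Temperedness enters through the count \(|\bigcup_{k'<k}H_{i,k'}^{-1}H_{i,k}|\le C|H_{i,k}|\). It bounds the expected overlap of a new translate \(H_{i,k}a\) with the translates chosen earlier. A first-moment computation then gives the expected size of the union. Here I would allow a relative version, in which a previously covered set \(U\) is removed and the guarantee becomes \(\ge c\,(|A_{i,*}\setminus(\text{stuff near }U)|)\), because the multi-scale step needs exactly this.

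\textbf{Multi-scale iteration.} Let \(U_{i}\) be the union of the (trimmed) sets chosen at scales \(>i\). The second half of condition (1) says that each \(H_{i,k}\) is almost invariant under \(D H_{i',*}^{-1}\) for \(i'<i\). Read in reverse, this says the boundaries of the big sets, as seen from the smaller scales, are tiny. So a translate \(H_{i',j}a\) with \(a\in A_{i',*}\) either lies essentially inside \(U_i\) or essentially outside it, up to a \(\delta\)-proportion error. The centers whose translates sit mostly outside \(U_i\) form a set of size roughly \(|DA_{i',*}|-|U_i|-O(\delta)|H|\ge(\alpha-|U_i|/|H|-O(\delta))|H|\).

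Apply the relative single-scale step to these centers. It covers at least a fixed fraction \(\eta=\eta(C)\) of the deficit \(\alpha|H|-|U_i|\). Condition (2), \(H_{i,j}A_{i,j}\subset H\), guarantees everything stays inside \(H\). After \(M\) stages the deficit is at most \((1-\eta)^M|H|+O(M\delta)\)-type error terms. Choosing \(\gamma\) and the trimming threshold as powers of \(\delta\) and taking \(M\) large gives \(|\bigcup\mathcal F|\ge(\alpha-\delta^{1/4})|H|\). The disjointness constants accumulate to at most \(10\delta^{1/4}\). The role of \(D\) is only to absorb the boundary effects in the near-invariance estimate.

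\textbf{Main obstacle.} I expect the hardest part to be the bookkeeping that makes the errors not grow with \(M\). The near-invariance error \(\delta\) is paid at every stage, while \(M\) must be large compared with \(1/\eta\). The fix is to choose the \(\gamma\)-trimming threshold and the invariance threshold as suitable powers of \(\delta\), namely \(\delta^{1/2}\) and \(\delta^{1/4}\), and to use a Markov-inequality argument. Only a \(\delta^{1/2}\)-fraction of centers can have translates straddling \(\partial U_i\) by more than \(\delta^{1/2}\), which makes the total loss \(O(\delta^{1/4})\) independent of \(M\). If this bookkeeping becomes unwieldy, I would simply cite \cite[Corollary 2.7]{Lindenstrauss2001}, since the statement is taken verbatim from there.
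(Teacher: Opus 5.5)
The paper gives no proof of this lemma. It is imported verbatim from \cite[Corollary 2.7]{Lindenstrauss2001}, so your fallback of citing it is exactly the paper's treatment. Your outline also matches the architecture of Lindenstrauss's argument: sparse random selection inside each tempered scale, then a descent $i=M,M-1,\dots,1$ that places new translates off the already covered set.

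If you write the argument out yourself, however, the bookkeeping in your multi-scale step is wrong as stated, and the fix you propose does not address it. Your single-scale step gives a union of size $\ge c(C,\gamma)\,|A'|$. For the random selection you describe, $c$ is of order $\gamma/(1+C\gamma)$, because the selection density must be about $\gamma/|H_{i,k}|$ to get $\gamma$-disjointness. The hypothesis, however, only controls $|DA_{i,*}|$, not $|A_{i,*}|$. When you say the available centers number about $|DA_{i',*}|-|U_i|-O(\delta)|H|$, you are conflating $|A'|$ with $|DA'|$. Passing through $|DA'|\le |D|\,|A'|$, the per-stage rate is $\eta=c(C,\gamma)/|D|$, not $\eta(C)$; this is why $M$ depends on $D$. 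Consequently $M\delta\gtrsim |D|(1+C\gamma)\delta/\gamma$, which is unbounded as $C$ or $|D|$ grows. An additive error $O(M\delta)$ would therefore be fatal. Your Markov ``straddling'' count does not rescue this, since it is still a loss incurred at every stage.

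The missing observation is that nothing accumulates at all. At stage $i$, take as available only the centers $a\in A_{i,*}\setminus H_{i,*}^{-1}U_i$, so that the new translates are disjoint from $U_i$. Then the disjointness constant does not grow either. Since $U_i$ is a $\gamma$-disjoint union of translates $H_{i'',k}b$ with $i''>i$, the second half of condition (1) gives $|DH_{i,*}^{-1}U_i|\le \beta|U_i|$ with $\beta=(1+\delta)/(1-\gamma)$. Hence
\[
|D(A_{i,*}\setminus H_{i,*}^{-1}U_i)|\ge |DA_{i,*}|-|DH_{i,*}^{-1}U_i|\ge \alpha|H|-\beta|U_i|,
\]
and therefore
\[
\frac{\alpha}{\beta}|H|-|U_{i-1}|\le (1-\eta\beta)\Bigl(\frac{\alpha}{\beta}|H|-|U_i|\Bigr).
\]
So the invariance loss lowers the target only once, from $\alpha$ to $\alpha/\beta\ge\alpha(1-\gamma-\delta)$. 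After $M$ stages, $|\cup\mathcal F|\ge \frac{\alpha}{\beta}\bigl(1-(1-\eta\beta)^M\bigr)|H|$. Since $\cup\mathcal F\subset H$, this also forces $\alpha\le \beta/\bigl(1-(1-\eta\beta)^M\bigr)$. Choosing $\gamma=\delta^{1/2}$ and $M$ large in terms of $\delta, C, |D|$ then gives $|\cup\mathcal F|\ge(\alpha-\delta^{1/4})|H|$, with a $\delta^{1/2}$-disjoint (hence $10\delta^{1/4}$-disjoint) family. No Markov argument is needed.
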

	
	We also employ the following result from \cite[Lemma 5.3]{Dou-Wang-Zhang2025}, which is a variant of a combinatorial lemma due to Lindenstrauss \cite[Lemma 4.2]{Lindenstrauss2001}.

	\begin{lemma}\label{combilem}
		Assume that the F\o lner sequence $\{F_{n}\}_{n\in\mathbb{N}}$ satisfies \eqref{1.1}. Then, for any $\eta>0$, there exists $N=N(\eta)\in \mathbb{N}$ such that for all indices $N\leq n_{1}<\cdots<n_{r}$, the number of $\frac{1}{2}$-disjoint sub-collections of $\{F_{n_{i}}a:F_{n_{i}}a\subset F_{n},i=1,\cdots,r\}$ is at most $2^{\eta|F_{n}|}$ for every sufficiently large $n\in \mathbb{N}$ (depending on $n_{1},\cdots,n_{r}$).
	\end{lemma}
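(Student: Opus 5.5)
The plan is a direct counting argument with two ingredients: an upper bound on how many translates a $\frac{1}{2}$-disjoint subcollection can contain, and a weighted count of the possible index labels, where the weights are controlled by \eqref{1.1}. Fix $\eta>0$ and set $\lambda=\eta/4$. By \eqref{1.1}, choose $N$ so large that $|F_l|\ge \frac{2}{\lambda}\log l$ for all $l\ge N$. Enlarging $N$ if necessary, also arrange that $m:=\min_{l\ge N}|F_l|$ is so large that the binary entropy term $\mathrm{H}(2/m)$ is below $\eta/4$, where $\mathrm{H}(t)=-t\log t-(1-t)\log(1-t)$ (logarithms taken to base~$2$ throughout, to match $2^{\eta|F_n|}$). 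This $N$ will be the $N(\eta)$ of the statement.

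\emph{Step 1: bounding the size of a subcollection.} Let $\mathcal{F}$ be a $\frac{1}{2}$-disjoint subcollection of $\{F_{n_i}a: F_{n_i}a\subset F_n\}$. Each member $F_{n_i}a$ contains a set of size at least $\frac{1}{2}|F_{n_i}|$, and these sets are pairwise disjoint subsets of $F_n$. Summing their sizes gives
\[
\sum_{F_{n_i}a\in\mathcal{F}}|F_{n_i}|\le 2|F_n|,
\]
so in particular $|\mathcal{F}|\le 2|F_n|/m$.

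\emph{Step 2: encoding.} Fix $f_0\in F_{n_1}$. If $F_{n_i}a\subset F_n$, then $a\in f_0^{-1}F_n$ (here we use $f_0\in F_{n_1}$ and assume, after translating, that $f_0$ lies in every $F_{n_i}$; otherwise note $a\in F_{n_i}^{-1}F_n$ and pick a reference element of each $F_{n_i}$, which changes nothing below since the center set for each index still has at most $|F_n|$ elements). It is cleaner to encode $\mathcal{F}$ as a set of pairs $(i,a)$, that is, a set $S$ of at most $2|F_n|/m$ group elements drawn from a set of size at most $|F_n|$, together with an index labeling. Strictly, distinct members could share the same $a$ with different indices. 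I will handle this by encoding $\mathcal{F}$ as a finite multiset of pairs, and bound separately the number of possible center sets and the number of label sequences. The center set is a subset of size at most $2|F_n|/m$ of a set of size $|F_n|$, so there are at most $2^{\mathrm{H}(2/m)|F_n|}\le 2^{\eta|F_n|/4}$ choices, using the standard binomial estimate for $n$ large.

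\emph{Step 3: counting labels (the main step).} Given the list of members, count the label vectors $(l_1,\dots,l_s)$ with $l_j\ge N$ and $\sum_j|F_{l_j}|\le 2|F_n|$. Insert the factor $1\le 2^{\lambda(2|F_n|-\sum_j|F_{l_j}|)}$ under the sum. This bounds the count by
\[
2^{2\lambda|F_n|}\prod_{j=1}^{s}\sum_{l\ge N}2^{-\lambda|F_l|}\le 2^{2\lambda|F_n|}\Bigl(\sum_{l\ge N}l^{-2}\Bigr)^{s}\le 2^{\eta|F_n|/2}.
\]
The middle inequality is exactly where \eqref{1.1} enters, through $|F_l|\ge\frac{2}{\lambda}\log l$. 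The last inequality uses $\sum_{l\ge N}l^{-2}<1$ together with $2\lambda=\eta/2$.

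Multiplying the bounds from Steps 2 and 3 gives at most $2^{3\eta|F_n|/4}\le 2^{\eta|F_n|}$ subcollections for $n$ large. The step I expect to be the main obstacle is making the encoding rigorous, so that a subcollection is genuinely determined by the data counted. That means handling repeated centers with different indices, and fixing, for each index, a set of admissible centers of size at most $|F_n|$. The fallback is to encode each member as the pair $(l,a)$ with $a\in F_{l}^{-1}F_n$ ranging over a set of at most $|F_n|$ elements for each fixed $l$. Then run the weighted count of Step 3 jointly over pairs, bounding the number of choices for the $j$-th member by $|F_n|\cdot 2^{-\lambda|F_{l_j}|}$ summed over $l_j$, and absorb the resulting factor $|F_n|^{s}\le 2^{(2|F_n|/m)\log|F_n|}$. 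If this factor cannot be absorbed directly, use the ordered-subset (binomial) bound in place of the crude $|F_n|^{s}$.
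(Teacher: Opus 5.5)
The paper does not prove Lemma~\ref{combilem}. It imports it from Dou--Wang--Zhang (Lemma 5.3), which is a variant of Lindenstrauss's Lemma 4.2, so there is no in-paper argument to match and your proposal has to stand on its own. In substance it does, and the weighted count is a genuinely different kind of bookkeeping. In Step 3 you charge each member a weight $2^{-\lambda|F_l|}$, and \eqref{1.1} enters only through $\sum_{l\ge N}2^{-\lambda|F_l|}\le\sum_{l\ge N}l^{-2}<1$. What this buys is that the argument never sees $r$ or $n_1<\dots<n_r$. Once repaired as below, it gives $2^{\eta|F_n|}$ for every $n$ with $|F_n|$ above a threshold depending on $\eta$ alone. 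That is stronger than the quoted statement, which lets $n$ depend on $n_1,\dots,n_r$.

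Step 2 needs tightening in two places, both of which you partly anticipated.

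\emph{Common ambient set for centers.} Your count of center sets needs all admissible centers to lie in one set of size $|F_n|$. The per-index bounds $|\{a:F_{n_i}a\subset F_n\}|\le|F_n|$ in your ``otherwise'' branch do not give this. The union over $i$ can have about $r|F_n|$ elements, and choosing at most $2|F_n|/m$ points there costs roughly $2^{(2/m)\log_2(rm)|F_n|}$. That is not small uniformly in $r$, and taking $n$ larger does not help. The translation route is the right fix and is legitimate. Replacing $F_{n_i}$ by $F_{n_i}g_i^{-1}$ with $g_i\in F_{n_i}$ leaves the family $\{F_{n_i}a:F_{n_i}a\subset F_n\}$ unchanged as a family of sets and does not affect \eqref{1.1}. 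After that, every admissible center lies in $F_n$.

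\emph{Repeated centers.} As you observed, a center set plus a label vector does not determine $\mathcal F$ when one center carries several labels. Instead record the multiset of centers, in a fixed order with ties ordered by label. For $s\le 2|F_n|/m$ there are at most $\binom{|F_n|+s}{s}\le 2^{(1+2/m)\mathrm{H}(2/m)|F_n|}$ such multisets, and summing over $s$ costs only a factor $2|F_n|/m+1$.

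\emph{The fallback.} Your fallback also works, but only with the $1/s!$. The crude factor $|F_n|^s$ cannot be absorbed, since $(2/m)\log|F_n|\to\infty$. Instead, inject $\mathcal F$ into a set $S$ of pairs $(l,a)$ with $F_la\subset F_n$, and use weights $w_{(l,a)}=2^{-\lambda|F_l|}$. Since $\sum_p w_p\le|F_n|\sum_{l\ge N}2^{-\lambda|F_l|}<|F_n|$, you get
\[
\#\{\mathcal F:|\mathcal F|=s\}\le 2^{2\lambda|F_n|}\sum_{|S|=s}\prod_{p\in S}w_p\le 2^{2\lambda|F_n|}\frac{1}{s!}\Bigl(\sum_p w_p\Bigr)^s\le 2^{2\lambda|F_n|}\frac{|F_n|^s}{s!}.
\]
Moreover $\sum_{s\le 2|F_n|/m}|F_n|^s/s!\le(2|F_n|/m+1)(em/2)^{2|F_n|/m}=(2|F_n|/m+1)\,2^{c_m|F_n|}$ with $c_m=(2/m)\log_2(em/2)\to0$. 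This version needs neither the translation nor the multiplicity bookkeeping, and it is the cleanest way to write the proof.
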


	\subsection{Various entropies of subsets}\
	
	In this subsection, let us recall various entropies of subsets used later.
	
	Denote by $\mathfrak{C}_{X}^{o}$ the set of all finite open covers of $X$. For every $\varepsilon>0$ and $x\in X$, let $B_{\varepsilon}^{d}(x)$ and $O_{\varepsilon}^{d}(x)$,
	respectively, denote the closed and open ball with center $x$ and radius $\varepsilon$, which are also
	denoted by $B_{\varepsilon}(x)$ and $O_{\varepsilon}(x)$ respectively if there is no ambiguity. Furthermore, for any $F\in\mathfrak{F}_{G}$, let $d_{F}(x,y)=\max_{g\in F}d(gx,gy)$. We denote for simplicity $B_{\varepsilon}^{F}(x)$ and $O_{\varepsilon}^{F}(x)$, the closed and open ball of $X$ with center $x$ and radius $\varepsilon$ under the metric $d_{F}$.
	
	\subsubsection{Topological entropy of subsets}\
	
	For a nonempty subset \(E \subset X\), its \emph{topological entropy} $h_{\mathrm{top}}(E, \{F_n\}_{n\in \mathbb{N}})$  is defined as
	\[
	\begin{aligned}
		&h_{\mathrm{top}}(E, \{F_n\}_{n\in \mathbb{N}}) = \sup_{\mathcal{U} \in \mathfrak{C}_X^o} h_{\mathrm{top}}(E, \mathcal{U},\{F_n\}_{n\in \mathbb{N}}),
		\\&h_{\mathrm{top}}(E, \mathcal{U},\{F_n\}_{n\in \mathbb{N}})=\limsup_{n\to\infty} \frac{1}{|F_n|} \log N(\mathcal{U}^{F_n}, E),
	\end{aligned}
	\]
	where \(N(\mathcal{U}^{F_n}, E)\) is the minimal cardinality of a subcover of \(\mathcal{U}^{F_n} := \bigvee_{g \in F_n} g^{-1}\mathcal{U}\) that covers \(E\). An equivalent definition uses separated or spanning sets. For \(\xi > 0\), \(F \in \mathfrak{F}_G\), and \(K \subset X\),
	a set \(H \subset K\) is called \emph{\((F, \xi)\)-separated} if \(d_F(x,y) > \xi\) for all distinct \(x, y \in H\).
	Let \(s_F(K, \xi, d)\) denote the maximal cardinality of an \((F,\xi)\)-separated subset of \(K\), and set
	\begin{equation} \label{topo}
		h_{\text{top}}(K,\delta,\{F_{n}\}_{n\in\mathbb{N}},d) = \limsup_{n\to\infty} \frac{1}{|F_n|} \log s_{F_n}(K, \delta, d).
	\end{equation}
	Then
	\[h_{\mathrm{top}}(K, \{F_n\}_{n\in \mathbb{N}}) = \lim_{\delta \to 0} h_{\text{top}}(K,\delta,\{F_{n}\}_{n\in\mathbb{N}},d).\]
	Similarly, for \(\xi > 0\), \(F \in \mathfrak{F}_G\), and \(K \subset X\),
	a set \(E \subset X\) is called an \emph{\((F, \xi)\)-spanning set} for \(K\) if for every \(x \in K\) there exists \(y \in E\) such that \(d_F(x,y) \le \xi\).
	If let \(r_F(K, \xi, d)\) denote the minimal cardinality of an \((F,\xi)\)-spanning set for \(K\), then it is trivial to obtain
	\begin{equation} \label{se-sp}
		r_F(K, \xi, d) \leq s_F(K, \xi, d) \leq r_F(K, \frac{\xi}{2}, d)
	\end{equation}
	and hence
	\[
	h_{\mathrm{top}}(K, \{F_n\}_{n\in \mathbb{N}}) = \lim_{\delta \to 0} \limsup_{n\to\infty} \frac{1}{|F_n|} \log r_{F_n}(K, \delta, d).
	\]
	
	\begin{remark}
		Using the straightforward observation in \eqref{se-sp}, if in \eqref{topo} we instead adopt
		\[
		h_{\mathrm{top}}\bigl(K,\delta,\{F_n\}_{n\in\mathbb{N}},d\bigr)=\limsup_{n\to\infty}\frac{1}{|F_n|}\log r_{F_n}(K,\delta,d),
		\]
		then the main results of the paper continue to hold. Nevertheless, for the sake of simplicity, we will work exclusively with the definition given in
		\eqref{topo}.
	\end{remark}
	
	\subsubsection{Packing topological entropy of subsets}\
	
	Following \cite{Dou-Zheng-Zhou2023}, for \(\emptyset \neq Z \subset X\) and \(\lambda, \varepsilon, \delta > 0\), define
	
	\[
	P_{\lambda, \varepsilon}(\delta, Z, \{F_n\}_{n\in \mathbb{N}}, d) = \sup \sum_{i \in I} \exp(-\lambda |F_{n_i}|),
	\]
	where the supremum is taken over all finite or countable disjoint families \(\{B^{F_{n_i}}_\delta(x_i)\}_{i \in I}\) with \(x_i \in Z\) and \(|F_{n_i}| \ge \frac{1}{\varepsilon}\) for all \(i\in I\).
	Note that $G$ is assumed to be a countably infinite discrete amenable group and \(\{F_n\}_{n\in \mathbb{N}}\) is a F\o lner sequence,
	\(|F_{n}| \ge \frac{1}{\varepsilon}\) once $n\in \mathbb{N}$ is large enough. We also set $P_{\lambda, \varepsilon}(\delta, \emptyset, \{F_n\}_{n\in \mathbb{N}}, d) = 0$ by convention.
	It is easy to see that $P_{\lambda, \varepsilon}(\delta, Z, \{F_n\}_{n\in \mathbb{N}}, d)$ does not increase as $\varepsilon$ decreases to $0$. Let
	\[P_{\lambda}(\delta, Z, \{F_n\}_{n\in \mathbb{N}}, d) = \lim_{\varepsilon \to 0} P_{\lambda, \varepsilon}(\delta, Z, \{F_n\}_{n\in \mathbb{N}}, d).\]
	Further set
	\[
	\widetilde{P}_{\lambda}(\delta, Z, \{F_n\}_{n\in \mathbb{N}}, d) = \inf\left\{ \sum_{i=1}^\infty P_{\lambda}(\delta, Z_i, \{F_n\}_{n\in \mathbb{N}}, d): Z \subset \bigcup_{i=1}^\infty Z_i \right\}.
	\]
	In particular, $\widetilde{P}_{\lambda}(\delta, Z, \{F_n\}_{n\in \mathbb{N}}, d)\notin\{0,+\infty\}$ for at most one $\lambda>0$, so we may define
	\[
	\begin{aligned}
		h_{\mathrm{top}}^{P}(Z,\delta,\{F_{n}\}_{n\in\mathbb{N}},d)&=\inf\{\lambda>0:\widetilde{P}_{\lambda}(\delta,Z,\{F_{n}\}_{n\in\mathbb{N}},d)=0\}
		\\&=\sup\{\lambda>0:\widetilde{P}_{\lambda}(\delta,Z,\{F_{n}\}_{n\in\mathbb{N}},d)=+\infty\},
	\end{aligned}
	\]
	where we set $\inf \emptyset=+\infty$ by convention.
	In particular,
	$\widetilde{P}_{\lambda}(\delta,Z,\{F_{n}\}_{n\in\mathbb{N}},d)=0$ for all $\lambda> h_{\mathrm{top}}^{P}(Z,\delta,\{F_{n}\}_{n\in\mathbb{N}},d)$.
	It is not hard to see that $h_{\mathrm{top}}^{P}(Z,\delta,\{F_{n}\}_{n\in\mathbb{N}},d)$ increases when $\delta$ decreases to $0$.
	The \emph{packing topological entropy} of \(Z\) is defined as
	\[
	h^P_{\mathrm{top}}(Z, \{F_n\}_{n\in \mathbb{N}}) = \lim_{\delta \to 0} h^P_{\mathrm{top}}(Z, \delta, \{F_n\}_{n\in \mathbb{N}}, d) = \sup_{\delta > 0} h^P_{\mathrm{top}}(Z, \delta, \{F_n\}_{n\in \mathbb{N}}, d),
	\]
	which is independent of the metric $d$ though $h^P_{\mathrm{top}}(Z, \delta, \{F_n\}_{n\in \mathbb{N}}, d)$ does depend on $d$.
	
	\subsubsection{Bowen's dimensional entropy of subsets}\
	
	Now let us recall from \cite{Zheng-Chen2016} the definition of Bowen's dimensional entropy of subsets for amenable group actions. Let \(\mathcal{U} \in \mathfrak{C}_X^o\), \(\emptyset \neq Z \subset X\), and \(\lambda, \varepsilon > 0\). Define
	\[
	F_{\lambda, \varepsilon}(\mathcal{U}, Z, \{F_n\}_{n\in \mathbb{N}}) = \inf \sum_{i \in I} \exp(-\lambda |F_{n_i}|),
	\]
	where the infimum is taken over all finite or countable families \(\{B_i\}_{i \in I}\) covering \(Z\) with each \(B_i \in \mathcal{U}^{F_{n_i}}\) for some \(|F_{n_i}| \ge \frac{1}{\varepsilon}\).
	Set $F_{\lambda,\varepsilon}(\mathcal{U},\emptyset,\{F_{n}\}_{n\in\mathbb{N}})=0$ by convention, and put
	\[
	F_{\lambda}(\mathcal{U},Z,\{F_{n}\}_{n\in\mathbb{N}})=\lim_{\varepsilon\to 0}F_{\lambda,\varepsilon}(\mathcal{U},Z,\{F_{n}\}_{n\in\mathbb{N}})=\sup_{\varepsilon>0}F_{\lambda,\varepsilon}(\mathcal{U},Z,\{F_{n}\}_{n\in\mathbb{N}}).
	\]
	Similarly we may define
	\[
	h_{\mathrm{dim}}(Z,\mathcal{U},\{F_{n}\}_{n\in\mathbb{N}})=\inf\{\lambda>0:F_{\lambda}(\mathcal{U},Z,\{F_{n}\}_{n\in\mathbb{N}})=0\}.
	\]
	In particular,
	$F_{\lambda}(\mathcal{U},Z,\{F_{n}\}_{n\in\mathbb{N}})=0$ for all $\lambda> h_{\mathrm{dim}}(Z,\mathcal{U},\{F_{n}\}_{n\in\mathbb{N}})$.
	The \emph{Bowen's dimensional entropy} of \(Z\) is
	defined as
	\[
	h_{\mathrm{dim}}(Z, \{F_n\}_{n\in \mathbb{N}}) = \sup_{\mathcal{U} \in \mathfrak{C}_X^o} h_{\mathrm{dim}}(Z, \mathcal{U}, \{F_n\}_{n\in \mathbb{N}}).
	\]
	
	Similarly, a metric version can be given for Bowen's dimensional entropy. Let $\delta>0$. For any $\emptyset\neq Z\subset X$ and every $\lambda,\varepsilon>0$ set
	\[
	F_{\lambda,\varepsilon}(\delta,Z,\{F_{n}\}_{n\in\mathbb{N}},d)=\inf \sum_{i\in I}\exp(-\lambda|F_{n_{i}}|),
	\]
	where the infimum is taken over all finite or countable families $\{O_{\delta}^{F_{n_{i}}}(x_{i})\}_{i\in I}$ which cover $Z$ and $|F_{n_{i}}|\geq \frac{1}{\varepsilon}$  for each $i\in I$.
	Let
	\[F_{\lambda}(\delta, Z, \{F_n\}_{n\in\mathbb{N}}, d) = \lim_{\varepsilon \to 0} F_{\lambda, \varepsilon}(\delta, Z, \{F_n\}_{n\in\mathbb{N}}, d)
	\]
	and set \(F_{\lambda}(\delta, \emptyset, \{F_n\}_{n\in\mathbb{N}}, d) = 0\)  by convention. Define
	\[
	h^B_{\mathrm{top}}(Z, \delta, \{F_n\}_{n\in\mathbb{N}}, d) = \inf\{\lambda > 0: F_{\lambda}(\delta, Z, \{F_n\}_{n\in\mathbb{N}}, d) = 0\}.
	\]
	It can be shown by easy arguments that
	\[
	h_{\mathrm{dim}}(Z, \{F_n\}_{n\in \mathbb{N}}) = \lim_{\delta \to 0} h^B_{\mathrm{top}}(Z, \delta, \{F_n\}_{n\in\mathbb{N}}, d) = \sup_{\delta > 0} h^B_{\mathrm{top}}(Z, \delta, \{F_n\}_{n\in\mathbb{N}}, d),
	\]
	so the limit is also independent of the metric \(d\), providing an alternative definition.
	
	\subsection{Metric mean dimension of subsets}\
	
	Building on the entropy notions above, we define three types of upper metric mean dimension, natural analogues for amenable group actions of those in \cite{Yang-Chen-Zhou2022, Yang-Chen-Zhou2024}.
	
	For a nonempty subset \(Z \subset X\), define
	\[
	\begin{aligned}
		\overline{\mathrm{mdim}}_M(Z, \{F_n\}_{n\in \mathbb{N}}, d) &= \limsup_{\delta \to 0} \frac{h_{\mathrm{top}}(Z, \delta, \{F_n\}_{n\in \mathbb{N}}, d)}{\log \frac{1}{\delta}}, \\
		\overline{\mathrm{mdim}}^P_M(Z, \{F_n\}_{n\in \mathbb{N}}, d) &= \limsup_{\delta \to 0} \frac{h^P_{\mathrm{top}}(Z, \delta, \{F_n\}_{n\in \mathbb{N}}, d)}{\log \frac{1}{\delta}}, \\
		\overline{\mathrm{mdim}}^B_M(Z, \{F_n\}_{n\in \mathbb{N}}, d) &= \limsup_{\delta \to 0} \frac{h^B_{\mathrm{top}}(Z, \delta, \{F_n\}_{n\in \mathbb{N}}, d)}{\log \frac{1}{\delta}}.
	\end{aligned}
	\]
	We call them the \emph{upper metric mean dimension}, \emph{packing upper metric mean dimension}, and \emph{Bowen upper metric mean dimension} of \(Z\), respectively.\footnote{We can introduce similarly the \emph{lower metric mean dimension}, \emph{packing lower metric mean dimension}, and \emph{Bowen lower metric mean dimension} of \(Z\), respectively, when
		$\overline{\mathrm{mdim}}$ and $\limsup$
		are replaced everywhere by $\underline{\mathrm{mdim}}$ and $\liminf$, respectively.
	}
	
	Following \cite{Chen-Dou-Zheng2022}, we could provide an equivalent formulation of the upper metric mean dimension in terms of open covers.
	For each $\xi>0$, set
	\[
	\#(Z,\xi, d)=\min\{|\mathcal{U}|:\mathcal{U}\text{ is a finite open cover of $Z$ with }\mathrm{diam}(\mathcal{U},d)<\xi\},
	\]
	where $\mathrm{diam}(\mathcal{U},d)$ denotes the diameter of the family $\mathcal{U}$.
	It is direct to see that
	\begin{equation}\label{opencover}
		r_{F}(Z,\xi,d)\leq \#(Z,\xi, d_{F})\leq r_{F}(Z,\frac{\xi}{3},d)
	\end{equation}
	for any $F\in\mathfrak{F}_{G}$.
	Combining (\ref{se-sp}) and (\ref{opencover}) one has
	\[
	\overline{\mathrm{mdim}}_M(Z, \{F_n\}_{n\in \mathbb{N}}, d)=\limsup_{\delta \to 0}\frac{1}{\log \frac{1}{\delta}}\limsup_{n\to\infty}\frac{1}{|F_{n}|}\log \#(Z,\delta,d_{F_n}),
	\]
	\[
	\underline{\mathrm{mdim}}_M(Z, \{F_n\}_{n\in \mathbb{N}}, d)=\liminf_{\delta \to 0}\frac{1}{\log \frac{1}{\delta}}\limsup_{n\to\infty}\frac{1}{|F_{n}|}\log \#(Z,\delta,d_{F_n}).
	\]
	
	The definitions above appear to depend on the choice of the F\o lner sequence. However:
	
	\begin{prop} \label{rmk-independent}
		Both $\overline{\mathrm{mdim}}_M(X, \{F_n\}_{n\in \mathbb{N}}, d)$ and $\underline{\mathrm{mdim}}_M(X, \{F_n\}_{n\in \mathbb{N}}, d)$
		are independent of the choice of the F\o lner sequence $\{F_n\}_{n\in \mathbb{N}}$.
	\end{prop}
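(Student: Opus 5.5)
Our plan is to show that, for each fixed $\delta>0$, the quantity $\limsup_{n}\frac{1}{|F_n|}\log \#(X,\delta,d_{F_n})$ is a genuine limit that does not depend on the F\o lner sequence. Both metric mean dimensions are then obtained from this common number by dividing by $\log\frac1\delta$ and taking $\limsup_{\delta\to0}$ or $\liminf_{\delta\to0}$. The equivalent formulation via $\#(Z,\xi,d_F)$ recorded just before the statement is therefore the natural starting point.

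Fix $\delta>0$ and define $f(F)=\log \#(X,\delta,d_F)$ for $F\in\mathfrak{F}_G$; this is nonnegative, since $\#\ge1$. We verify the two hypotheses of the Ornstein--Weiss lemma (Lemma \ref{OW-Lemma}).

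\emph{Subadditivity.} Let $E,F\in\mathfrak{F}_G$. Take an open cover $\mathcal U$ of $X$ with $\mathrm{diam}(\mathcal U,d_E)<\delta$ and $|\mathcal U|=\#(X,\delta,d_E)$, and an open cover $\mathcal V$ with $\mathrm{diam}(\mathcal V,d_F)<\delta$ and $|\mathcal V|=\#(X,\delta,d_F)$. The join $\mathcal U\vee\mathcal V$ is an open cover of cardinality at most $|\mathcal U||\mathcal V|$. Its $d_{E\cup F}$-diameter is less than $\delta$, because $d_{E\cup F}=\max(d_E,d_F)$. Hence $f(E\cup F)\le f(E)+f(F)$.

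\emph{Invariance.} We need $f(Fg)=f(F)$. Since $d_{Fg}(x,y)=\max_{h\in F}d(hgx,hgy)=d_F(gx,gy)$, the map $g\colon (X,d_{Fg})\to(X,d_F)$ is an isometric homeomorphism. Consequently $\mathcal U\mapsto g^{-1}\mathcal U$ is a bijection between open covers of $X$ with $d_F$-diameter less than $\delta$ and open covers with $d_{Fg}$-diameter less than $\delta$. This gives $\#(X,\delta,d_{Fg})=\#(X,\delta,d_F)$.

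Lemma \ref{OW-Lemma} now shows that $\lim_n \frac{1}{|F_n|}\log\#(X,\delta,d_{F_n})$ exists and equals a number $h(\delta)$ independent of the F\o lner sequence. Substituting this into the displayed formulas for $\overline{\mathrm{mdim}}_M$ and $\underline{\mathrm{mdim}}_M$ (with $Z=X$) gives
$$\overline{\mathrm{mdim}}_M(X,\{F_n\}_{n\in\mathbb{N}},d)=\limsup_{\delta\to0}\frac{h(\delta)}{\log\frac1\delta},$$
and similarly with $\liminf$ for $\underline{\mathrm{mdim}}_M$. Neither expression involves $\{F_n\}_{n\in\mathbb{N}}$, which proves the proposition.

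There is no serious obstacle. The one point to check is the side convention: the Ornstein--Weiss lemma as stated uses right translates $Fg$, and our invariance argument was matched to exactly that convention. It is also worth noting that the separated-set counts $s_F$ are not obviously subadditive, so passing through $\#(\cdot)$ via \eqref{se-sp} and \eqref{opencover} is what makes the argument work.
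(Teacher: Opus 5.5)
Your proposal is correct and follows the paper's proof essentially verbatim. You show that $F\mapsto\log\#(X,\delta,d_F)$ is right-invariant by pulling optimal covers back along $g^{-1}$, which works because $g\colon(X,d_{Fg})\to(X,d_F)$ is an isometry, and subadditive via the join of optimal covers; you then apply the Ornstein--Weiss lemma (Lemma \ref{OW-Lemma}) and substitute the resulting limit into the $\#$-formulas for $\overline{\mathrm{mdim}}_M$ and $\underline{\mathrm{mdim}}_M$, exactly as the paper does.
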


	\begin{proof}
		Firstly we claim that for all \(\xi>0\) the following
		is an invariant subadditive function
		\[
		\mathfrak{F}_{G}\ni F\mapsto \log \#(X,\xi,d_{F}).
		\]
		Take arbitrarily \(F\in \mathfrak{F}_{G}\) and \(g\in G\). Let \(\mathcal U\in\mathfrak C_X^o\) satisfy
		\[
		|\mathcal U|=\#(X,\xi,d_F)
		\quad\text{and}\quad
		\mathrm{diam}(\mathcal U,d_F)<\xi.
		\]
		Then \(g^{-1}\mathcal U:=\{g^{-1}U:U\in\mathcal U\}\in\mathfrak C_X^o\) and
		\(\mathrm{diam}(g^{-1}U,d_{Fg})=\mathrm{diam}(U,d_F)<\xi
		\) for every \(g^{-1}U\in g^{-1}\mathcal U\).
		Hence
		\(
		\#(X,\xi,d_{Fg})\le |g^{-1}\mathcal U|=|\mathcal U|=\#(X,\xi,d_F).
		\)
		Applying this with \(F\) replaced by \(F g\) and $g$ replaced by \(g^{-1}\), we obtain the reverse inequality, and finally
		\(
		\#(X,\xi,d_{Fg})=\#(X,\xi,d_F).
		\)
		This proves the invariance of the function \(F\mapsto \log \#(X,\xi,d_F)\).

		Now take arbitrarily \(F_1,F_2\in\mathfrak F_G\). For each \(i=1,2\), we choose \(\mathcal U_i\in\mathfrak C_X^o\) such that
		\[
		|\mathcal U_i|=\#(X,\xi,d_{F_i})
		\quad\text{and}\quad
		\mathrm{diam}(\mathcal U_i,d_{F_i})<\xi.
		\]
		Then \(\mathcal U_1\vee\mathcal U_2\in\mathfrak C_X^o\) satisfies
		\(
		\mathrm{diam}(\mathcal U_1\vee\mathcal U_2,d_{F_1\cup F_2})<\xi.
		\)
		It follows that
		\[
		\#(X,\xi,d_{F_1\cup F_2})
		\le |\mathcal U_1\vee\mathcal U_2|
		\le |\mathcal U_1|\cdot\,|\mathcal U_2|
		= \#(X,\xi,d_{F_1})\cdot \#(X,\xi,d_{F_2}).
		\]
		This proves the subadditivity of the function \(F\mapsto \log {\#(X,\xi,d_F)}\).
		
		Now applying Lemma \ref{OW-Lemma} we conclude that for any $\xi>0$ the limit
		\[
		\lim_{n\to\infty}\frac{1}{|F_{n}|}\log \#(X,\xi,d_{F_n})
		\]
		exists and is independent of the selection of the F\o lner sequence $\{F_n\}_{n\in \mathbb{N}}$, from which the conclusion follows directly.
	\end{proof}

	Given a point \(x \in X\), its \emph{local upper metric mean dimension} is defined by
	\[
	\overline{\mathrm{mdim}}_M(x, \{F_n\}_{n\in \mathbb{N}}, d) = \inf \overline{\mathrm{mdim}}_M(K, \{F_n\}_{n\in \mathbb{N}}, d),
	\]
	where the infimum is taken over all closed neighborhoods \(K\) of \(x\). The local versions \(\overline{\mathrm{mdim}}^P_M(x, \{F_n\}_{n\in \mathbb{N}}, d)\) and \(\overline{\mathrm{mdim}}^B_M(x, \{F_n\}_{n\in \mathbb{N}}, d)\) are defined analogously.
	We remark that all of these quantities involved depend on the choice of the metric \(d\).

	We note that these local versions of upper metric mean dimension were first defined in \cite{Yang-Chen-Zhou2024} for a single transformation. The idea behind them can in fact be traced back further to the concept of an entropy point from a local viewpoint introduced in \cite{Ye-Zhang2007}.
	
	\smallskip
	
	The following captures a basic relationship among these three dimensions. The case of a single transformation is treated in \cite[Proposition 2.6]{Yang-Chen-Zhou2024}, while certain aspects for amenable group actions are covered in \cite{Dou-Zheng-Zhou2023}.
	We supply a proof here for completeness.\footnote{Proposition \ref{prop20} (1) and (3) remain valid if we consider those three lower metric mean dimensions.}
	
	\begin{prop}\label{prop20}
		Let \(Z_1, Z_2, \cdots, Z_m, Z \subset X\).
		\begin{enumerate}
			
			\item
			If \(Z_1 \subset Z_2\), then \(\overline{\mathrm{mdim}}_M(Z_1, \{F_n\}_{n\in \mathbb{N}}, d) \le \overline{\mathrm{mdim}}_M(Z_2, \{F_n\}_{n\in \mathbb{N}}, d)\). The same conclusion also holds for \(\overline{\mathrm{mdim}}^P_M\) and \(\overline{\mathrm{mdim}}^B_M\).
			
			\item
			If \(Z = \bigcup_{i=1}^m Z_i\), then \(\overline{\mathrm{mdim}}_M(Z, \{F_n\}_{n\in \mathbb{N}}, d) = \max_{1\le i \le m} \overline{\mathrm{mdim}}_M(Z_i, \{F_n\}_{n\in \mathbb{N}}, d)\). The same conclusion also holds for \(\overline{\mathrm{mdim}}^P_M\) and \(\overline{\mathrm{mdim}}^B_M\).\footnote{The same argument can not claim \(\underline{\mathrm{mdim}}_M(Z, \{F_n\}_{n\in \mathbb{N}}, d) = \max_{1\le i \le m} \underline{\mathrm{mdim}}_M(Z_i, \{F_n\}_{n\in \mathbb{N}}, d)\) (similarly for \(\underline{\mathrm{mdim}}^P_M\) and \(\underline{\mathrm{mdim}}^B_M\)). In fact, it may be trivial to have an example with $\liminf \max > \max \liminf$.}
			
			\item
			Let $\{F_{n}\}_{n\in\mathbb{N}}$ be a F\o lner sequence satisfying \eqref{1.1}, and let $\delta>0$. Then
			\begin{equation} \label{1:20}
				h^B_{\mathrm{top}}(Z, \delta, \{F_n\}_{n\in \mathbb{N}}, d) \le h^P_{\mathrm{top}}(Z, \frac{\delta}{2}, \{F_n\}_{n\in \mathbb{N}}, d) \le h_{\mathrm{top}}(Z, \frac{\delta}{2}, \{F_n\}_{n\in \mathbb{N}}, d),
			\end{equation}
			and then
			\[
			\overline{\mathrm{mdim}}^B_M(Z, \{F_n\}_{n\in \mathbb{N}}, d) \le \overline{\mathrm{mdim}}^P_M(Z, \{F_n\}_{n\in \mathbb{N}}, d) \le \overline{\mathrm{mdim}}_M(Z, \{F_n\}_{n\in \mathbb{N}}, d).
			\]
			Consequently, for each $x\in X$ one has
			\[\overline{\mathrm{mdim}}^B_M(x, \{F_n\}_{n\in \mathbb{N}}, d) \le \overline{\mathrm{mdim}}^P_M(x, \{F_n\}_{n\in \mathbb{N}}, d) \le \overline{\mathrm{mdim}}_M(x, \{F_n\}_{n\in \mathbb{N}}, d).
			\]
		\end{enumerate}
	\end{prop}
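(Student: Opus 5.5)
Parts (1) and (2) are elementwise statements about the fixed-scale entropies, with the $\limsup$ over $\delta$ applied afterwards; part (3) is a chain of inequalities at each scale $\delta$, which are then divided by $\log\frac1\delta$ and passed to the limit. The local statements follow by taking infima over closed neighborhoods $K$ of $x$, since the inequalities hold for every $K$. We treat the three parts in order.

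\emph{Part (1).} If $Z_1\subset Z_2$, then every $(F_n,\delta)$-separated subset of $Z_1$ is also one of $Z_2$. Hence $s_{F_n}(Z_1,\delta,d)\le s_{F_n}(Z_2,\delta,d)$, and so the fixed-scale topological entropy of $Z_1$ is at most that of $Z_2$. Similarly, every admissible disjoint family $\{B^{F_{n_i}}_\delta(x_i)\}$ with centers in $Z_1$ is admissible for $Z_2$. Also, every countable cover $\{Z_2^{(k)}\}$ of $Z_2$ covers $Z_1$, so $P_\lambda$ and then $\widetilde P_\lambda$ are monotone. For the Bowen quantity, any cover of $Z_2$ by Bowen balls covers $Z_1$, so $F_{\lambda,\varepsilon}$ and $F_\lambda$ are monotone. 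Monotonicity of the critical values then gives the claim.

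\emph{Part (2).} The bound "$\ge$" follows from (1), so we prove "$\le$". For $h_{\mathrm{top}}$ we use $s_{F_n}(Z,\delta,d)\le \sum_i s_{F_n}(Z_i,\delta,d)\le m\max_i s_{F_n}(Z_i,\delta,d)$. Hence $h_{\mathrm{top}}(Z,\delta)\le\max_i h_{\mathrm{top}}(Z_i,\delta)$, because $\log m/|F_n|\to0$ and the $\limsup$ of a maximum of finitely many sequences is the maximum of their $\limsup$s. Then
\[
\limsup_{\delta\to0}\max_i \frac{a_i(\delta)}{\log\frac1\delta}=\max_i\limsup_{\delta\to0}\frac{a_i(\delta)}{\log\frac1\delta},
\]
since there are finitely many indices. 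For the packing entropy, countable subadditivity of $\widetilde P_\lambda$ is immediate from its definition as an infimum over countable covers. So if $\lambda>\max_i h^P_{\mathrm{top}}(Z_i,\delta)$, then $\widetilde P_\lambda(\delta,Z)\le\sum_i\widetilde P_\lambda(\delta,Z_i)=0$. For the Bowen entropy, $F_{\lambda,\varepsilon}(\delta,Z)\le\sum_iF_{\lambda,\varepsilon}(\delta,Z_i)$ by concatenating covers, and the same argument applies.

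\emph{Part (3).} This is the main step.

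\emph{Second inequality,} $h^P\le h_{\mathrm{top}}$ at scale $\delta/2$. Fix $\lambda>s>h_{\mathrm{top}}(Z,\delta/2)$; it suffices to show $P_\lambda(\delta/2,Z)<\infty$, since then $\widetilde P_\lambda<\infty$ and hence $h^P\le\lambda$. Take a disjoint family $\{B^{F_{n_i}}_{\delta/2}(x_i)\}$ with $|F_{n_i}|\ge1/\varepsilon$. Group the centers by the value of $n_i$. For a fixed $n$, the centers with $n_i=n$ are pairwise at $d_{F_n}$-distance $>\delta/2$, since the closed balls are disjoint. So they form an $(F_n,\delta/2)$-separated set, and their number is at most $s_{F_n}(Z,\delta/2)\le e^{s|F_n|}$ for $n$ large. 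The sum is therefore bounded by $\sum_{n}e^{-(\lambda-s)|F_n|}$ plus finitely many terms. This series converges by \eqref{1.1}: eventually $|F_n|\ge c\log n$ for any $c$, so with $c(\lambda-s)>1$ the terms are at most $n^{-2}$. Finitely many $n$ with $|F_n|$ small are handled by shrinking $\varepsilon$. Repeated values of $|F_n|$ across different $n$ are harmless because we sum over $n$.

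\emph{First inequality,} $h^B(\delta)\le h^P(\delta/2)$. Take $\lambda>h^P(Z,\delta/2)$ and a cover $Z\subset\bigcup Z_k$ with $\sum_kP_\lambda(\delta/2,Z_k)<\infty$. It suffices to show $F_\lambda(\delta,Z_k)\le P_\lambda(\delta/2,Z_k)$, because $F_\lambda$ is countably subadditive; this gives $F_{\lambda'}(\delta,Z)=0$ for $\lambda'>\lambda$. To prove that inequality, fix $\varepsilon$ and run a Vitali-type greedy selection. Choose points $x_i\in Z_k$ and indices with $|F_{n_i}|\ge1/\varepsilon$ such that the closed balls $B^{F_{n_i}}_{\delta/2}(x_i)$ are pairwise disjoint and maximal in the following sense: every $y\in Z_k$ has a ball $B^{F_{n_i}}_{\delta/2}(x_i)$ with $n_i\le$ the smallest admissible index, meeting $B^{F_{n}}_{\delta/2}(y)$ for that index $n$. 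The triangle inequality then puts $y$ within $d_{F_{n_i}}$-distance $<\delta$ of $x_i$; strictness needs care, and we can use slightly smaller radii and monotonicity in the scale. The cleanest route is to take, for every point, the \emph{same} minimal admissible index $N$. A maximal $(F_N,\delta)$-separated subset of $Z_k$ then yields both a packing by closed $\delta/2$-balls and a cover by open $\delta$-balls (open up to a boundary adjustment). Thus $F_{\lambda,\varepsilon}(\delta,Z_k)\le P_{\lambda,\varepsilon}(\delta/2,Z_k)$, and letting $\varepsilon\to0$ gives the claim. The only technical obstacle is the closed/open radius mismatch. We handle it by using separation with $>$ and the fact that a maximal separated set gives $d_{F_N}\le\delta/2\cdot2$; if necessary we shrink $\delta$ slightly and use monotonicity. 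Countability of the maximal family follows from compactness, since it is in fact finite.
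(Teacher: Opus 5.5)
Parts (1) and (2) are sound and follow the paper. So is your reduction of the first inequality in (3) to $F_\lambda(\delta,Z_k)\le P_\lambda(\frac{\delta}{2},Z_k)$ via countable subadditivity of $F_\lambda$. Your proof of $h^P_{\mathrm{top}}(Z,\frac{\delta}{2})\le h_{\mathrm{top}}(Z,\frac{\delta}{2})$ is also sound: it bounds $P_\lambda$ directly for $\lambda>h_{\mathrm{top}}$, which is the contrapositive of the paper's extraction of one large $(F_m,\frac{\delta}{2})$-separated set.

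The gap is the step you flag yourself. A maximal $(F_N,\delta)$-separated set $H\subset Z_k$ only gives $d_{F_N}(z,H)\le\delta$, so the open balls $O^{F_N}_\delta(x)$, $x\in H$, need not cover $Z_k$. Your patch runs the wrong way. A maximal $(F_N,\delta')$-separated set with $\delta'<\delta$ gives $h^B_{\mathrm{top}}(Z,\delta)\le h^P_{\mathrm{top}}(Z,\frac{\delta'}{2})$. Since $h^P_{\mathrm{top}}(Z,\cdot)$ increases as the scale decreases, this is weaker than the claim, and there is no way back to $\frac{\delta}{2}$. Enlarging the Bowen radius fails for the same reason. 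The paper's proof takes the same step without comment, asserting $F_{\lambda,\varepsilon}(\delta,Z)\le|H|\exp(-\lambda|F_m|)$ for a maximal $(F_m,\delta)$-separated $H$.

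In fact no patch exists: the first inequality in \eqref{1:20} is false as stated. Here is a counterexample.
\begin{itemize}
\item Fix $\delta>0$. Let $S=\{z_1,w_{12},z_2,w_{23},z_3,w_{13}\}$ be six equally spaced points, in this cyclic order, on a circle of length $3\delta$ with arc-length metric $\rho$. Then $\rho(z_i,z_j)=\delta$, $\rho(z_i,w_{ij})=\rho(z_j,w_{ij})=\frac{\delta}{2}$, and $\rho(z_l,w_{ij})=\frac{3\delta}{2}$ for $l\notin\{i,j\}$.
\item Let the shift act on $X=S^{\mathbb{Z}}$ with $F_n=\{0,\ldots,n-1\}$. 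Put $d(x,y)=\max\{\rho(x_0,y_0),\sup\{\delta 2^{-|n|}: n\neq 0,\ x_n\neq y_n\}\}$, a compatible metric whose second term is at most $\frac{\delta}{2}$. Let $Z=\{z_1,z_2,z_3\}^{\mathbb{Z}}$.
\item Packing side: $B^{F_n}_{\delta/2}(x)=\{y:\rho(x_k,y_k)\le\frac{\delta}{2},\ 0\le k<n\}$. Any two such balls centered in $Z$ intersect: on commonly constrained coordinates put $y_k=w_{ij}$ wherever the centers read $z_i\neq z_j$. So admissible disjoint families are singletons, $P_{\lambda,\varepsilon}(\frac{\delta}{2},Z)\le e^{-\lambda/\varepsilon}$, and $h^P_{\mathrm{top}}(Z,\frac{\delta}{2})=0$.
\item Bowen side: $O^{F_n}_\delta(c)\cap Z$ allows at most two of the three symbols at each $k<n$. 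Hence it has uniform Bernoulli measure at most $(\frac{2}{3})^n$. Every admissible cover of $Z$ therefore satisfies $\sum_i(\frac{2}{3})^{n_i}\ge 1$, so $h^B_{\mathrm{top}}(Z,\delta)\ge\log\frac{3}{2}>0$.
\end{itemize}

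What your argument (and the paper's) does prove is $h^B_{\mathrm{top}}(Z,\delta)\le h^P_{\mathrm{top}}(Z,\delta')$ for every $\delta'<\frac{\delta}{2}$, for example $\delta'=\frac{\delta}{4}$. Since $\log\frac{1}{\delta}\sim\log\frac{4}{\delta}$, this still gives $\overline{\mathrm{mdim}}^B_M\le\overline{\mathrm{mdim}}^P_M$ and its local version. That is all the later arguments use, so state (3) at fixed scales in this shifted form.
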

	
	\begin{proof}
		(1) comes directly from the definition.
		
		(2) It is direct to obtain
		\(h_{\mathrm{top}}(Z, \delta, \{F_n\}_{n\in \mathbb{N}}, d) = \max_{1\le i \le m} h_{\mathrm{top}}(Z_i, \delta, \{F_n\}_{n\in \mathbb{N}}, d)\)
		from the definition and then
		the required identity for \(\overline{\mathrm{mdim}}_M\).
		
		The identity \(h_{\mathrm{top}}^P (Z, \delta, \{F_n\}_{n\in \mathbb{N}}, d) = \max_{1\le i \le m} h_{\mathrm{top}}^P (Z_i, \delta, \{F_n\}_{n\in \mathbb{N}}, d)\) was proved as \cite[Proposition 2.1 (2)]{Dou-Zheng-Zhou2023}, and then  the required identity for \(\overline{\mathrm{mdim}}_M^P\) follows readily.

		Now let us prove the required identity for \(\overline{\mathrm{mdim}}_M^B\).
		It suffices to prove the inequality
		\begin{equation}
			\label{12}
			\overline{\mathrm{mdim}}_{M}^{B}(Z,\{F_{n}\}_{n\in\mathbb{N}},d)\leq  \max_{1\leq i\leq m}\overline{\mathrm{mdim}}_{M}^{B}(Z_{i},\{F_{n}\}_{n\in\mathbb{N}},d).
		\end{equation}
		By definition, it is not hard to notice that, for arbitrarily given $\delta>0$ and $\lambda>0$, one has
		\[
		F_{\lambda}(\delta,Z,\{F_{n}\}_{n\in\mathbb{N}},d)\leq \sum_{i=1}^{m}F_{\lambda}(\delta,Z_{i},\{F_{n}\}_{n\in\mathbb{N}},d)
		\]
		and then
		\[
		h_{\text{top}}^{B}(Z,\delta,\{F_{n}\}_{n\in\mathbb{N}},d)\leq \max_{1\leq i\leq m} h_{\text{top}}^{B}(Z_{i},\delta,\{F_{n}\}_{n\in\mathbb{N}},d),
		\]
		from which the inequality \eqref{12} follows readily.
		
		(3) It suffices to prove the inequalities \eqref{1:20}, which were essentially proved in \cite[Propositions 2.2 and 2.4]{Dou-Zheng-Zhou2023}.
		Let us fix arbitrarily given $\delta> 0$.
		
		Let $\varepsilon > 0$ and $\lambda>
		0$. Fix any $F_{m}$ with $|F_{m}|\geq \frac{1}{\varepsilon}$ (note that the group $G$ has assumed to be a countably
		infinite discrete amenable group, $|F_{m}|\geq \frac{1}{\varepsilon}$ once $m\in \mathbb{N}$ is large enough). Let $H\subset Z$ be an $(F_{m},\delta)$-separated set (with maximal cardinality). Then, $H$ is also an $(F_{m},\delta)$-spanning set for $Z$ and
		\(\{B^{F_{m}}_{\frac{\delta}{2}} (x)\}_{x\in H}\) are pairwise disjoint, and so
		\[
		F_{\lambda,\varepsilon}(\delta,Z,\{F_{n}\}_{n\in\mathbb{N}},d)\leq |H|\exp(-\lambda|F_{m}|) \leq P_{\lambda,\varepsilon}(\frac{\delta}{2},Z,\{F_{n}\}_{n\in\mathbb{N}},d).
		\]
		Thus for any $\{Z_{i}:i\in\mathbb{N}\}$ with $Z\subset \bigcup_{i=1}^{\infty}Z_{i}$ we have
		\[
		F_{\lambda,\varepsilon}(\delta,Z,\{F_{n}\}_{n\in\mathbb{N}},d)\leq \sum_{i=1}^{\infty} F_{\lambda,\varepsilon}(\delta,Z_{i},\{F_{n}\}_{n\in\mathbb{N}},d)\leq \sum_{i=1}^{\infty} P_{\lambda,\varepsilon}(\frac{\delta}{2},Z_{i},\{F_{n}\}_{n\in\mathbb{N}},d),
		\]
		which implies
		\[
		F_{\lambda}(\delta,Z,\{F_{n}\}_{n\in\mathbb{N}},d)\leq \widetilde{P}_{\lambda}(\frac{\delta}{2},Z,\{F_{n}\}_{n\in\mathbb{N}},d)
		\]
		and hence
		$h_{\text{top}}^{B}(Z,\delta,\{F_{n}\}_{n\in\mathbb{N}},d)\leq h_{\text{top}}^{P}(Z,\frac{\delta}{2},\{F_{n}\}_{n\in\mathbb{N}},d)$.	
		
		\smallskip
		
		Now let us prove the remainder inequality in \eqref{1:20}.
		Without loss of generality we may assume $h_{\text{top}}^{P}(Z,\frac{\delta}{2},\{F_{n}\}_{n\in\mathbb{N}},d) > 0$, and take $0< t<h_{\text{top}}^{P}(Z,\frac{\delta}{2},\{F_{n}\}_{n\in\mathbb{N}},d)$. Then
		\[P_{t}(\frac{\delta}{2},Z,\{F_{n}\}_{n\in\mathbb{N}},d)\ge \widetilde{P}_{t}(\frac{\delta}{2},Z,\{F_{n}\}_{n\in\mathbb{N}},d)=\infty.\]
		Thus, for each $\varepsilon> 0$, there exists a disjoint countable family $\{B_{\frac{\delta}{2}}^{F_{n_{i}}}(x_{i})\}_{i\in I}$, with $x_{i}\in Z$ and $|F_{n_{i}}|\geq \frac{1}{\varepsilon}$ for each $i\in I$, such that
		$\sum_{i\in I}\exp(-t|F_{n_{i}}|)>\sum_{m\geq 1}\frac{1}{m^{2}}$, and then
		\[
		\sum_{i\in I}\exp(-t|F_{n_{i}}|)=\sum_{m=1}^{\infty}\sum_{\begin{subarray}{c}
				i\in I
				\\F_{n_{i}}=F_{m}
		\end{subarray}}\exp(-t|F_{m}|)>\sum_{m=1}^{\infty}\frac{1}{m^{2}};
		\]
		in particular, there exists some integer $m = m_\varepsilon \in \mathbb{N}$ such that
		\[
		|F_{m}|\geq \frac{1}{\varepsilon}\ \text{and}\ \sum_{\begin{subarray}{c}
				i\in I
				\\F_{n_{i}}=F_{m}
		\end{subarray}}\exp(-t|F_{m}|)>\frac{1}{m^{2}},
		\]
		and so, by noting that $\{x_{i}:i\in I,\ F_{n_{i}}=F_{m}\}\subset Z$ is $(F_{m},\frac{\delta}{2})$-separated set, one has
		\[s_{F_{m}}(Z,\frac{\delta}{2},d)\geq \frac{1}{m^{2}}\cdot \exp(t|F_{m}|).\]
		We remark that, when $\varepsilon$ tends to $0$, $|F_m|$ tends to $\infty$ and then $m$ tends to $\infty$. This implies
		\begin{eqnarray*}
			h_{\text{top}}(Z,\frac{\delta}{2},\{F_{n}\}_{n\in\mathbb{N}},d) & = & \limsup_{n\to\infty} \frac{1}{|F_n|} \log s_{F_n}(Z, \frac{\delta}{2}, d) \\
			& \ge & \limsup_{\varepsilon\rightarrow 0} \frac{1}{|F_{m_\varepsilon}|} \log s_{F_{m_\varepsilon}}(Z, \frac{\delta}{2}, d) \\
			& \ge & \limsup_{\varepsilon\rightarrow 0} \frac{t |F_{m_\varepsilon}| - 2 \log m_\varepsilon}{|F_{m_\varepsilon}|} = t\ (\text{using \eqref{1.1}}).
		\end{eqnarray*}
		Then the required inequality
		\(
		h_{\text{top}}(Z,\frac{\delta}{2},\{F_{n}\}_{n\in\mathbb{N}},d)\geq h_{\text{top}}^{P}(Z,\frac{\delta}{2},\{F_{n}\}_{n\in\mathbb{N}})
		\)
		comes from the arbitrariness of $0< t<h_{\text{top}}^{P}(Z,\frac{\delta}{2},\{F_{n}\}_{n\in\mathbb{N}},d)$, finishing the proof of (3).
	\end{proof}

	\section{Proofs of Theorems \ref{thm5} and \ref{thm6}} \label{main}
	
	This section is devoted to the proofs of Theorems \ref{thm5} and \ref{thm6}.
	
	We begin with a key technical proposition that provides a uniform estimate on covering numbers. Its statement is inspired by \cite[Proposition 2.2]{R.Bowen1972-TAMS} and \cite[Proposition 4.3]{Dou-Wang-Zhang2025}, while its proof is also inspired by the proof of \cite[Proposition 2.6]{Dou-Wang-Zhang2025}.
	
	\begin{prop}\label{prop14}
		Let $\{F_{n}\}_{n\in\mathbb{N}}$ be a tempered F\o lner sequence
			with constant $C$, satisfying the growth condition \eqref{1.1} and $e_G\in F_n$ for every $n\in\mathbb N$. Let
			\(\mathcal{U} \in \mathfrak{C}_X^o\) and \(\eta > 0\). Assume that $\lambda\in \mathbb{R}$ satisfies
		\[
		\sup_{x \in X} h_{\mathrm{dim}}(\Gamma_\eta(x), \mathcal{U}, \{F_n\}_{n\in \mathbb{N}}) < \lambda,
		\]
		then, for each sufficiently small \(\delta > 0\), there exists \(P \in \mathbb{N}\) such that, for all \(n > P\),
		\[
		\sup_{x \in X} N(\mathcal{U}^{F_n}, B_\eta^{F_n}(x)) \le |\mathcal{U}|^{(\delta + \delta^{\frac{1}{4}})|F_n|} \mathrm{e}^{\lambda |F_n|} 2^{\delta |F_n|}.
		\]
	\end{prop}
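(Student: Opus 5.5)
Following Bowen's proof of \cite[Proposition 2.2]{R.Bowen1972-TAMS}, with tilings replaced by Lemma \ref{coveringlem} and the counting of Lindenstrauss-type coverings done by Lemma \ref{combilem}, we proceed in three stages: a local finite cover of each stable set, a uniform compactness step, and a covering-plus-counting argument.

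\emph{Step 1 (local finite covers).} Fix $x\in X$. Since $h_{\mathrm{dim}}(\Gamma_\eta(x),\mathcal U,\{F_n\})<\lambda$, we have $F_\lambda(\mathcal U,\Gamma_\eta(x),\{F_n\})=0$. So for every threshold $L$ there is a cover of $\Gamma_\eta(x)$ by sets $B_i\in\mathcal U^{F_{n_i}}$ with $|F_{n_i}|\ge L$ and $\sum_i e^{-\lambda|F_{n_i}|}<1$. Because $\Gamma_\eta(x)$ is compact and the $B_i$ are open, finitely many $B_i$ suffice. Since $\Gamma_\eta(x)=\bigcap_{m}B^{F_m}_\eta(x)$ is a decreasing intersection of compact sets (we may use $e_G\in F_n$ and pass to $\bigcup_{k\le m}F_k$), the finite union $W_x=\bigcup_i B_i$ contains $B^{E_x}_\eta(x)$ for some finite $E_x\subset G$. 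Moreover, for $y$ near $x$ the ball $B^{E_x}_\eta(y)$ stays inside $W_x$. The main difficulty is that this is not uniform in $x$; we handle it by upper semicontinuity: $y\mapsto B^{E}_\eta(y)$ is upper semicontinuous in the Hausdorff sense. Compactness of $X$ then gives finitely many open sets $V_1,\dots,V_q$ covering $X$, one finite set $D\supset\bigcup_k E_{x_k}$, and for each $y\in V_k$ a finite family $\mathcal B_k\subset\bigcup_{L\le n\le L'}\mathcal U^{F_n}$. Each such family covers $B^{D}_\eta(y)$ and satisfies $\sum_{B\in\mathcal B_k}e^{-\lambda|F_{n(B)}|}<1$, with the $F_n$ used lying in a finite window of indices $[N,N']$. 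Here $N\ge N(\delta)$ is taken from Lemma \ref{combilem}, and $L$ is large enough to make the boundary errors small.

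\emph{Step 2 (iterated refinement along an orbit).} Fix $x\in X$ and $n$ large, and consider $z\in B^{F_n}_\eta(x)$. For each $g\in F_n$ with $Dg\subset F_n$, the point $gz$ lies in $B^{D}_\eta(gx)$, and $gx\in V_{k(g)}$. Hence $gz$ belongs to some $B\in\mathcal B_{k(g)}$, which means $z\in g^{-1}B\in\mathcal U^{F_{n(B)}g}$. This produces, at each center $g$, the translates $F_{n(B)}g$, with the choices determined by the point $x$. We now apply Lemma \ref{coveringlem}, with $H=F_n$, the families $H_{i,j}$ running over $F_N,\dots,F_{N'}$ (grouped into $M$ levels with rapidly growing index windows so that condition (1) holds, using temperedness with constant $C$), and $A_{i,j}$ the admissible centers. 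For each $z$ this yields a $10\delta^{1/4}$-disjoint subfamily $\mathcal F_z$ covering at least $(1-\delta^{1/4})|F_n|$ of $F_n$, once $n$ is so large that $F_n$ is $(D\cup\bigcup_{k\le N'}F_k)$-invariant up to $\delta$.

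\emph{Step 3 (counting).} The cover of $B^{F_n}_\eta(x)$ is indexed by three pieces of data. First, the shape pattern $\{F_{n(B)}g\}$; it is $\tfrac12$-disjoint, so by Lemma \ref{combilem} there are at most $2^{\delta|F_n|}$ such patterns. Second, given the pattern, the choice of $B$ in each tile, whose total weight is $\prod e^{\lambda|F_{n(B)}|}$ summed appropriately. Since $\sum e^{-\lambda|F|}<1$ and the tiles are almost disjoint, this contributes at most $e^{\lambda|F_n|}$ (up to the $\delta$-overlap, absorbed by enlarging $\lambda$ slightly). Third, an arbitrary element of $\mathcal U$ for each $g$ in the uncovered part of $F_n$ or the overlap part, which has size at most $(\delta+\delta^{1/4})|F_n|$; this gives the factor $|\mathcal U|^{(\delta+\delta^{1/4})|F_n|}$. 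Multiplying the three bounds gives the stated estimate, uniformly in $x$, for all $n>P$.

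We expect the main obstacle to be Step 1 together with its interface with Lemma \ref{coveringlem}. Extracting uniform $D$ and finite index windows from the merely pointwise hypothesis, while keeping the multilevel structure required by condition (1) of the lemma, must be done via semicontinuity and compactness as described above.
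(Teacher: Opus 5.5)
There is a genuine gap in how you feed Lemma \ref{coveringlem}. You build a \emph{single} covering datum: one threshold $L$, one compactness argument, one family $\mathcal B_k$ per $V_k$. You then propose to sort the shapes $F_N,\dots,F_{N'}$ occurring in it into $M$ levels. That cannot work. The lemma's conclusion $|\cup\mathcal F|\ge(\alpha-\delta^{1/4})|H|$ has $\alpha=\min_{1\le i\le M}|DA_{i,*}|/|H|$, a minimum over levels. So \emph{each} level must on its own serve almost every center: for almost every $g\in F_n$ and for every $i$, the point $gz$ must lie in a cover element whose shape belongs to level $i$. With one family, the elements of $\mathcal B_{k(g)}$ containing $gz$ need not have shapes in every level; typically only one level is hit. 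Hence some $A_{i,*}$ may be tiny or empty, $\alpha$ may be near $0$, and your claim that $\mathcal F_z$ covers $(1-\delta^{1/4})|F_n|$ is unsupported. Nor can you impose the second inequality of condition (1) (level-$i$ shapes almost invariant under $\bigcup_{i'<i}DH_{i',*}^{-1}$) by regrouping afterwards. Which shapes occur is dictated by the hypothesis, not chosen by you, and they may be consecutive members of the sequence with no room for a gap.

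The missing idea, which is what the paper does, is to repeat your Step 1 $M$ separate times. Once levels $1,\dots,i-1$ are fixed (finitely many shapes), choose a level-$i$ threshold $\varepsilon_i$ so small that every $F_m$ with $|F_m|\ge 1/\varepsilon_i$ is larger than all earlier shapes and $(1+\delta)$-invariant under $\bigcup_{i'<i}H_{i',*}^{-1}$. Then invoke $F_{\lambda^*}(\mathcal U,\Gamma_\eta(x))=0$ and compactness afresh to obtain level-$i$ data $V_{i,q}$, $S_i$ and covers. Now for each $z\in B^{F_n}_\eta(x)$ and each $i$, every $g\in F_n^*$ lies in $A_{i,*}(z)$, so $\alpha\ge 1-\delta$.

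Two smaller points also need repair. First, the covers must be built with some $\lambda^*<\lambda_\delta:=(1-10\delta^{1/4})\lambda$, not with $\lambda$. The overlap in a $10\delta^{1/4}$-disjoint family inflates $\sum_{F\in\mathcal F}|F|$ by up to $(1-10\delta^{1/4})^{-1}$, so ``enlarging $\lambda$'' leaves a bound worse than $e^{\lambda|F_n|}$. The strict hypothesis and small $\delta$ supply the needed room. Second, a tile $F\in\mathcal F$ may equal $F_Ra$ for several centers $a$, and the center determines which cover is used. Points with the same pattern therefore need not share cover data on $F$. The paper notes that these centers lie in $F$ (since $e_G\in F_R$) and absorbs the factor $|F|$ via $|F|\le e^{(\lambda_\delta-\lambda^*)|F|}$ for all shapes of sufficiently large index. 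Finally, $\bigcap_m B^{F_m}_\eta(x)$ equals $\Gamma^{\{F_n\}_{n\in\mathbb N}}_\eta(x)$, which may be strictly larger than $\Gamma_\eta(x)$; use an exhaustion of $G$ by finite sets instead.
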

	
	\begin{proof}
	Pick $\lambda^{*}\in \mathbb{R}$ and let $\delta\in (0,20^{-4})$ be small enough such that
			\begin{equation} \label{cstr}
				\lambda_{\delta}:=(1-10\delta^{\frac{1}{4}})\lambda>\lambda^{*}>\sup_{x\in X}h_{\mathrm{dim}}(\Gamma_{\eta}(x),\mathcal{U},\{F_{n}\}_{n\in\mathbb{N}}).
		\end{equation}	
		
		We take $M\in \mathbb{N}$ large enough (depending on $\delta$, $C$ and $D=\{e_{G}\}$) by Lemma \ref{coveringlem}, we also take $N\in \mathbb{N}$ by Lemma \ref{combilem} according to $\delta$. Since \(G\) is infinite and \(\{F_n\}_{n\in\mathbb N}\) is a F\o lner sequence,
			we have \(|F_n|\to\infty\). Hence, there exists $N'\in\mathbb{N}$ such that for every \(n\ge N'\),
			\begin{equation}\label{expo}
				|F_n|\leq \mathrm{e}^{(\lambda_{\delta}-\lambda^{*})|F_n|}.
			\end{equation}
			We shall denote from now on $\widetilde{N}=\max\{N,N'\}$ for simplicity.		
		
		\smallskip
		
		By the construction \eqref{cstr}, for $\varepsilon_1= (|F_{1}|+\cdots+|F_{\widetilde{N}}|)^{- 1} > 0$ and each $x \in X$, one has
		\begin{equation}\label{epsilon}
			0=F_{\lambda^{*}}(\mathcal{U},\Gamma_{\eta}(x),\{F_{n}\}_{n\in\mathbb{N}})\geq F_{\lambda^{*},\varepsilon_1}(\mathcal{U},\Gamma_{\eta}(x),\{F_{n}\}_{n\in\mathbb{N}}),
		\end{equation}
		and then, by the compactness of $\Gamma_{\eta}(x)$, there exist
		\begin{itemize}
			
			\item finitely many integers ${(\widetilde{N}\leq)}\ R_{x,1}<\cdots<R_{x,N_{x}}$, and
			
			\item finitely many open subsets $\{B_{x,1,r}\}_{r=1}^{k(x,1)}\subset \mathcal{U}^{F_{R_{x,1}}},
			\cdots,\{B_{x,N_{x},r}\}_{r=1}^{k(x,N_{x})}\subset \mathcal{U}^{F_{R_{x,N_{x}}}}$,
		\end{itemize}
		such that
		\[\Gamma_{\eta}(x)\subset U(x):=\bigcup_{j=1}^{N_{x}}\bigcup_{r=1}^{k(x,j)}B_{x,j,r}\ \ \text{and}\ \ \sum_{j=1}^{N_{x}}k(x,j)\mathrm{e}^{-\lambda^{*}|F_{R_{x,j}}|}<1.\]
		After merging possible repetitions among the sets \(F_{R_{x,j}}\), we may assume
			$
			F_{R_{x,1}},\cdots,F_{R_{x,N_x}}
			$
			are pairwise distinct. Furthermore, there exists a finite subset $S(x)\subset G$ and an open neighborhood of $x$, say $V(x)$, such that for each $x^{*}\in V(x)$, $B_{\eta}^{S(x)}(x^{*})\subset U(x)$.
		
		\smallskip
		
		To apply Lindenstrauss covering lemma (Lemma \ref{coveringlem}), let us construct $H_{i,j}$ as follows.
		
		\smallskip
		
		For $i=1$, note that we have obtained an open cover $\{V(x):x\in X\}$ as above. By the compactness of $X$, there exists $\{x_{1,1},\cdots,x_{1,P_{1}}\}\subset X$ such that $X= \bigcup_{q=1}^{P_{1}}V(x_{1,q})$. Let us rename $V(x_{1,q})$ to $V_{1,q}$ for each $q = 1, \cdots, P_1$ for simplicity. By rewriting those finitely many integers and finitely many open subsets for all points $x_{1,1},\cdots,x_{1,P_{1}}$,
		one has that:
		\begin{itemize}
			
			\item There exist finitely many integers $
			{(\widetilde{N}\leq)}\ R_{1,1}<\cdots<R_{1,N_{1}}
			$ with  $F_{R_{1,1}},\cdots,F_{R_{1,N_1}}$ pairwise distinct.
			
			\item For each $q\in\{1,\cdots,P_{1}\}$, there exist finitely many open subsets
			$$
			\{B_{1,q,r}^{(1)}\}_{r=1}^{k_{1}(1,q)}\subset \mathcal{U}^{F_{R_{1,1}}},\cdots,\{B_{N_{1},q,r}^{(1)}\}_{r=1}^{k_{1}(N_{1},q)}\subset \mathcal{U}^{F_{R_{1,N_{1}}}},$$
			some of $k_1 (j, q)$ for $j=1, \cdots, N_{1}$ may be zero, such that
			\[\Gamma_{\eta}(x_{1,q})\subset U_{1,q}:=\bigcup_{j=1}^{N_{1}}\bigcup_{r=1}^{k_{1}(j,q)}B_{j,q,r}^{(1)}\ \ \text{and}\ \ \sum_{j=1}^{N_{1}}k_{1}(j,q)\mathrm{e}^{-\lambda^{*}|F_{R_{1,j}}|}<1,\]
			additionally, there exists $S_{1,q}\in \mathfrak{F}_G$
			such that
			$B_{\eta}^{S_{1,q}}(x^{*})\subset U_{1,q}$
			for each $x^{*}\in V_{1,q}$.
			
			\item Set $S_{1}=\bigcup_{q=1}^{P_{1}}S_{1,q}$, which is a finite subset of $G$. Note that it holds
			$B_{\eta}^{S_{1}}(x^{*})\subset B_{\eta}^{S_{1,q}}(x^{*})\subset U_{1,q}$ for each $q\in\{1,\cdots,P_{1}\}$ and any $x^{*}\in V_{1,q}$.
		\end{itemize}
		
		\smallskip

		For each $i=2,\cdots,M$, as was done in the previous paragraphs (the only difference being that we shall consider $F_{\lambda^{*},\varepsilon_i}$ for $\varepsilon_i$ instead of $\varepsilon_1$, where $\varepsilon_i > 0$ will be specified later),
		there exists a finite subset
		\(\{x_{i,1},\cdots, x_{i,P_{i}}\}\subset X\), and a finite open cover \(\{ V_{i,1},\cdots,V_{i,P_{i}}\}\) of $X$ with $x_{i,q}\in V_{i,q}$ for each $q=1,\cdots,P_{i}$, furthermore:
		\begin{itemize}
			
			\item There exist finitely many integers
			$R_{i,1}<\cdots<R_{i,N_{i}}$ such that each $|F_{R_{i,j}}|\ge \frac{1}{\varepsilon_i}$, and the sets $F_{R_{i,1}},\cdots,F_{R_i,N_i}$ are pairwise distinct.
			
			\item For each $q\in\{1,\cdots,P_{i}\}$, there exist finitely many open subsets
			\[\{B_{1,q,r}^{(i)}\}_{r=1}^{k_{i}(1,q)}\subset \mathcal{U}^{F_{R_{i,1}}},\cdots,\{B_{N_{i},q,r}^{(i)}\}_{r=1}^{k_{i}(N_{i},q)}\subset \mathcal{U}^{F_{R_{i,N_{i}}}},
			\]
			some of $k_i (j, q)$ for $j=1, \cdots, N_{i}$ may be zero, such that
			\[\Gamma_{\eta}(x_{i,q})\subset U_{i,q}:=\bigcup_{j=1}^{N_{i}}\bigcup_{r=1}^{k_{i}(j,q)}B_{j,q,r}^{(i)}\ \ \text{and}\ \ \sum_{j=1}^{N_{i}}k_{i}(j,q)\mathrm{e}^{-\lambda^{*}|F_{R_{i,j}}|}<1.\]
			
			\item
			There exists $S_i\in \mathfrak{F}_G$ with $B_{\eta}^{S_{i}}(x^{*})\subset U_{i,q}$ for each
			$q\in\{1,\cdots,P_{i}\}$ and any $x^{*}\in V_{i,q}$.
		\end{itemize}
		Note that $G$ is assumed to be a countably infinite discrete amenable group and \(\{F_n\}_{n\in \mathbb{N}}\) is a F\o lner sequence. By choosing
		$\varepsilon_i> 0$ small enough, we may assume additionally that
		\begin{equation} \label{strong}
			\max_{p= 1}^{N_{i-1}} |F_{R_{i-1,p}}| < \min_{q= 1}^{N_i} |F_{R_{i,q}}|\ \ \text{and}\ \ \left|\bigcup_{i'<i}\bigcup_{j=1}^{N_{i'}}\{e_{G}\}F_{R_{i',j}}^{-1}F_{R_{i,k}}\right|\leq (1+\delta)|F_{R_{i,k}}|
		\end{equation}
		for each $i=2,\cdots,M$ and every $k= 1,\cdots,N_{i}$.
		
		The required $H_{i,j}$ is defined as $F_{R_{i,j}}$ for each $i=1,\cdots,M$ and every $j= 1,\cdots,N_{i}$.
		
		\smallskip
		
		Since \(\{F_n\}_{n \in \mathbb{N}}\) is a F\o lner sequence,
		there exists \(Q_1 \in \mathbb{N}\) (depending on the previously constructed \(H_{i,j}\), \(S_i\), and \(\delta\)) such that for all \(n \ge Q_1\), we have \(|F_n^*| \ge (1-\delta)|F_n|\), where
		\[
		F_n^* = \bigl\{ f \in F_n : H_{i,j}f \subset F_n, \; S_i f \subset F_n \text{ for each } i=1,\cdots,M \text{ and every } j=1,\cdots,N_i \bigr\}.
		\]
		
		Fix any sufficiently large \(n > Q_1\) and any \(x \in X\); we now estimate \(N\bigl(\mathcal{U}^{F_n}, B_\eta^{F_n}(x)\bigr)\).
		
		For each $i\in\{1,\cdots,M\}$ and every $g \in F_{n}^{*}$, notice that $\{V_{i,j}\}_{j=1}^{P_{i}}$ is an open cover of $X$, then $gx\in V_{i,i(g)}$ for some $i(g)\in \{1,\cdots,P_{i}\}$, and thus
		\[
		B_{\eta}^{S_{i}}(gx)\subset U_{i,i(g)}=\bigcup_{j=1}^{N_{i}}\bigcup_{r=1}^{k_{i}(j,i(g))}B_{j,i(g),r}^{(i)}.
		\]

		Pick arbitrarily $y\in B_{\eta}^{F_{n}}(x)$. Note
		\(
		gy\in gB_{\eta}^{F_{n}}(x)\subset gB_{\eta}^{S_{i}g}(x)=B_{\eta}^{S_{i}}(gx)
		\) (recall $g \in F_{n}^{*}$, so $S_i g\subset F_n$). For each $j= 1, \cdots, N_i$ we set
		\[
		A_{i,j}(y)=\{g\in F_{n}^{*}:gy\in \bigcup_{r=1}^{k_{i}(j,i(g))}B_{j,i(g),r}^{(i)}\}.
		\]
		It should be noted that \( i(g) \) depends only on \( i \), \( g \), and \( x \), and is independent of \( y \).
		Then
		\[
		F_{n}^{*}=\bigcup_{j=1}^{N_{i}}A_{i,j}(y) \ \ \text{and}\ \  H_{i,j}A_{i,j}(y)\subset F_{n}.
		\]
		Consider the collection $\{H_{i,j}a:a\in A_{i,j}(y);i=1,\cdots,M;\, j=1,\cdots,N_{i}\}$. It admits a $10\delta^\frac{1}{4}$-disjoint sub-collection $\mathcal{F}_{y}$ satisfying $|\cup \mathcal{F}_{y}|\geq (1-\delta-\delta^{\frac{1}{4}})|F_{n}|$ by Lemma \ref{coveringlem}.
		
		\smallskip
		
		Let $\mathfrak{F}=\{\mathcal{F}_{y}:y\in B_{\eta}^{F_{n}}(x)\}$. Note that each family $\mathcal{F}_{y}$ is a $10\delta^{\frac{1}{4}}$-disjoint, and hence $\frac{1}{2}$-disjoint because $\delta\in (0,20^{-4})$, sub-collection of $\{F_{R_{i,j}}a : F_{R_{i,j}}a\subset F_{n};\,i=1,\cdots,M;\, j= 1,\cdots,N_{i}\}$ (recall $H_{i, j} = F_{R_{i,j}}$), and that $N\in \mathbb{N}$ is constructed by Lemma \ref{combilem} according to $\delta$. Using  Lemma \ref{combilem} one has that, there exists $Q_{2}\in \mathbb{N}$ (depending on the previously constructed $R_{i,j}$) such that $|\mathfrak{F}|\leq 2^{\delta|F_{n}|}$ for every $n\ge Q_{2}$.
		
		\smallskip
		
		Fix arbitrarily given $\mathcal{F}\in \mathfrak{F}$, and define $W_{\mathcal{F}} = \{y\in B_{\eta}^{F_{n}}(x) : \mathcal{F}_{y}=\mathcal{F}\}$.
		Now let us show
		\begin{equation} \label{claim}
			N(\mathcal{U}^{F_{n}},W_{\mathcal{F}})\leq \mathrm{e}^{\lambda|F_{n}|}|\mathcal{U}|^{(\delta+\delta^{\frac{1}{4}})|F_{n}|}.
		\end{equation}

Take arbitrarily \(F\in \mathcal{F}\). Then \(F\) is exactly
			\(F_{R_{i,j}}a\) for some \(i\in \{1,\cdots,M\}\),
			\(j\in\{1,\cdots,N_i\}\), and \(a\in F_n^*\). In fact, such \(i\)
			exists uniquely by the construction \eqref{strong}; denote it by \(i_F\).
			Set
			$
			\mathcal{E}_F
			=
			\{
			a\in F_n^*:
			\text{ there exists }j\in\{1,\cdots,N_{i_F}\}
			\text{ such that }F=F_{R_{i_F,j}}a
			\}.
			$
			Recall that for each \(i\), the sets
			\(F_{R_{i,1}},\cdots,F_{R_{i,N_i}}\) are pairwise distinct. Thus for every \(a\in \mathcal{E}_F\),
there exists \(j_F(a)\in\{1,\cdots,N_{i_F}\}\) uniquely
			such that
			$
			F=F_{R_{i_F,j_F(a)}}a.
			$
			We now claim
			\begin{equation}\label{estimate WF}
				W_{\mathcal F}
				\subset
				\bigcup_{a\in \mathcal{E}_F}
				\bigcup_{r=1}^{k_{i_F}(j_F(a),i_F(a))}
				a^{-1}B_{j_F(a),i_F(a),r}^{(i_F)} .
			\end{equation}
			Indeed, take arbitrarily \(y\in W_{\mathcal F}\). Since \(\mathcal F_y=\mathcal F\)
			and \(F\in\mathcal F\), by the construction of \(\mathcal F_y\) there
			exist \(j\in\{1,\cdots,N_{i_F}\}\) and \(a\in A_{i_F,j}(y)\) such that
			$
			F=F_{R_{i_F,j}}a,
			$
			and hence \(a\in \mathcal{E}_F\) and \(j=j_F(a)\). Then \eqref{estimate WF} follows from the definition of
			\(A_{i_F,j_F(a)}(y)\), as
			\[
			ay\in
			\bigcup_{r=1}^{k_{i_F}(j_F(a),i_F(a))}
			B_{j_F(a),i_F(a),r}^{(i_F)},\quad\text{equivalently,}\quad y\in
			\bigcup_{r=1}^{k_{i_F}(j_F(a),i_F(a))}
			a^{-1}B_{j_F(a),i_F(a),r}^{(i_F)}.
			\]
Note that the sets appearing in the right-hand side of (\ref{estimate WF})
			are elements of \(\mathcal U^F\) since \(F=F_{R_{i_F,j_F(a)}}a\).
			We also note that \(\mathcal E_F\) has cardinality at most \(|F|\). In fact,
			since \(e_G\in F_{R_{i_F,j_F(a)}}\), every \(a\in\mathcal E_F\) satisfies
			$
			a\in F_{R_{i_F,j_F(a)}}a=F.
			$
			Thus \(\mathcal E_F\subset F\), and hence
			$
			|\mathcal E_F|\le |F|.
			$
			Therefore,
			\begin{equation}\label{dk}
				\begin{aligned}
					N(\mathcal U^F,W_{\mathcal F})
					&\le
					\sum_{a\in \mathcal{E}_F}
					k_{i_F}(j_F(a),i_F(a))\ \ \ (\mbox{by}\ (\ref{estimate WF}))
					\\&= \left(\sum_{a\in \mathcal{E}_F}k_{i_F}(j_F(a),i_F(a))\mathrm{e}^{-\lambda^{*}|F_{R_{i_F,j_F(a)}}|}\right)\cdot \mathrm{e}^{\lambda^{*}|F|} \\&<|\mathcal{E}_{F}|\cdot  \mathrm{e}^{\lambda^{*}|F|}\ \ \ (\mbox{as}\,\, k_{i_F}(j_F(a),i_F(a))\mathrm{e}^{-\lambda^{*}|F_{R_{i_F,j_F(a)}}|}<1)
					\\&\leq |F|\cdot \mathrm{e}^{(\lambda^{*}-\lambda_{\delta})|F|}\cdot \mathrm{e}^{\lambda_{\delta}|F|}\leq \mathrm{e}^{\lambda_{\delta}|F|}\ \ \ (\text{by}\ (\ref{expo})\  \text{and}\ R_{i_F,j_F(a)}\geq \widetilde{N}\geq N').
				\end{aligned}
		\end{equation}
		
		As $|F_{n}\setminus\cup \mathcal{F}|\leq (\delta + \delta^{\frac{1}{4}})|F_{n}|$ by the construction of applying Lemma \ref{coveringlem}, one has

		\[
		\begin{aligned}
			N(\mathcal{U}^{F_{n}},W_{\mathcal{F}})&\leq |\mathcal{U}|^{|F_{n}\setminus\cup \mathcal{F}|} \cdot\prod_{F\in \mathcal{F}}
			N (\mathcal{U}^{F}, W_{\mathcal{F}})
			\\&\leq |\mathcal{U}|^{(\delta+\delta^{\frac{1}{4}})|F_{n}|}\cdot\prod_{F\in \mathcal{F}}\mathrm{e}^{{{\lambda_{\delta}}}|F|}\ \ \ (\text{using \eqref{dk}})
			\\&\leq |\mathcal{U}|^{(\delta+\delta^{\frac{1}{4}})|F_{n}|}\cdot\exp{\Bigg({\frac{{{\lambda_{\delta}}}|\cup \mathcal{F}|}{1-10\delta^{\frac{1}{4}}}}\Bigg)}\ \ \ (\mbox{as}\,\, \mathcal{F} \,\,\text{is}\,\,10\delta^{\frac{1}{4}} \text{-disjoint})
			\\&\leq  |\mathcal{U}|^{(\delta+\delta^{\frac{1}{4}})|F_{n}|}\cdot \mathrm{e}^{\lambda|F_{n}|}\ \ \ (\text{note}\ {{\lambda_{\delta}}}=(1-10\delta^{\frac{1}{4}})\lambda\ \text{and}\ \cup \mathcal{F}\subset F_n).
		\end{aligned}
		\]
		This finishes the proof of \eqref{claim}.

		Finally, combining \eqref{claim} with the bound on \(|\mathfrak{F}|\), one has that
		\[
		N(\mathcal{U}^{F_n}, B_\eta^{F_n}(x)) \le \sum_{\mathcal{F} \in \mathfrak{F}} N(\mathcal{U}^{F_n}, W_\mathcal{F}) \le |\mathcal{U}|^{(\delta+\delta^{\frac{1}{4}})|F_n|} e^{\lambda |F_n|} 2^{\delta |F_n|}
		\]
		whenever $n>P=\max\{Q_{1},Q_{2}\}$, completing the proof.
	\end{proof}

	We shall also make use of the following technical yet straightforward fact.
	
	\begin{lemma}\label{lem21}
		For any $\delta > 0$, there exists $\mathcal{U} \in \mathfrak{C}_{X}^{o}$ such that its Lebesgue number satisfies \(\mathcal{L}(\mathcal{U}) \ge \delta\) (that is, for any given subset $A\subset X$, once its diameter satisfies $\mathrm{diam} (A)< \delta$ one has that $A$ is contained in some element of the cover $\mathcal{U}$), its diameter satisfies \(\mathrm{diam}(\mathcal{U},d) \le 3\delta\), and its cardinality is \(|\mathcal{U}| = r_{\{e_G\}}(X, \frac{\delta}{2}, d)\).
	\end{lemma}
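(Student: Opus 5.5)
The plan is to build the cover from a minimal spanning set. Let $m=r_{\{e_G\}}(X,\frac{\delta}{2},d)$ and choose a $(\{e_G\},\frac{\delta}{2})$-spanning set $\{x_1,\cdots,x_m\}$ for $X$ of this minimal cardinality. Every point of $X$ then lies within distance $\frac{\delta}{2}$ of some $x_k$, so $X=\bigcup_k B_{\delta/2}(x_k)$. Define
\[
\mathcal{U}=\{O_{3\delta/2}(x_k): k=1,\cdots,m\}.
\]
This is a finite open cover of $X$, because each $O_{3\delta/2}(x_k)$ contains $B_{\delta/2}(x_k)$. Its cardinality is at most $m$; it equals $m$ as a family indexed by $k$, and one can keep repeated sets listed separately, or simply note that only the bound $|\mathcal U|\le m$ matters later.

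The diameter bound is immediate: each element has diameter at most $2\cdot\frac{3\delta}{2}=3\delta$.

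For the Lebesgue number, take any $A\subset X$ with $\mathrm{diam}(A)<\delta$, and assume $A\neq\emptyset$ since otherwise the claim is trivial. Pick $a\in A$ and choose $k$ with $d(a,x_k)\le\frac{\delta}{2}$. For every $y\in A$ we get
\[
d(y,x_k)\le d(y,a)+d(a,x_k)<\delta+\tfrac{\delta}{2}=\tfrac{3\delta}{2},
\]
so $A\subset O_{3\delta/2}(x_k)\in\mathcal U$. Hence $\mathcal L(\mathcal U)\ge\delta$.

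There is no real obstacle here. The only point needing care is that the open radius $\frac{3\delta}{2}$ gives strict inequality in the Lebesgue step while still keeping the diameter at most $3\delta$, and this is exactly why the statement uses $3\delta$ rather than something smaller.
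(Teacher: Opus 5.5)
Your proof is correct and essentially identical to the paper's. The paper also takes a minimal $(\{e_G\},\frac{\delta}{2})$-spanning set $E$, uses the cover $\{O_{\frac{3}{2}\delta}(x): x\in E\}$, and gets the Lebesgue number bound from the triangle inequality. Your remark that coinciding balls could make $|\mathcal U|$ smaller than $r_{\{e_G\}}(X,\frac{\delta}{2},d)$ as a set is a fair point that the paper glosses over. You are also right that only the upper bound $|\mathcal U|\le r_{\{e_G\}}(X,\frac{\delta}{2},d)$ is used later.
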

	
	\begin{proof}
		Let $E$ be an $(\{e_{G}\},\frac{\delta}{2})$-spanning set of $X$ with $|E| = r_{\{e_{G}\}}(X,\frac{\delta}{2},d)$. Consider $\mathcal{U} = \left\{O_{\frac{3}{2}\delta}(x) : x \in E\right\}$. Then $\mathcal{U}$ is a finite open cover of $X$ with $\mathrm{diam}(\mathcal{U},d) \le 3\delta$, and $\mathcal{L}(\mathcal{U}) \ge \delta$ follows directly from the triangle inequality.
	\end{proof}
	
	We are now ready to prove the main theorems.
	
	\begin{proof}[Proof of Theorem \ref{thm5}]
		Let us prove firstly
		\begin{equation}\label{prop22}
			\overline{\mathrm{mdim}}_{M}^{B}(X,\{F_{n}\}_{n\in\mathbb{N}},d)= \overline{\mathrm{mdim}}_{M}^{P}(X,\{F_{n}\}_{n\in\mathbb{N}},d)=
			\overline{\mathrm{mdim}}_{M}(X,\{F_{n}\}_{n\in\mathbb{N}},d).
		\end{equation}
		By Proposition \ref{prop20} (3)
		it remains to show $\overline{\mathrm{mdim}}_{M}(X,\{F_{n}\}_{n\in\mathbb{N}},d)\leq \overline{\mathrm{mdim}}_{M}^{B}(X,\{F_{n}\}_{n\in\mathbb{N}},d)$. In fact, we would prove that for any $\delta>0$
		\begin{equation} \label{xy}
			h_{\text{top}}(X,3\delta,\{F_{n}\}_{n\in\mathbb{N}},d)\leq h_{\text{top}}^{B}(X,\frac{\delta}{3},\{F_{n}\}_{n\in\mathbb{N}},d).
		\end{equation}
		
		\smallskip
		
		Fix arbitrarily given $\delta>0$. Let $\mathcal{U}$ be the corresponding open cover given by Lemma \ref{lem21} according to $\delta$. As $\mathcal{U}$ is a finite open cover of $X$
		such that its Lebesgue number satisfies \(\mathcal{L}(\mathcal{U}) \ge \delta\) and its diameter satisfies \(\mathrm{diam}(\mathcal{U},d) \le 3\delta\), then it is trivial obtain from the definitions the following trivial estimates
		\[
		h_{\text{top}}(X,3\delta,\{F_{n}\}_{n\in\mathbb{N}},d)\leq 	h_{\text{top}}(X,\mathcal{U},\{F_{n}\}_{n\in\mathbb{N}}),
		\]
		\[F_{\lambda, \varepsilon}(\mathcal{U}, X, \{F_n\}_{n\in \mathbb{N}})\le F_{\lambda, \varepsilon}(\frac{\delta}{3}, X, \{F_n\}_{n\in \mathbb{N}}, d)\] for all ${\lambda> 0}$ and $\varepsilon > 0$,
		and then
		\(
		h_{\mathrm{dim}}(X,\mathcal{U},\{F_{n}\}_{n\in\mathbb{N}})\leq h_{\text{top}}^{B}(X,\frac{\delta}{3},\{F_{n}\}_{n\in\mathbb{N}},d).
		\)
		We remark that, by letting $\mathcal{W} = \{X\}$ in \cite[Proposition 2.6]{Dou-Wang-Zhang2025}, one deduces directly
		\begin{equation*}\label{6.2}
			h_{\text{top}}(X,\mathcal{U},\{F_{n}\}_{n\in\mathbb{N}})= h_{\mathrm{dim}}(X,\mathcal{U},\{F_{n}\}_{n\in\mathbb{N}})
		\end{equation*}
		under the assumption that $\{F_{n}\}_{n\in\mathbb{N}}$ is a tempered F\o lner sequence satisfying \eqref{1.1}. Along with the above estimates, this establishes the required inequality \eqref{xy}.
		
		Trivially,
		\(
		\max_{x\in X}\overline{\mathrm{mdim}}_{M}(x,\{F_{n}\}_{n\in\mathbb{N}},d)\leq \overline{\mathrm{mdim}}_{M}(X,\{F_{n}\}_{n\in\mathbb{N}},d).
		\)
		In view of \eqref{prop22} and Proposition \ref{prop20} (3), to complete the proof, it suffices to show
		\begin{equation} \label{lem24}
			\overline{\mathrm{mdim}}_{M}^{B}(X,\{F_{n}\}_{n\in\mathbb{N}},d)\le \max_{x\in X}\overline{\mathrm{mdim}}_{M}^{B}(x,\{F_{n}\}_{n\in\mathbb{N}},d).
		\end{equation}
		
		Let $\mathcal{W}_{1}$ be a finite closed cover of $X$ with $\mathrm{diam}(\mathcal{W}_{1},d)\leq 1$. Then by Proposition \ref{prop20} (2), there exists $W_{1}\in \mathcal{W}_{1}$ such that  $\overline{\mathrm{mdim}}_{M}^{B}(X,\{F_{n}\}_{n\in\mathbb{N}},d)=\overline{\mathrm{mdim}}_{M}^{B}(W_{1},\{F_{n}\}_{n\in\mathbb{N}},d)$. Similarly, let $\mathcal{W}_2$ be a finite closed cover of $W_{1}$ with $\mathrm{diam}(\mathcal{W}_{2},d)\leq \frac{1}{2}$. By same argument, there exists $W_{2}\in \mathcal{W}_{2}$ such that  $\overline{\mathrm{mdim}}_{M}^{B}(X,\{F_{n}\}_{n\in\mathbb{N}},d)=\overline{\mathrm{mdim}}_{M}^{B}(W_{2},\{F_{n}\}_{n\in\mathbb{N}},d)$. Continuing this process, we obtain a sequence of closed sets $\{W_{m}\}_{m\in\mathbb{N}}$ such that $\mathrm{diam}({W_{m}},d)\leq \frac{1}{2^{m}}$, $W_{m+1}\subset W_{m}$ and
		\(
		\overline{\mathrm{mdim}}_{M}^{B}(X,\{F_{n}\}_{n\in\mathbb{N}},d)=\overline{\mathrm{mdim}}_{M}^{B}(W_{m},\{F_{n}\}_{n\in\mathbb{N}},d)
		\)
		for every $m\in \mathbb{N}$.
		By the Cantor intersection theorem, there is uniquely a point $y$ such that $\{y\}=  \bigcap_{m=1}^{\infty}W_{m}$. Thus along with Proposition \ref{prop20} (1) we obtain
		\[		\overline{\mathrm{mdim}}_{M}^{B}(X,\{F_{n}\}_{n\in\mathbb{N}},d)=\overline{\mathrm{mdim}}_{M}^{B}(y,\{F_{n}\}_{n\in\mathbb{N}},d)\le\max_{x\in X}\overline{\mathrm{mdim}}_{M}^{B}(x,\{F_{n}\}_{n\in\mathbb{N}},d).
		\]
		This proves \eqref{lem24} and then completes the proof.
	\end{proof}

	\begin{proof}[{Proof of Theorem \ref{thm6}}]
		Fix arbitrarily given $\varepsilon>0$. In view of the trivial fact that
		\[
		\sup_{x\in X}h_{\text{top}}(\Gamma_{\varepsilon}(x),\delta,\{F_{n}\}_{n\in\mathbb{N}},d)\leq 	h_{\text{top}}(X,\delta,\{F_{n}\}_{n\in\mathbb{N}},d),
		\]
		clearly
		\[
		\limsup_{\delta\to 0}\sup_{x\in X}\frac{h_{\text{top}}(\Gamma_{\varepsilon}(x),\delta,\{F_{n}\}_{n\in\mathbb{N}},d)}{\log\frac{1}{\delta}}
		\leq \overline{\mathrm{mdim}}_{M}(X,\{F_n\}_{n\in\mathbb{N}},d).
		\]
		With the help of {Proposition \ref{rmk-independent} and Proposition \ref{prop20} (3)}, it remains to show
		\begin{equation}\label{equ6.3}
			\overline{\mathrm{mdim}}_{M}(X,\{\widetilde F_n\}_{n\in\mathbb{N}},d)\leq 	\limsup_{\delta\to 0}\sup_{x\in X}\frac{h_{\text{top}}^{B}(\Gamma_{\varepsilon}(x),\frac{2 \delta}{3},\{F_{n}\}_{n\in\mathbb{N}},d)}{\log\frac{1}{\delta}},
		\end{equation}
		{where $
			\widetilde F_n:=F_n\cup\{e_G\}$, $n\in\mathbb N.
			$
			It is easy to see that \(\{\widetilde F_n\}_{n\in\mathbb N}\) is also a tempered F\o lner
			sequence, and satisfies
			\eqref{1.1}.
			We shall apply Proposition~\ref{prop14} to this auxiliary sequence.}
		
		Fix any $\delta>0$ and choose an open cover \(\mathcal{U}\) by Lemma \ref{lem21} corresponding to \(2\delta\).
		Take any
		$$\lambda>\sup_{x\in X}h_{\text{top}}^{B}(\Gamma_{\varepsilon}(x),\frac{2 \delta}{3},\{F_{n}\}_{n\in\mathbb{N}},d).$$
		As \(\mathcal{L}(\mathcal{U}) \ge 2\delta\), for all $\kappa > 0$ one has \(F_{\lambda, \kappa}(\mathcal{U}, \Gamma_\varepsilon(x), \{F_n\}_{n\in \mathbb{N}})\le F_{\lambda, \kappa}(\frac{2 \delta}{3}, \Gamma_\varepsilon(x), \{F_n\}_{n\in \mathbb{N}}, d)\). Furthermore, by the definition of Bowen's dimensional entropy in \S3.2.3, it follows that
			$F_{\lambda, \kappa}(\mathcal{U}, \Gamma_\varepsilon(x), \{\widetilde F_n\}_{n\in \mathbb{N}})\leq |\mathcal{U}|\cdot F_{\lambda, \kappa}(\mathcal{U}, \Gamma_\varepsilon(x), \{F_n\}_{n\in \mathbb{N}})$. Thus
			\[\sup_{x\in X} h_{\mathrm{dim}}(\Gamma_\varepsilon(x), \mathcal{U}, \{\widetilde F_n\}_{n\in \mathbb{N}}) < \lambda.
			\]
			
			Apply Proposition \ref{prop14}
			with \(\eta = \varepsilon\). For all sufficiently small \(\xi > 0\) and large enough \(n\),
			\[
			\sup_{x\in X} N(\mathcal{U}^{\widetilde F_n}, B_\varepsilon^{\widetilde F_n}(x)) \le |\mathcal{U}|^{(\xi+\xi^{\frac{1}{4}})|\widetilde F_n|} e^{\lambda |\widetilde F_n|} 2^{\xi |\widetilde F_n|},
			\]
			which implies a uniform upper bound for \(r_{\widetilde F_n}(B_\varepsilon^{\widetilde F_n}(x), 6\delta, d)\)
			by noting \(\mathrm{diam}(\mathcal{U},d) \le 6\delta\) and \(|\mathcal{U}| = r_{\{e_G\}}(X, \delta, d)\), that is,
			\[
			\sup_{x\in X}r_{\widetilde F_{n}}(B_{\varepsilon}^{\widetilde F_{n}}(x),6\delta,d)\leq |\mathcal{U}|^{(\xi+\xi^{\frac{1}{4}})|\widetilde F_{n}|}\mathrm{e}^{\lambda|\widetilde F_{n}|}2^{\xi|\widetilde F_{n}|}=r_{\{e_{G}\}}(X,\delta,d)^{(\xi+
				\xi^{\frac{1}{4}})|\widetilde F_{n}|}\cdot \mathrm{e}^{\lambda|\widetilde F_{n}|}2^{\xi|\widetilde F_{n}|}.
			\]
			For each $n\in \mathbb{N}$ large enough as above, let \(E_n\) be an \((\widetilde F_n, \varepsilon)\)-spanning set for \(X\) with
			$|E_{n}|=r_{\widetilde F_{n}}(X,\varepsilon,d)$. Clearly,
			$\{B_{\varepsilon}^{\widetilde F_{n}}(y): y\in E_n\}$ forms a finite cover of $X$, and
			then
			\[
			\begin{aligned}
				s_{\widetilde F_{n}}(X,12\delta,d)&\leq r_{\widetilde F_{n}}(X,6\delta,d) \ \ (\text{noting}\ \eqref{se-sp})\\
				&\leq \sum_{y\in E_{n}}r_{\widetilde F_{n}}(B_{\varepsilon}^{\widetilde F_{n}}(y),6\delta,d)
				\\&\leq r_{\widetilde F_{n}}(X,\varepsilon,d)\cdot r_{\{e_{G}\}}(X,\delta,d)^{(\xi+\xi^{\frac{1}{4}})|\widetilde F_{n}|}\cdot \mathrm{e}^{\lambda|\widetilde F_{n}|}2^{\xi|\widetilde F_{n}|}.
			\end{aligned}
			\]
			
			By taking logarithms, dividing by \(|\widetilde F_n|\), taking the \(\limsup\) as \(n \to \infty\), then letting \(\xi \to 0\) and
			\(\lambda \searrow \sup_{x\in X} h_{\mathrm{top}}^{B}\bigl(\Gamma_{\varepsilon}(x),\frac{2 \delta}{3},\{F_{n}\}_{n\in\mathbb{N}},d\bigr)\), we obtain
			\begin{equation*} \label{fir}
				h_{\mathrm{top}}(X,12\delta,\{\widetilde F_{n}\}_{n\in\mathbb{N}},d)\le
				\limsup_{n\to\infty} \frac{1}{|\widetilde F_n|} \log r_{\widetilde F_n} (X, \varepsilon, d)
				+
				\sup_{x\in X}h_{\mathrm{top}}^{B}\bigl(\Gamma_{\varepsilon}(x),\frac{2 \delta}{3},\{ F_{n}\}_{n\in\mathbb{N}},d\bigr).
			\end{equation*}
			Thus the required inequality \eqref{equ6.3} follows readily by dividing by \(\log\bigl(\frac{1}{12\delta}\bigr)\) and taking the \(\limsup\) as \(\delta\to 0\). We also use
			the elementary fact that for every fixed \(\varepsilon>0\),
			\begin{equation} \label{sec}
				\limsup_{n\to\infty} \frac{1}{|\widetilde F_n|} \log r_{\widetilde F_n} (X, \varepsilon, d)< \infty.
		\end{equation}
		In fact, if let $\mathcal{V}$ be a finite open cover of $X$ such that its diameter satisfies \(\mathrm{diam}(\mathcal{V},d) \le \varepsilon\), then
		\(r_F(X, \varepsilon, d)\le N (\mathcal{V}^{F}, X)\le |\mathcal{V}|^{|F|}\) for each $F\in \mathfrak{F}_G$, which implies directly \eqref{sec}.
	\end{proof}
	
	The temperedness condition on the F\o lner sequence \(\{F_n\}_{n\in \mathbb{N}}\) is essential in Theorems \ref{thm5} and \ref{thm6}, as it enables the application of the Lindenstrauss covering lemma (Lemma \ref{coveringlem}). A natural question is whether similar results hold for arbitrary F\o lner sequences. Just as commented in Remark \ref{tem}, we can extend \eqref{f2}, \eqref{2.2-2}, \eqref{2.2-3} and \eqref{f1} to arbitrary F\o lner sequences by
	using the standard technique of passing to a tempered subsequence and noting that topological entropy is monotone with respect to taking subsequences of F\o lner sequences.
	
	Below let us present firstly an easy fact whose proof comes directly from the definition.
	
	\begin{lemma} \label{DWZ-prop}
		Let $\{F_{n}^{*}\}_{n\in\mathbb{N}}$ be a subsequence of the F\o lner sequence $\{F_{n}\}_{n\in\mathbb{N}}$. Then
		\[
		h_{\mathrm{top}}(K,\delta,\{F_{n}^{*}\}_{n\in\mathbb{N}},d)\leq h_{\mathrm{top}}(K,\delta,\{F_{n}\}_{n\in\mathbb{N}},d)\
		\ \text{for every}\ \emptyset\neq K\subset X\text{ and }\delta>0.
		\]
	\end{lemma}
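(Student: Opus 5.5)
The inequality reduces to a comparison of the separated-set counts along the two sequences, carried out directly from the definition \eqref{topo}.

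Fix $\emptyset\neq K\subset X$ and $\delta>0$. Write the subsequence as $F_n^*=F_{m_n}$, where $m_1<m_2<\cdots$ is a strictly increasing sequence of indices. The quantity $s_{F}(K,\delta,d)$ depends only on the finite set $F$, $K$, $\delta$ and $d$, and not on the sequence in which $F$ occurs. So for every $n$,
\[
\frac{1}{|F_n^*|}\log s_{F_n^*}(K,\delta,d)=a_{m_n},\qquad\text{where } a_m:=\frac{1}{|F_m|}\log s_{F_m}(K,\delta,d).
\]

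The left-hand side of the lemma is therefore $\limsup_{n\to\infty}a_{m_n}$. Since $m_n\to\infty$, the sequence $(a_{m_n})_{n\in\mathbb N}$ is a subsequence of $(a_m)_{m\in\mathbb N}$. The limsup of a subsequence never exceeds the limsup of the full sequence, which gives
\[
\limsup_{n\to\infty}a_{m_n}\le\limsup_{m\to\infty}a_m=h_{\mathrm{top}}(K,\delta,\{F_n\}_{n\in\mathbb N},d).
\]

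This holds in the extended reals, so no finiteness assumption is needed; in any case the values are finite by \eqref{sec}. The argument has no real obstacle. The only point to check is that $s_F(K,\delta,d)$ is determined by $F$ alone, and this is immediate from its definition as the maximal cardinality of an $(F,\delta)$-separated subset of $K$.
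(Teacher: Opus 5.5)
Your proof is correct and is exactly the argument the paper intends: it says only that the lemma comes directly from the definition \eqref{topo}, i.e., the $\limsup$ along the subsequence $(a_{m_n})$ cannot exceed the $\limsup$ of $(a_m)$. Your aside on finiteness via \eqref{sec} is unnecessary, and \eqref{sec} is stated for $X$ and $\widetilde F_n$ rather than for $K$ and $F_n$, but this does not affect the argument.
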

	
	Now let us extend \eqref{f2}, \eqref{2.2-2}, \eqref{2.2-3} and \eqref{f1} as follows.
	
	\begin{theorem} \label{without-tem}
		Let $\{F_n\}_{n\in \mathbb{N}}\subset \mathfrak{F}_{G}$ be an arbitrary F\o lner sequence and $\varepsilon>0$. Then
the identities \eqref{f2}, \eqref{2.2-2} and \eqref{2.2-3} still remain valid, furthermore,
\[
\overline{\mathrm{mdim}}_M(X, \{F_n\}_{n\in \mathbb{N}}, d) = \limsup_{\delta \to 0} \sup_{x \in X} \frac{h_{\mathrm{top}}(\Gamma_\varepsilon(x), \delta, \{F_n\}_{n\in \mathbb{N}}, d)}{\log \frac{1}{\delta}}.
\]
	\end{theorem}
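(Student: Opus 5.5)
The plan is to reduce everything either to statements that never used temperedness or \eqref{1.1}, or to a tempered subsequence, using Proposition \ref{rmk-independent} and Lemma \ref{DWZ-prop}.

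\emph{Step 1: the identities \eqref{f2}, \eqref{2.2-2} and \eqref{2.2-3}.} The inequality ``$\max_x \le$ global'' is immediate from Proposition \ref{prop20} (1), since $X$ is a closed neighborhood of every point. For the reverse inequality I would rerun the nested closed-cover argument from the proof of Theorem \ref{thm5}. Choose finite closed covers of mesh $2^{-m}$ and use Proposition \ref{prop20} (2) to select $W_{m+1}\subset W_m$ carrying the full global value. The Cantor intersection theorem then gives a point $y$, and every closed neighborhood of $y$ contains some $W_m$, so by Proposition \ref{prop20} (1) the local value at $y$ equals the global one; in particular the maximum is attained. This argument uses only parts (1) and (2) of Proposition \ref{prop20}. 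Their proofs, including the cited finite-stability of $h^P_{\mathrm{top}}$ from \cite{Dou-Zheng-Zhou2023}, do not use temperedness or \eqref{1.1}. I would check this point explicitly for the packing and Bowen cases; once it holds, the three identities are valid for every F\o lner sequence. Note that I will not need the equality of the three global quantities, which is where \eqref{1.1} entered through Proposition \ref{prop20} (3).

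\emph{Step 2: the localization formula.} The inequality ``$\ge$'' is trivial, since $\Gamma_\varepsilon(x)\subset X$ gives $h_{\mathrm{top}}(\Gamma_\varepsilon(x),\delta,\{F_n\},d)\le h_{\mathrm{top}}(X,\delta,\{F_n\},d)$. For ``$\le$'' I would extract a subsequence $\{F_n^*\}$ that is tempered and satisfies \eqref{1.1}. Temperedness comes from Lindenstrauss's result that every F\o lner sequence has a tempered subsequence, obtained by inductively choosing $F_{n_k}$ so invariant that $|\bigcup_{j<k}F_{n_j}^{-1}F_{n_k}|\le 2|F_{n_k}|$. Growth is arranged simultaneously by also requiring $|F_{n_k}|\ge k^2$, which is possible since $|F_n|\to\infty$. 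Then the chain of inequalities is
\[
\overline{\mathrm{mdim}}_M(X,\{F_n\},d)=\overline{\mathrm{mdim}}_M(X,\{F^*_n\},d)=\limsup_{\delta\to0}\sup_{x}\frac{h_{\mathrm{top}}(\Gamma_\varepsilon(x),\delta,\{F_n^*\},d)}{\log\frac1\delta}\le\limsup_{\delta\to0}\sup_{x}\frac{h_{\mathrm{top}}(\Gamma_\varepsilon(x),\delta,\{F_n\},d)}{\log\frac1\delta}.
\]
Here the first equality is Proposition \ref{rmk-independent}, the second is Theorem \ref{thm6} applied to $\{F_n^*\}$, and the last inequality is Lemma \ref{DWZ-prop} applied pointwise before taking the supremum and the limsup.

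\emph{Main obstacle.} The only delicate point is the subsequence construction: it must satisfy temperedness and \eqref{1.1} at the same time, which the inductive selection above handles. A secondary point is making sure, in Step 1, that Proposition \ref{prop20} (2) for $\overline{\mathrm{mdim}}^P_M$ and $\overline{\mathrm{mdim}}^B_M$ really holds for arbitrary F\o lner sequences. For the Bowen case the subadditivity $F_\lambda(\delta,Z,\cdot)\le\sum_i F_\lambda(\delta,Z_i,\cdot)$ is clearly independent of the sequence. For the packing case I would re-verify the cited argument. Should that fail, the fallback would be to pass to the subsequence there as well. This works cleanly only in the direction where monotonicity under subsequences goes the right way: packing entropy decreases, while Bowen entropy increases, under passing to a subsequence.
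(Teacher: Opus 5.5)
Your proposal is correct and follows the paper's proof. The three max-identities come from rerunning the nested closed-cover argument with Proposition \ref{prop20} (1)--(2), which uses neither temperedness nor \eqref{1.1}. The localization formula follows by passing to a tempered subsequence satisfying \eqref{1.1} and chaining Proposition \ref{rmk-independent}, Theorem \ref{thm6} and Lemma \ref{DWZ-prop}, exactly as in the paper. Your inductive construction of that subsequence (choosing $|\bigcup_{j<k}F_{n_j}^{-1}F_{n_k}|\le 2|F_{n_k}|$ and $|F_{n_k}|\ge k^2$) fills in a step the paper only asserts.
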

	
	\begin{proof}
		{Note that the inequality \eqref{lem24} in the proof of Theorem \ref{thm5}, relies on neither the temperedness condition nor the growth condition (\ref{1.1}), and hence holds for arbitrary F\o lner sequences, from which the identity \eqref{2.2-3} follows directly. The same reasoning along with Proposition \ref{prop20} (2) implies the identities \eqref{f2} and \eqref{2.2-2}.} Now it suffices to prove
		\begin{equation}\label{move-tempered}
			\limsup_{\delta \to 0} \sup_{x \in X} \frac{h_{\mathrm{top}}(\Gamma_\varepsilon(x), \delta, \{F_n\}_{n\in \mathbb{N}}, d)}{\log \frac{1}{\delta}}\geq \overline{\mathrm{mdim}}_M(X, \{F_n\}_{n\in \mathbb{N}}, d).
		\end{equation}
		Note that for the F\o lner sequence $\{F_n\}_{n\in \mathbb{N}}$ there always exists a tempered subsequence $\{F_{n}^{*}\}_{n\in \mathbb{N}}$ satisfying the growth condition (\ref{1.1}). Applying (\ref{f1}) to $\{F_{n}^{*}\}_{n\in \mathbb{N}}$ we obtain
		\begin{equation}\label{3.12}
			\limsup_{\delta \to 0} \sup_{x \in X} \frac{h_{\mathrm{top}}(\Gamma_\varepsilon(x), \delta, \{F_n^{*}\}_{n\in \mathbb{N}}, d)}{\log \frac{1}{\delta}}= \overline{\mathrm{mdim}}_M(X, \{F_n^{*}\}_{n\in \mathbb{N}}, d).
		\end{equation}
		Recall that the global metric mean dimension is independent of the choice of F\o lner sequence ({Proposition} \refeq{rmk-independent}), then (\ref{move-tempered}) follows directly from Lemma \ref{DWZ-prop} and above (\ref{3.12}).
	\end{proof}

	\section{The construction of Theorem \ref{count}}\label{example}
	
	{\it Throughout the whole section we shall work with $\mathbb{Z}$-actions along with the canonical F\o lner sequence \(F_n =
		\{0, 1, \cdots, n-1\}\) for each $n\in\mathbb N$.} For the reader's convenience, for each $\varepsilon>0$
	let us recall from Section \ref{state} the following notations,
	where we omit the sequence \(\{F_n\}_{n\in\mathbb N}\) from the notation for simplicity as it is fixed throughout the whole section:
	\begin{equation}\label{def}
		\begin{aligned}
			\overline{D}_{\mathrm{int}}(X,\varepsilon,d)&:=\limsup_{\delta\to 0}\sup_{x\in X}
			\frac{h_{\mathrm{top}}(\Gamma_{\varepsilon}^{\{F_n\}_{n\in\mathbb N}}(x),\delta,\{F_n\}_{n\in\mathbb N},d)}
			{\log \frac{1}{\delta}},\\
			\underline{D}_{\mathrm{int}}(X,\varepsilon,d)&:=\liminf_{\delta\to 0}\sup_{x\in X}
			\frac{h_{\mathrm{top}}(\Gamma_{\varepsilon}^{\{F_n\}_{n\in\mathbb N}}(x),\delta,\{F_n\}_{n\in\mathbb N},d)}
			{\log \frac{1}{\delta}},\\
			\overline{D}_{\mathrm{ext}}(X,\varepsilon,d)&:=\sup_{x\in X}\limsup_{\delta\to 0}
			\frac{h_{\mathrm{top}}(\Gamma_{\varepsilon}^{\{F_n\}_{n\in\mathbb N}}(x),\delta,\{F_n\}_{n\in\mathbb N},d)}
			{\log \frac{1}{\delta}},\\
			\underline{D}_{\mathrm{ext}}(X,\varepsilon,d)&:=\sup_{x\in X}\liminf_{\delta\to 0}
			\frac{h_{\mathrm{top}}(\Gamma_{\varepsilon}^{\{F_n\}_{n\in\mathbb N}}(x),\delta,\{F_n\}_{n\in\mathbb N},d)}
			{\log \frac{1}{\delta}}.
		\end{aligned}
	\end{equation}
	
	In this section we present a family of $\mathbb{Z}$-actions for which the following four quantities
	\[
	\overline{D}_{\mathrm{int}},
	\quad
	\underline{D}_{\mathrm{int}},
	\quad
	\underline{D}_{\mathrm{ext}},
	\quad
	\overline{D}_{\mathrm{ext}}
	\]
	can be completely separated.
	As the first step, we construct a shift space $(X,\sigma)$ satisfying
	\begin{equation}\label{first-step-separation}
		\overline{D}_{\mathrm{int}}(X,\varepsilon,d)=
		\underline{D}_{\mathrm{int}}(X,\varepsilon,d)
		> \overline{D}_{\mathrm{ext}}(X,\varepsilon,d) =
		\underline{D}_{\mathrm{ext}}(X,\varepsilon,d)
	\end{equation}
	for every $\varepsilon\in(0,1)$. This construction demonstrates that the supremum and limit superior of topological entropy, and hence packing topological entropy and Bowen's dimensional entropy in the localization formula, cannot
		be interchanged (by Theorem \ref{thm6} and Proposition \ref{prop20} (3)).
Thus the answer to Question (1) (posed in \cite[\S 6]{Yang-Chen-Zhou2024}) is negative.
 To achieve a stronger separation, we introduce a family of systems $Y_E$ whose upper and lower parameters can be prescribed independently. Finally, by taking the product of $X$ with a suitable member of $Y_E$, we combine these two constructions to obtain a system in which all of above four quantities are pairwise distinct.

	\begin{remark}\label{rem:other-questions}
		The same construction can also be used to address \cite[\S 6, Questions~(2) and~(3)]{Yang-Chen-Zhou2024}. For clarity, let us briefly explain it as follows.
		
		Let \(T: W\rightarrow W\) be a homeomorphism on a compact metric space \((W,d)\). Set
		\[
		\Phi_{\varepsilon}(w,\delta,d)
		:=
		\limsup_{n\to\infty}
		\frac{1}{|F_n|}
		\log s_{F_n}
		\bigl(
		T^{-n}\Gamma^{\{F_m\}_{m\in\mathbb{N}}}_{\varepsilon}(w),
		\delta,d
		\bigr),
		\qquad w\in W,
		\]
		and then define the four corresponding preimage quantities by
		\[
		\begin{aligned}
			\overline{D}_{\mathrm{int}}^{\mathrm{pre}}(W,\varepsilon,d)
			&:=
			\limsup_{\delta\to0}
			\sup_{w\in W}\frac{
				\Phi_{\varepsilon}(w,\delta,d)
			}
			{\log\frac1\delta},
			&
			\underline{D}_{\mathrm{int}}^{\mathrm{pre}}(W,\varepsilon,d)
			&:=
			\liminf_{\delta\to0}
			\sup_{w\in W}\frac{
				\Phi_{\varepsilon}(w,\delta,d)
			}
			{\log\frac1\delta},
			\\
			\overline{D}_{\mathrm{ext}}^{\mathrm{pre}}(W,\varepsilon,d)
			&:=
			\sup_{w\in W}
			\limsup_{\delta\to0}
			\frac{
				\Phi_{\varepsilon}(w,\delta,d)
			}
			{\log\frac1\delta},
			&
			\underline{D}_{\mathrm{ext}}^{\mathrm{pre}}(W,\varepsilon,d)
			&:=
			\sup_{w\in W}
			\liminf_{\delta\to0}
			\frac{
				\Phi_{\varepsilon}(w,\delta,d)
			}
			{\log\frac1\delta}.
		\end{aligned}
		\]
		By the order of the supremum and {the inclusions of the relevant sets}, we have
		\[
		\begin{aligned}
			&\overline{D}_{\mathrm{ext}}^{\mathrm{pre}}(W,\varepsilon,d)
			\leq
			\overline{D}_{\mathrm{int}}^{\mathrm{pre}}(W,\varepsilon,d)
			\leq \overline{\mathrm{mdim}}_{M}(W,d),
		\end{aligned}
		\]
		where $\overline{\mathrm{mdim}}_{M}(W,d)$ denotes the upper metric mean dimension of {$W$}.

		Question~(2) asks whether \cite[Theorem 1.3]{Yang-Chen-Zhou2024} remains valid when the supremum over the base point {is} placed outside the limiting process. In particular, a strict gap between
		\(\overline{D}_{\mathrm{ext}}^{\mathrm{pre}}(W,\varepsilon, d)\) and
		\(\overline{D}_{\mathrm{int}}^{\mathrm{pre}}(W,\varepsilon, d)\) is enough to give a negative answer to Question~(2).
		
		Question~(3) asks whether the same interchange of the supremum and the limiting operation remains valid when the
		\(\varepsilon\)-stable set \(\Gamma_\varepsilon^{\{F_n\}_{n\in\mathbb{N}}}(\cdot)\) is replaced by the asymptotic-version of stable set.
		We now make this replacement precise.
		For \(w\in W\), define
		\[
		\Gamma_{\mathrm{as}}(w)
		:=
		\left\{z\in W:\lim_{n\to\infty}d(T^n z,T^n w)=0\right\}.
		\]
		Replacing \(\Gamma_\varepsilon^{\{F_n\}_{n\in\mathbb{N}}}(\cdot)\) by \(\Gamma_{\mathrm{as}}(\cdot)\) in the definitions in \((\ref{def})\) gives the asymptotic counterparts
		\[
		\overline{D}_{\mathrm{int}}^{\mathrm{as}}(W,d),
		\quad
		\underline{D}_{\mathrm{int}}^{\mathrm{as}}(W,d),
		\quad
		\underline{D}_{\mathrm{ext}}^{\mathrm{as}}(W,d),
		\quad
		\overline{D}_{\mathrm{ext}}^{\mathrm{as}}(W,d).
		\]
		By interchanging the order of the supremum and the limiting operations and then by the localization formula for the asymptotic-version of stable set \cite[Theorem~1.5]{Yang-Chen-Zhou2024}, we have
		\[
		\begin{aligned}
			&\overline{D}_{\mathrm{ext}}^{\mathrm{as}}(W, d)
			\leq
			\overline{D}_{\mathrm{int}}^{\mathrm{as}}(W, d)
			= \overline{\mathrm{mdim}}_{M}(W,d).
		\end{aligned}
		\]
		A strict gap between
		\(\overline{D}_{\mathrm{ext}}^{\mathrm{as}}(W, d)\) and
		\(\overline{D}_{\mathrm{int}}^{\mathrm{as}}(W, d)\), along with Proposition~\ref{prop20}~(3), will give a negative answer to Question~(3).

		We will prove that the initially constructed shift space \(X\) also satisfies the analogue of \((\ref{first-step-separation})\)
		for both the preimage and the asymptotic versions of stable sets, giving negative answers to \cite[\S 6, Questions~(2) and~(3)]{Yang-Chen-Zhou2024}. In fact, what we can show is that, after passing to a suitable product system
		\(X\times Y_E\), a complete separation of the corresponding four quantities will be similarly obtained. For details see Proposition \ref{answer} and Remark \ref{last}.
	\end{remark}
	
	Let us now examine the details of this process more closely as follows.
	
	\subsection{The shift space $(X,\sigma)$.} \
	
	We begin by constructing the space \(X\) as a disjoint union of levels
		\(X^{(n)}\). The guiding idea is that different levels $X^{(n)}$ are designed to
		carry different amounts of local topological complexity. More precisely,
		for a point \(x\in X^{(n)}\), the complexity of its \(\varepsilon\)-stable
		set \(\Gamma_\varepsilon(x)\) is governed by the level \(n\). As the scale changes, the level which contributes most to the local entropy
		may also change. This dependence on both \(n\) and scale is the key
		mechanism behind the failure of interchanging the supremum and the
		small-scale \(\limsup\).

Let $([0,2]^{\mathbb Z},\sigma)$ be the shift system, where $\sigma$ denotes the left shift, that is, $(\sigma x)_i = x_{i + 1}$ for all $x= (x_{n})_{n\in\mathbb{Z}}\in [0,2]^{\mathbb Z}$ and any $i\in \mathbb{Z}$. Let $d$ be a compatible metric given by
	\begin{equation}\label{metric}
		d((x_{n})_{n\in\mathbb{Z}},(y_{n})_{n\in\mathbb{Z}})=\sup_{n\in\mathbb{Z}}\frac{|x_{n}-y_{n}|}{2^{|n|}}.
	\end{equation}
	The required $X\subset [0,2]^{\mathbb{Z}}$ is a compact invariant subset constructed as follows.
	First we set
	\[
	X^{(0)}
	= \{0^{\mathbb{Z}}\}
	\;\sqcup\;
	\left\{ x \in \{0,2\}^{\mathbb{Z}} :
	\text{ uniquely }\exists\, i \in \mathbb{Z} \text{ with }
	x_i = 2 \text{ and } x_j = 0 \text{ for all } j \neq i
	\right\}.
	\]
	For each $n\in\mathbb{N}$, we let $G_n$ be a finite equidistant set on $(2^{-n},\,2^{-n+1}]$ with spacing $2^{-n^{2}}$:
	\[
	G_n=\left\{\,2^{-n}+k\cdot2^{-n^{2}}:\ k=1,\cdots,2^{n^{2}-n}\right\} \subset (2^{-n},\,2^{-n+1}] \subset (0, 1].
	\]
	We also fix a strictly increasing sequence $\{P_m\}_{m\in\mathbb{N}}$ in $\mathbb{N}$.
	For every $n \in \mathbb{N}$, put
	\begin{equation}\label{X_n}
		X_{n}=\Bigl\{
		x \in (G_n \cup \{2\})^{\mathbb{Z}} :
		x_i = 2 \text{ whenever } i \equiv 0\;(\mathrm{mod}\, P_n)
		\ \text{and } x_i \in G_n \text{ otherwise}
		\Bigr\}.
	\end{equation}
	Then we define
	\[
	X := \bigcup_{n \ge 0} X^{(n)}\subset [0,2]^{\mathbb{Z}}\ \ \ \ \ \text{where}\ \ \ \ \
	X^{(n)}
	= \bigcup_{i \in \mathbb{Z}} \sigma^{i} X_n
	= \bigsqcup_{i=0}^{P_n - 1} \sigma^{i} X_n\ \text{for each}\ n\in \mathbb{N}.
	\]

	\begin{prop}\label{prop-shiftinv}
		The constructed $(X,\sigma)$ is a subshift in $([0,2]^{\mathbb Z},\sigma)$.
	\end{prop}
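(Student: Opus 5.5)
We need to show that $X$ is closed in $[0,2]^{\mathbb Z}$ and that $\sigma X = X$. Invariance is immediate from the definitions. Indeed, $X^{(0)}$ is visibly $\sigma$-invariant, since shifting $0^{\mathbb Z}$ or a single-$2$ sequence gives a sequence of the same type. For $n\ge1$ we have $X^{(n)}=\bigcup_{i\in\mathbb Z}\sigma^iX_n$ by definition, and $\sigma^{P_n}X_n=X_n$ because the constraint ``$x_i=2$ iff $i\equiv0 \pmod{P_n}$'' is preserved under a shift by $P_n$. Hence $\sigma X^{(n)}=X^{(n)}$ and $\sigma X=X$. The remaining task, and the only real one, is closedness.

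We first show that each $X^{(n)}$, $n\ge1$, is closed. The set $X_n$ is a product $\prod_{i}K_i$ with $K_i=\{2\}$ or $K_i=G_n$, each finite. So $X_n$ is compact in the product topology, which $d$ induces. Then $X^{(n)}$ is a finite union of continuous images $\sigma^iX_n$, $0\le i<P_n$, hence compact. Likewise $X^{(0)}$ is closed. Any limit of single-$2$ sequences whose $2$ sits at position $i_k$ is either a single-$2$ sequence (if $i_k$ stabilizes) or $0^{\mathbb Z}$ (if $|i_k|\to\infty$). Both lie in $X^{(0)}$.

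The main step is to handle a sequence $x^{(k)}\in X^{(n_k)}$ with $n_k\to\infty$; after passing to a subsequence this is the only remaining case. We claim every limit point lies in $X^{(0)}$. The key observation is that every non-$2$ coordinate of a point in $X^{(n)}$ lies in $G_n\subset(2^{-n},2^{-n+1}]$, so it is at most $2^{-n+1}\to0$. The $2$'s of $x^{(k)}$ occur along a residue class modulo $P_{n_k}$, and since $\{P_m\}$ is strictly increasing, $P_{n_k}\to\infty$. Suppose $x^{(k)}\to y$ coordinatewise. Each coordinate $y_j$ is the limit of $x^{(k)}_j\in(0,2^{-n_k+1}]\cup\{2\}$, so $y_j\in\{0,2\}$. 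Fix two coordinates $j\ne j'$. For large $k$, $P_{n_k}>|j-j'|$, so $x^{(k)}_j$ and $x^{(k)}_{j'}$ cannot both equal $2$. Hence $y$ has at most one coordinate equal to $2$, with all others $0$, and therefore $y\in X^{(0)}$.

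Combining the two steps, any convergent sequence in $X$ either has infinitely many terms in a single level $X^{(n)}$, which is closed, or it falls under the case above. So $X$ is closed and $\sigma$-invariant, i.e. a subshift of $([0,2]^{\mathbb Z},\sigma)$. I expect no serious obstacle. The only point needing care is that convergence in $d$ is equivalent to coordinatewise convergence, which holds because of the weights $2^{-|n|}$ in \eqref{metric}, together with the use of $P_{n}\to\infty$ in the last case.
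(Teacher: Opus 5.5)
Your proof is correct and follows the paper's argument. You use the same dichotomy between a subsequence staying in one closed level $X^{(n)}$ and one with $n_k\to\infty$, where the limit is forced into $X^{(0)}$ because non-$2$ coordinates tend to $0$ and $P_{n_k}\to\infty$ keeps any two $2$'s apart. Your pairwise argument for ``at most one $2$'' is only a symmetric rephrasing of the paper's argument around a fixed coordinate $x_{i_0}=2$, and you also spell out the closedness of $X^{(n)}$ and $X^{(0)}$, which the paper treats as trivial.
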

	\begin{proof}
		The shift invariance of $X$ (that is, $\sigma (X)= X$) follows directly from that of $X^{(n)}$ for each $n\geq 0$.
		It remains to prove that $X\subset [0, 2]^\mathbb{Z}$ is a closed subset. That is, if we let $\{x^{(m)}\}_{m \in \mathbb{N}} \subset X$ be a sequence which converges to $x$ in $[0, 2]^\mathbb{Z}$, then we need to show $x \in X$.
		
		If infinitely many terms $x^{(m)}$ belong to $X^{(n)}$ for some common $n \ge 0$,
		then $x \in X^{(n)} \subset X$ already by the closeness of $X^{(n)}$ (it is trivial to see that $X^{(n)}\subset [0, 2]^\mathbb{Z}$ is a closed subset).
		
		Otherwise, there exist sequences $m_k \to \infty$ and $n_k \to \infty$ such that
		$x^{(m_k)} \in X^{(n_k)}$.
		We will prove that in this case $x \in X^{(0)}$.
		As $\{x^{(m_k)}\}_{k\in\mathbb{N}}$ converges in $[0, 2]^\mathbb{Z}$,
		for each fixed $i \in \mathbb{Z}$ there is $K_i \in \mathbb{N}$ such that
		either \(x^{(m_k)}_i = 2\) for all $k \ge K_i$, or $x^{(m_k)}_i \in G_{n_k} \subset (2^{-n_k},\,2^{-n_k+1}]\subset (0, 1]$ for all $k \ge K_i$. Note that $2^{-n_k} \searrow 0$ as $k \to \infty$, it follows that
		$x_i \in \{0,2\}$ for every $i \in \mathbb{Z}$.
		In the following, let us show that, if there exists $i_{0}$ such that $x_{i_{0}}=2$, then $x_{j}=0$ for all $j\neq i_{0}$.
		In fact, since $x_{i_0}=2$, we have $x^{(m_k)}_{i_0}=2$ for all $k \ge K_{i_0}$, and then the definition of $X^{(n_k)}$ implies that for all $k \ge K_{i_0}$,
		$x^{(m_k)}_{i_0+j} \in G_{n_k}$ whenever $1 \le |j| \le P_{n_k}-1$ (as $x^{(m_k)} \in X^{(n_k)}$). In particular, for each fixed $l \ne 0$, we have $|l| \le P_{n_k}-1$ and hence $x^{(m_k)}_{i_0+l} \in G_{n_k}\subset (2^{-n_k},\,2^{-n_k+1}]$ for all sufficiently large $k$ (as $P_{n_k}$ diverges to infinity). As $x^{(m_k)}_{i_0+l} \to x_{i_0+l}$ and
		$2^{-n_k} \searrow 0$, one has $x_{i_0+l}=0$. This finishes the proof.
	\end{proof}

	With $(X,\sigma)$ established as a subshift, we next investigate the structure of its stable sets.
	
	\begin{prop}\label{stableset}
		Let $\varepsilon\in(0,1)$ and $x\in X$.
		\begin{enumerate}
			\item If \(x\in X^{(0)}\), then
			\(
			\Gamma_\varepsilon^{\{F_m\}_{m\in\mathbb{N}}}(x)\subset X^{(0)}\)
			and \(\Gamma_{\mathrm{as}}(x)=X^{(0)}\).
			
			\item If \(x\in \sigma^{P_n-i}X_n\subset X^{(n)}\), where
			\(n\in\mathbb N\) and \(0\leq i<P_n\), then
			\[
			\begin{aligned}
				&\Gamma_\varepsilon^{\{F_m\}_{m\in\mathbb{N}}}(x)\subset \sigma^{P_n-i}X_n,
				\\&\Gamma_{\mathrm{as}}(x)
				=
				\left\{
				y\in \sigma^{P_n-i}X_n:
				\exists\, N\in\mathbb Z \text{ such that } x_k=y_k
				\text{ for all } k\geq N
				\right\}.
			\end{aligned}
			\]
			Moreover, \(\Gamma_\varepsilon^{\{F_m\}_{m\in\mathbb{N}}}(x)=\sigma^{P_n-i}X_n\) whenever
			\(\varepsilon\geq 2^{-n}\).
		\end{enumerate}
	\end{prop}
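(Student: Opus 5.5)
The plan is to read everything off coordinatewise comparisons in the metric \eqref{metric}, using only the $0$-th coordinate of shifted points. Since $F_m=\{0,\dots,m-1\}$ and $\bigcup_m F_m=\mathbb{N}\cup\{0\}$, membership $y\in\Gamma_\varepsilon^{\{F_m\}}(x)$ means $d(\sigma^k x,\sigma^k y)\le\varepsilon$ for all $k\ge0$. In particular $|x_k-y_k|\le\varepsilon<1$ for all $k\ge0$. The basic observation is that the coordinate values fall into three groups: $\{2\}$, $\{0\}$, and $(2^{-n},2^{-n+1}]$ for level $n$. Each nonzero non-$2$ value lies in $(0,1]$, so a coordinate equal to $2$ can only be matched (within distance $<1$) by a coordinate equal to $2$.

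For (1), let $x\in X^{(0)}$; then $x_k\in\{0,2\}$ and at most one coordinate equals $2$. Suppose $y\in X^{(n)}$ with $n\ge1$. Then $y$ has a coordinate $2$ at every position of some residue class mod $P_n$, so $y_k=2$ for infinitely many $k\ge0$. Since $x_k=0$ for all but at most one $k$, we get $|x_k-y_k|=2>\varepsilon$ for some $k\ge0$, a contradiction. Hence $\Gamma_\varepsilon^{\{F_m\}}(x)\subset X^{(0)}$.

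For $\Gamma_{\mathrm{as}}(x)$, first note that every $z\in X^{(0)}$ satisfies $\sigma^k z\to0^{\mathbb Z}$ as $k\to\infty$, because the unique $2$ drifts off to $-\infty$ and the weights $2^{-|n|}$ kill it. So all points of $X^{(0)}$ are asymptotic to each other, giving $X^{(0)}\subset\Gamma_{\mathrm{as}}(x)$. Conversely, if $z\in X^{(n)}$ with $n\ge1$, then $z_k=2$ infinitely often for $k\ge0$, while $x_k=0$ eventually. Thus $d(\sigma^k z,\sigma^k x)\ge|z_k-x_k|=2$ infinitely often, so $z\notin\Gamma_{\mathrm{as}}(x)$.

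For (2), let $x\in\sigma^{P_n-i}X_n$, so the $2$'s of $x$ sit exactly on one residue class $c$ mod $P_n$, and all other coordinates lie in $G_n$. Take $y\in\Gamma_\varepsilon^{\{F_m\}}(x)$.

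1. Matching the $2$'s on the nonnegative positions forces $y_k=2$ at every $k\ge0$ with $k\equiv c$. Since $\varepsilon<1$, $y$ also cannot have a $2$ at a nonnegative position where $x$ has a non-$2$ value.
2. Therefore $y\notin X^{(0)}$, since $y$ has infinitely many $2$'s.
3. If $y\in X^{(n')}$, the nonnegative $2$'s of $y$ form one residue class mod $P_{n'}$. Comparing with step 1 gives $P_{n'}=P_n$, hence $n'=n$ because $\{P_m\}$ is strictly increasing.
4. The residue classes then agree, so $y\in\sigma^{P_n-i}X_n$.

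The step needing most care is step 3: one has to argue, from the equality of the sets of nonnegative indices carrying a $2$, that the period and the phase are both determined.

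For the asymptotic set, suppose $y\in\sigma^{P_n-i}X_n$ with $x_k=y_k$ for all $k\ge N$. Then $d(\sigma^k x,\sigma^k y)\le\sup_{j<N-k}2^{-|j|}\cdot 2\to0$, so $y\in\Gamma_{\mathrm{as}}(x)$. Conversely, let $y\in\Gamma_{\mathrm{as}}(x)$. Then $|x_k-y_k|\to0$, so eventually $|x_k-y_k|<1$, and the argument of steps 1–4 applied to large $k$ gives $y\in\sigma^{P_n-i}X_n$. Both $x$ and $y$ then have their non-$2$ coordinates in the finite set $G_n$, whose points are spaced $2^{-n^2}$ apart. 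Hence $|x_k-y_k|<2^{-n^2}$ for large $k$ forces $x_k=y_k$ eventually.

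Finally, assume $\varepsilon\ge2^{-n}$ and let $y\in\sigma^{P_n-i}X_n$. Then $y$ and every shift of it share the $2$-positions with the corresponding shift of $x$, while the other coordinates of both lie in $G_n\subset(2^{-n},2^{-n+1}]$. So each coordinate difference is at most $2^{-n}$, and after the weights $2^{-|j|}\le1$ we get $d(\sigma^k x,\sigma^k y)\le2^{-n}\le\varepsilon$ for all $k$. This gives equality $\Gamma_\varepsilon^{\{F_m\}}(x)=\sigma^{P_n-i}X_n$.
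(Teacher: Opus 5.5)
Your proof is correct and takes essentially the same approach as the paper's. You compare coordinates in the weighted sup metric and use that, within distance $<1$, a coordinate equal to $2$ can only be matched by a $2$, which rules out other levels and other phases; you make explicit the period-and-phase identification that the paper states tersely. You then use the $2^{-n^2}$-spacing of $G_n$ for the asymptotic stable set, and the bound $|a-b|<2^{-n}$ for $a,b\in G_n$ for the final equality.
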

	\begin{proof}
		We note by the definition \eqref{metric} of the metric $d$ that, for any $x=(x_{n})_{n\in \mathbb{Z}}\in X$,
		\begin{equation}\label{def-stable}
			\begin{aligned}
				&\Gamma_{\varepsilon}^{\{F_m\}_{m\in\mathbb{N}}}(x)\subset\left\{y\in X: |x_{l}-y_{l}|\leq \varepsilon \text{ for all }l\geq 0\right\},
				\\ &\Gamma_{\mathrm{as}}(x)=\left\{y\in X: \lim_{l\to\infty}|x_{l}-y_{l}|=0\right\}.
			\end{aligned}
		\end{equation}
		
		We assume firstly that $x\in X^{(0)}$. Then the sequence $x$ contains the symbol $2$ at most once, and all other coordinates are equal to $0$. For any \(y\in X^{(0)}\), there exists  \(N\in\mathbb N\) such that
		\(x_m=y_m=0\) for all \(m\geq N\), and hence $X^{(0)}\subset \Gamma_{\mathrm{as}}(x)$.
		On the other hand, if $y\in X^{(n)}$ for some $n\in\mathbb{N}$, then the symbol $2$ occurs infinitely many times in $y$, and hence there exist infinitely many indices $m\in\mathbb{N}$ with
		$|x_m-y_m|=2>\varepsilon$.
		Therefore, by \eqref{def-stable}, this shows that $\Gamma_{\varepsilon}^{\{F_k\}_{k\in\mathbb{N}}}(x)\subset X^{(0)}$  and $\Gamma_{\mathrm{as}}(x)=X^{(0)}$.
		
		Now suppose $x\in \sigma^{P_n-i}X_n$ for some
		$i\in\{0,1, \cdots,P_n-1\}$.
		By \eqref{X_n} and \eqref{def-stable} one has
		\[
		\Gamma^{\{F_k\}_{k\in\mathbb{N}}}_{\varepsilon}(x)
		\subset
		\Bigl\{
		y\in(G_n\cup\{2\})^{\mathbb Z} :
		\begin{aligned}
			&	y_j=2 \text{ for } j\ge 0\ \text{such that}\ j \equiv i~(\text{mod } P_n) \\
			& |y_j-x_j|\le\varepsilon \text{ for all other $j\geq 0$}
		\end{aligned}
		\Bigr\},
		\]
		as $|2-a|\ge 1>\varepsilon$ for all $a\in G_n$.
		Assume that $z\in X^{(m)}$ for some $m\ne n$.
		If $m=0$, then there exist infinitely many indices $l\in\mathbb{N}$ with
		$|x_l-z_l|=2>\varepsilon$, and so $z\notin \Gamma^{\{F_k\}_{k\in\mathbb{N}}}_{\varepsilon}(x)\cup \Gamma_{\mathrm{as}}(x)$ follows from (\ref{def-stable}).
		If $m\in\mathbb{N}$, as the symbol $2$ occurs with different periods in $x$ and $z$,
		there exist infinitely many indices $l\in\mathbb{N}$ such that, either
		$x_l=2$ and $z_l\in G_m$ or
		$x_l\in G_n$ and $z_l=2$, hence $|x_l-z_l|\ge 1>\varepsilon$, and so $z\notin \Gamma^{\{F_k\}_{k\in\mathbb{N}}}_{\varepsilon}(x)\cup \Gamma_{\mathrm{as}}(x)$.
		Therefore,
		\[
		\Gamma^{\{F_k\}_{k\in\mathbb{N}}}_{\varepsilon}(x)\cup \Gamma_{\mathrm{as}}(x)\subset X\backslash\bigsqcup_{m\neq n} X^{(m)}=X^{(n)}.
		\]
		Moreover, from the defining positions of the symbol $2$,
		we could conclude
		\[
		\Gamma^{\{F_k\}_{k\in\mathbb{N}}}_{\varepsilon}(x)\cup \Gamma_{\mathrm{as}}(x)\subset \sigma^{P_n-i}X_n.
		\] Using again (\ref{def-stable}) and by the discreteness of $\sigma^{P_n-i}X_n$ we easily obtain
		\[
		\Gamma_{\mathrm{as}}(x)=\left\{
		y\in \sigma^{P_n-i}X_n:
		\exists\, N\in\mathbb Z \text{ such that } x_k=y_k
		\text{ for all } k\geq N
		\right\}.
		\]
		
		If additionally we assume $\varepsilon \ge 2^{-n}$, then for any
		$y\in \sigma^{P_n-i}X_n$ we have that, for each $k\in \mathbb{Z}$, either $x_k = y_k = 2$ or both $x_k$ and $y_k$ are contained in $G_n$, in particular,
		\(
		|x_k-y_k|\le 2^{-n}\le \varepsilon,
		\)
		and hence
		$
		\sigma^{P_n-i}X_n \subset \Gamma^{\{F_k\}_{k\in\mathbb{N}}}_{\varepsilon}(x),
		$
		consequently
		\(
		\Gamma^{\{F_k\}_{k\in\mathbb{N}}}_{\varepsilon}(x)=\sigma^{P_n-i}X_n.
		\)
	\end{proof}
	
	We now estimate for each \(\varepsilon\in(0,1)\) the quantities
	\(\overline{D}_{\mathrm{int}}(X,\varepsilon,d),\)
	\(\underline{D}_{\mathrm{int}}(X,\varepsilon,d)\),
	\(\underline{D}_{\mathrm{ext}}(X,\varepsilon,d)\) and
	\(\overline{D}_{\mathrm{ext}}(X,\varepsilon,d)\), together with their preimage and asymptotic stable-set counterparts.

	\begin{prop}\label{G=0}
		\(
		\overline{D}_{\mathrm{int}}(X,\varepsilon,d)
		=
		\underline{D}_{\mathrm{int}}(X,\varepsilon,d)
		= 1 > 0 =
		\overline{D}_{\mathrm{ext}}(X,\varepsilon,d)
		=
		\underline{D}_{\mathrm{ext}}(X,\varepsilon,d).
		\)
		The same conclusion holds for both the preimage and asymptotic counterparts.
	\end{prop}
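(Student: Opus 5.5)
The plan is to compute the four quantities separately for the ``ext'' pair and the ``int'' pair. Throughout I use Proposition \ref{stableset}, which puts every stable set inside a single piece $\sigma^{j}X_n$ or inside $X^{(0)}$.

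\emph{The ext quantities vanish.} Fix $\varepsilon\in(0,1)$ and $x\in X$.

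If $x\in X^{(0)}$, then by Proposition \ref{stableset}(1) the stable set $\Gamma_\varepsilon^{\{F_m\}}(x)$ lies in $X^{(0)}$. This set is countable with a single limit point $0^{\mathbb Z}$. For every $\delta>0$, $s_{F_m}(X^{(0)},\delta,d)$ grows at most linearly in $m$: it counts the positions of the unique symbol $2$ within a window of length about $m+\log_2(1/\delta)$. Hence $h_{\mathrm{top}}(\Gamma_\varepsilon^{\{F_m\}}(x),\delta)=0$.

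If $x\in\sigma^{P_n-i}X_n$, then $\Gamma_\varepsilon^{\{F_m\}}(x)\subset\sigma^{P_n-i}X_n$. Every point of this set has coordinates in the finite alphabet $G_n\cup\{2\}$, with $|G_n|=2^{n^2-n}$. A $(F_m,\delta)$-separated set is therefore determined by the coordinates in $[-c(\delta),m+c(\delta)]$, where $c(\delta)\approx\log_2(2/\delta)$. This gives the bound
\[
s_{F_m}(\sigma^{P_n-i}X_n,\delta,d)\le |G_n|^{m+2c(\delta)+1},
\]
so that $h_{\mathrm{top}}(\Gamma_\varepsilon^{\{F_m\}}(x),\delta)\le (n^2-n)\log 2$ uniformly in $\delta$. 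Dividing by $\log\frac1\delta$ and letting $\delta\to0$ gives $0$ for every $x$. Hence $\overline{D}_{\mathrm{ext}}=\underline{D}_{\mathrm{ext}}=0$.

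\emph{The ext counterparts.} The same argument applies to the preimage quantities, because $\sigma^{-k}\Gamma_\varepsilon^{\{F_m\}}(x)$ again lies in a single $\sigma^{j'}X_n$ (or in $X^{(0)}$) and the alphabet bound is unchanged. It also applies to the asymptotic quantities, because $\Gamma_{\mathrm{as}}(x)$ lies in the same piece by Proposition \ref{stableset}.

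\emph{The int quantities equal $1$.} By the paragraph preceding Theorem \ref{count} (which combines Theorem \ref{thm6}, its footnote, and the Remark on $\Gamma_\varepsilon^{\{F_n\}}$), it suffices to show
\[
\overline{\mathrm{mdim}}_M(X,d)=\underline{\mathrm{mdim}}_M(X,d)=1.
\]

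For the upper bound $\overline{\mathrm{mdim}}_M\le 1$, cover $[0,2]$ by about $2/\delta$ intervals of length $\delta$. Only the coordinates in the window $[-c(\delta),m+c(\delta)]$ matter at scale $\delta$, so
\[
r_{F_m}(X,\delta,d)\le (4/\delta)^{m+2c(\delta)+1},
\]
and hence $h_{\mathrm{top}}(X,\delta)\le\log(4/\delta)$.

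For the lower bound, for small $\delta$ choose $n=n(\delta)$ with $2^{-n^2}\le\delta<2^{-(n-1)^2}$, so that $n\sim\sqrt{\log_2(1/\delta)}$. In $X_n$, consider the points whose free coordinates $x_k$ ($k\not\equiv0 \bmod P_n$, $0\le k<m$) take values in a sub-grid of $G_n$ with spacing greater than $\delta$. Their number is at least $\bigl(2^{-n}/(2\delta)\bigr)^{m(1-1/P_n)}$, and they are $(F_m,\delta)$-separated because the coordinate at shift $0$ carries weight $1$. This yields
\[
\frac{h_{\mathrm{top}}(X,\delta)}{\log\frac1\delta}\ge\Bigl(1-\frac1{P_n}\Bigr)\frac{\log\frac1\delta-(n+1)\log 2}{\log\frac1\delta},
\]
and the right-hand side tends to $1$ as $\delta\to0$, since $n=o(\log\frac1\delta)$ and $P_n\to\infty$. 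Because this is a genuine limit, both the $\limsup$ and the $\liminf$ equal $1$.

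\emph{The int counterparts.} For the asymptotic version, I would invoke \cite[Theorem 1.5]{Yang-Chen-Zhou2024} as recorded in Remark \ref{rem:other-questions}, which gives $\overline{D}^{\mathrm{as}}_{\mathrm{int}}=\overline{\mathrm{mdim}}_M=1$; the liminf version is handled by the same theorem's lower variant. For the preimage version, the upper bound $\le 1$ is already recorded in Remark \ref{rem:other-questions}. For the lower bound, I would take $x\in X_{n(\delta)}$, which is allowed once $2^{-n(\delta)}\le\varepsilon$. Then $\Gamma_\varepsilon^{\{F_m\}}(x)$ is the whole piece $X_{n(\delta)}$ by Proposition \ref{stableset}, and its preimage under the shift is again a whole piece $\sigma^{j}X_{n(\delta)}$. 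The separated-set count above therefore applies verbatim, giving $\sup_x\Phi_\varepsilon(x,\delta)/\log\frac1\delta\to1$.

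\emph{Main obstacle.} I expect the delicate step to be the asymptotic int counterpart. The sets $\Gamma_{\mathrm{as}}(x)$ differ from $x$ only in finitely many forward coordinates, so a direct forward separated-set count looks degenerate. I therefore plan to rely on the cited localization formula rather than a direct count. I would also check carefully that its normalization (forward window together with the two-sided metric) matches the one used here.
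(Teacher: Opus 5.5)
Your treatment of the four ext quantities, of $\overline{D}_{\mathrm{int}},\underline{D}_{\mathrm{int}}$, and of the preimage int quantities is sound. For the $\varepsilon$-stable int pair your route differs from the paper's. You invoke Theorem \ref{thm6} (with its $\liminf$ footnote and the remark on $\Gamma_\varepsilon^{\{F_n\}_{n\in\mathbb N}}$) to reduce to $\overline{\mathrm{mdim}}_M(X,d)=\underline{\mathrm{mdim}}_M(X,d)=1$. The paper never uses the localization theorem here. It counts directly inside one piece $X_m$ with $\Gamma_\varepsilon^{\{F_l\}_{l\in\mathbb N}}(x_\delta)=X_m$, which is exactly the count you use for the preimage version, so the detour through Theorem \ref{thm6} is valid but unnecessary.

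The gap is the lower bound $\underline{D}^{\mathrm{as}}_{\mathrm{int}}(X,d)\ge 1$. The only localization statement for asymptotic stable sets in the paper is the $\limsup$ identity $\overline{D}^{\mathrm{as}}_{\mathrm{int}}=\overline{\mathrm{mdim}}_M$, quoted from \cite[Theorem~1.5]{Yang-Chen-Zhou2024} in Remark \ref{rem:other-questions}. No $\liminf$ analogue is recorded, so your appeal to ``the same theorem's lower variant'' is unsupported. Without it you only have the trivial bound $\underline{D}^{\mathrm{as}}_{\mathrm{int}}\le 1$.

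Your reason for avoiding a direct count is also mistaken. Each point of $\Gamma_{\mathrm{as}}(x)$ differs from $x$ in finitely many forward coordinates, but not uniformly over the set. For $x\in X_{n(\delta)}$ the set contains every $y\in X_{n(\delta)}$ with $y_j=x_j$ for all $j\ge m$, since such $y$ satisfy $d(\sigma^k y,\sigma^k x)\le 2^{-(k-m)}\to 0$. These $y$ realize every cylinder of $X_{n(\delta)}$ over $F_m$. So you can place your sub-grid family inside $\Gamma_{\mathrm{as}}(x)$ by copying the coordinates of $x$ outside $[0,m)$. Your separated-set estimate then gives $\sup_x h_{\mathrm{top}}(\Gamma_{\mathrm{as}}(x),\delta)/\log\frac1\delta\to 1$ as a genuine limit. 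This is precisely the paper's argument (the sets $E_{m,k}$). Together with the trivial bound $\overline{D}^{\mathrm{as}}_{\mathrm{int}}\le\overline{\mathrm{mdim}}_M(X,d)\le 1$ from $\Gamma_{\mathrm{as}}(x)\subset X$, it settles both asymptotic int quantities without any external localization formula.
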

	\begin{proof}
		For convenience we set $\Gamma (x) = \Gamma_{\varepsilon}^{\{F_n\}_{n\in\mathbb{N}}}(x)\cup \Gamma_{\mathrm{as}}(x)$ for each $x\in X$.
		
		To prove \(\overline{D}_{\mathrm{ext}}(X,\varepsilon,d)
		= \overline{D}_{\mathrm{ext}}^{\mathrm{pre}}(X,\varepsilon,d)
		=\overline{D}_{\mathrm{ext}}^{\mathrm{as}}(X,d)=0\),
		it suffices to prove
		\begin{equation}\label{perpoint1}
			\limsup_{\delta\to 0}
			\frac{h_{\mathrm{top}}(\Gamma (x),\delta,\{F_n\}_{n\in\mathbb N},d)}
			{\log\frac{1}{\delta}}
			=0\quad\text{and}\quad \limsup_{\delta\to 0}\frac{\Phi_{\varepsilon}(x,\delta,d)}{\log \frac{1}{\delta}}=0
		\end{equation}
		for every $x\in X$. We shall verify that for each $x\in X$
		\begin{equation*}\label{finite-ent}
			\infty>	h_{\mathrm{top}}(\Gamma (x),\{F_n\}_{n\in\mathbb N})=\sup_{\delta>0}h_{\mathrm{top}}(\Gamma (x),\delta,\{F_n\}_{n\in\mathbb N},d)\ \text{and}\
			\infty>\sup_{\delta>0} \Phi_{\varepsilon}(x,\delta,d).
		\end{equation*}
		Assume $x\in X^{(n)}$ for some $n\geq 0$, then \(
		\Gamma (x)\subset X^{(n)}
		\subset (G_n\cup\{0,2\})^{\mathbb Z}
		\)
		by Proposition~\ref{stableset}  (set $G_{0}=\emptyset$ by convention). Moreover, due to the shift invariance of $X^{(n)}$, we have for all $k\in\mathbb{Z}$, $\sigma^{-k}\Gamma^{\{F_m\}_{m\in\mathbb{N}}}_{\varepsilon}(x)\subset X^{(n)}
		\subset (G_n\cup\{0,2\})^{\mathbb Z}$.
		Hence, by the definition of $\Phi_{\varepsilon}(x,\delta,d)$,
		\begin{equation}\label{estimateGn}
			\begin{aligned}
				&h_{\mathrm{top}}(\Gamma (x),\{F_n\}_{n\in\mathbb N})\le
				h_{\mathrm{top}}\left((G_n\cup\{0,2\})^{\mathbb Z},\{F_n\}_{n\in\mathbb N}\right)	=
				\log(|G_n|+2)< \infty.
				\\&\sup_{\delta>0} \Phi_{\varepsilon}(x,\delta,d)\leq \sup_{\delta>0}\limsup_{n\to\infty}
				\frac{1}{|F_n|}
				\log s_{F_n}
				\bigl(
				(G_n\cup\{0,2\})^{\mathbb Z},
				\delta,d
				\bigr)	=
				\log(|G_n|+2)< \infty.
			\end{aligned}
		\end{equation}
		
		Now let us prove  \(\min\left\{\underline{D}_{\mathrm{int}}(X,\varepsilon,d),\underline{D}_{\mathrm{int}}^{\mathrm{pre}}(X,\varepsilon,d),\underline{D}_{\mathrm{int}}^{\mathrm{as}}(X,d)
		\right\}
		\geq 1\).
		Fix $n\in\mathbb{N}$ sufficiently large such that $2^{-n}\leq \varepsilon$. Given any $\delta\leq 2^{-(n+1)^{2}}$, there exists $m=m_{\delta}\geq n$ such that
		\begin{equation}\label{delta-bound}
			\delta\in (2^{-(m+2)^{2}},2^{-(m+1)^{2}}]\ \ \text{ (here $m$ depends on the parameter $\delta$)}.
		\end{equation}
		Pick arbitrarily a point $x_{\delta}\in X_{m}$ and fix it.
		\begin{enumerate}
			
			\item
			By $\varepsilon\geq 2^{-n} \geq2^{-m}$ and Proposition \ref{stableset} (2) one has
			\(
			\Gamma^{\{F_l\}_{l\in\mathbb{N}}}_{\varepsilon}(x_{\delta})=X_{m}.
			\)
			
			\item For each $k\in\mathbb N$, let $\mathcal{C}_{m,k}$ denote the collection of all
			cylinder sets in $X_m$ obtained by fixing the coordinates on
			\(
			F_{kP_m}=\{0,1,\cdots,kP_m-1\}.
			\)
			Every such cylinder has length $kP_m$ and
			\(
			|\mathcal{C}_{m,k}|
			=
			|G_m|^{k(P_m-1)},
			\)
			since among these coordinates exactly $k$ positions are fixed to be the symbol $2$,
			while each of the remaining $k(P_m-1)$ positions may take arbitrary values in $G_m$. We also note that, by Proposition~\ref{stableset} (2),
			\[
			\begin{aligned}
				\Gamma_{\mathrm{as}}(x_\delta) \supset	\left\{
				y\in X_m:
				(x_\delta)_j=y_j
				\text{ for all } j\geq kP_m
				\right\},
			\end{aligned}
			\]
			then \(	\Gamma_{\mathrm{as}}(x_\delta)\) has a non-empty intersection with each element of \(\mathcal{C}_{m,k}\). For any \(C\in\mathcal{C}_{m,k}\), choose a point from
			$
			\Gamma_{\mathrm{as}}(x_\delta)\cap C
			$
			and denote the resulting set by \(E_{m,k}\).
			By the construction of $G_m$, $E_{m,k}\subset \Gamma_{\mathrm{as}}(x_\delta)\subset X_m$ is $(F_{kP_m},2^{-m^2-1})$-separated.
		\end{enumerate}
		By (\ref{delta-bound}) we have $\delta\leq 2^{-m^{2}-1}$, and then together with the  $\sigma^{P_m}$-invariance of $X_{m}$ one obtains
		\begin{equation*}\label{k-estimate}
			\begin{aligned}
				&s_{F_{kP_{m}}}(\Gamma^{\{F_l\}_{l\in\mathbb{N}}}_{\varepsilon}(x_{\delta}),\delta,d)=s_{F_{kP_{m}}}(X_{m},\delta,d)\geq s_{F_{kP_{m}}}(X_{m},2^{-m^{2}-1},d)\geq|E_{m, k}|= |G_{m}|^{k(P_{m}-1)},
				\\&s_{F_{kP_{m}}}(\sigma^{-kP_m}\Gamma^{\{F_l\}_{l\in\mathbb{N}}}_{\varepsilon}(x_{\delta}),\delta,d)=s_{F_{kP_{m}}}(X_{m},\delta,d)\geq  |G_{m}|^{k(P_{m}-1)},
				\\&s_{F_{kP_{m}}}(\Gamma_{\mathrm{as}}(x_{\delta}),\delta,d)\geq s_{F_{kP_{m}}}(\Gamma_{\mathrm{as}}(x_{\delta}),2^{-m^{2}-1},d)\geq |E_{m,k}|= |G_{m}|^{k(P_{m}-1)}.
			\end{aligned}
		\end{equation*}
		Consequently, combined with (\ref{delta-bound}) we obtain
		\[
		\begin{aligned}
			&\frac{\min\left\{h_{\mathrm{top}}(\Gamma^{\{F_n\}_{n\in\mathbb{N}}}_{\varepsilon}(x_{\delta}),\delta,\{F_n\}_{n\in\mathbb{N}},d),\Phi_{\varepsilon}(x_{\delta},\delta,d),h_{\mathrm{top}}(\Gamma_{\mathrm{as}}(x_{\delta}),\delta,\{F_n\}_{n\in\mathbb{N}},d)\right\}}{\log \frac{1}{\delta}}
			\\\geq&
			\frac{\left(1-\frac{1}{P_{m}}\right)\cdot\log|G_{m}|}{\log \frac{1}{\delta}}
			\geq
			\frac{\left(1-\frac{1}{P_{m}}\right)(m^{2}-m)}{(m+2)^2}.
		\end{aligned}
		\]
		Since $m = m_\delta\to\infty$ as $\delta\to 0$
		and $\{P_m\}_{m\in\mathbb{N}}\subset \mathbb{N}$ is strictly increasing,
		we conclude that
		\begin{equation*}
			\min\left\{\underline{D}_{\mathrm{int}}(X,\varepsilon,d),\underline{D}_{\mathrm{int}}^{\mathrm{pre}}(X,\varepsilon,d),\underline{D}_{\mathrm{int}}^{\mathrm{as}}(X,d)
			\right\}
			\geq  1.
		\end{equation*}

		It remains to prove
		\(\max \left\{	\overline{D}_{\mathrm{int}}(X,\varepsilon,d),\overline{D}_{\mathrm{int}}^{\mathrm{pre}}(X,\varepsilon,d),\overline{D}_{\mathrm{int}}^{\mathrm{as}}(X,d)\right\}\le 1\) by establishing
		\begin{equation*}\label{sup-estimate}
			\max\left\{\sup_{x\in X}h_{\mathrm{top}}(\Gamma (x),\delta,\{F_n\}_{n\in\mathbb{N}},d),\sup_{x\in X}\Phi_{\varepsilon}(x,\delta,d)\right\}
			\leq
			\log \frac{4}{\delta}\ \ \text{for each $\delta\in(0,\frac{1}{2})$}.
		\end{equation*}

		Let $x\in X$. The above inequality holds trivially for $x\in X^{(0)}$, since by Proposition~\ref{stableset}
		\[
		\begin{aligned}
			&h_{\mathrm{top}}(\Gamma (x),\delta,\{F_n\}_{n\in\mathbb{N}},d)
			\leq h_{\mathrm{top}}(\{0,2\}^{\mathbb{Z}},\{F_n\}_{n\in\mathbb{N}})=\log 2\leq
			\log \frac{1}{\delta},
			\\&\Phi_{\varepsilon}(x,\delta,d)\leq 	 h_{\mathrm{top}}(\{0,2\}^{\mathbb{Z}},\{F_n\}_{n\in\mathbb{N}})=\log 2\leq \log \frac{1}{\delta}.
		\end{aligned}
		\]
		Hence it remains to consider the case where
		$x\in \sigma^{i}X_m$ for some $m\in\mathbb{N}$ and $i\in\{0, 1, \cdots,$ $P_m-1\}$.
		In this situation, Proposition \ref{stableset} yields
		\(\Gamma (x)\subset \sigma^{i}X_m\subset (G_m\cup\{2\})^{\mathbb Z}
		\) and $\sigma^{-p}\Gamma^{\{F_n\}_{n\in\mathbb{N}}}_{\varepsilon}(x)\subset \sigma^{i-p}X_{m}\subset (G_m\cup\{2\})^{\mathbb Z}$ for every $p\in\mathbb{Z}$.
		We distinguish two cases.
		
		If $2^{m^{2}-m}<\frac{1}{\delta}$, then by \eqref{estimateGn} one has
		\[
		\max \left\{h_{\mathrm{top}}(\Gamma (x),\delta,\{F_n\}_{n\in\mathbb{N}},d), \Phi_{\varepsilon}(x,\delta,d)\right\}
		\le	\log(|G_m|+1)
		=
		\log\bigl(2^{m^{2}-m}+1\bigr)
		<
		\log\frac{2}{\delta}.
		\]
		
		Now we assume $2^{m^{2}-m}\ge \frac{1}{\delta}$, and set
		\(
		M=\lfloor \delta\cdot 2^{m^{2}-m}\rfloor+1\ge \delta\cdot 2^{m^{2}-m}.
		\)
		Define
		\[
		G_{m,M}
		=
		\left\{
		2^{-m}+Mk\cdot 2^{-m^{2}}
		:
		k=1,\cdots,\left\lfloor\frac{2^{m^{2}-m}}{M}\right\rfloor
		\right\}
		\subset G_m,
		\]
		where $\lfloor \frac{2^{m^{2}-m}}{M}\rfloor\geq 1$ follows from the assumption that $\delta<\frac{1}{2}$ and $2^{m^{2}-m}\ge \frac{1}{\delta}$ and the construction of \(
		M=\lfloor \delta\cdot 2^{m^{2}-m}\rfloor+1.
		\)
		We also introduce in $X_m$ the following closed subset
		\[
		X_{m,M}
		=
		\Bigl\{
		x \in (G_{m,M}\cup\{2\})^{\mathbb{Z}}
		:
		x_i=2 \text{ whenever } i\equiv 0\  (\mathrm{mod}\ P_m)
		\ \text{and } x_i\in G_{m,M} \text{ otherwise}
		\Bigr\}.
		\]

		For every $l\in\mathbb{N}$ and $j\in\mathbb{Z}$, let $\mathcal{C}^{j}_{m,M,l}$ be the collection of all cylinders
		in $\sigma^{j}X_{m,M}$ determined by fixing the coordinates on
		$\{-m^{2},\cdots,l-1\}$.
		Choose one point from each cylinder in $\mathcal{C}^{j}_{m,M,l}$, and denote the resulting set by $E^{j}_{m,M,l}$.
		We claim that for every $p\in\mathbb{N}$, the set $E^{j}_{m,M,p+m^{2}-m}$ is an $(F_p,2\delta)$-spanning set for $\sigma^{j}X_m$.
		In fact, fix arbitrarily given $p\in\mathbb{N}$ and $y\in \sigma^{j}X_m$, it is trivial to see that there exists $z\in E^{j}_{m,M,p+m^{2}-m}$ such that
		\(
		|z_k-y_k|\le M\cdot 2^{-m^{2}}\le 2\delta
		\)
		for all $-m^{2}\leq k<p+m^{2}-m$. Therefore, for every $n\in F_p$,
		\[
		d(\sigma^{n}z,\sigma^{n}y)
		\le
		\max\bigl\{2\delta,\,2^{-(p+m^{2}-m-1-n)},\,2^{-m^{2}-n}\bigr\}
		\le
		\max\bigl\{2\delta,\,2^{-(m^{2}-m)},\,2^{-m^{2}}\bigr\}
		\le 2\delta.
		\]
		In particular, $E^{i}_{m,M,p+m^{2}-m}$ is an $(F_p,2\delta)$-spanning set of $\sigma^{i}X_m$ and hence for $\Gamma (x)$, and $E^{i-p}_{m,M,p+m^{2}-m}$ is an $(F_p,2\delta)$-spanning set of $\sigma^{i-p}X_m$ and hence for $\sigma^{-p}\Gamma^{\{F_n\}_{n\in\mathbb{N}}}_{\varepsilon}(x)$.
		
		Note that \(|E_{m,M,l}^{j}|	\le |G_{m,M}|^{l+m^{2}}\) for every $l\in \mathbb{N}$ and $j\in\mathbb{Z}$, therefore by \eqref{se-sp} one has
		\begin{eqnarray*}
			& & \max\left\{	h_{\mathrm{top}}(\Gamma(x),4\delta,\{F_n\}_{n\in\mathbb{N}},d),\Phi_{\varepsilon}(x,4\delta,d)\right\}
			\\ & \leq & \limsup\limits_{p\to\infty}\frac{\log |G_{m,M}|^{p+2m^{2}-m}}{|F_p|}
			=
			\log |G_{m,M}|\le
			\log \frac{2^{m^{2}-m}}{M}\leq \log \frac{1}{\delta},
		\end{eqnarray*}
		and hence		$
		\max \left\{	\overline{D}_{\mathrm{int}}(X,\varepsilon,d),\overline{D}_{\mathrm{int}}^{\mathrm{pre}}(X,\varepsilon,d),\overline{D}_{\mathrm{int}}^{\mathrm{as}}(X,d)\right\}
		\le 1.
		$
		This finishes the proof.
	\end{proof}

	\subsection{The auxiliary system $(Y_E,\sigma)$}\

	We now construct the auxiliary shift space $(Y_E,\sigma)$. Let $E\subset \mathbb{Z}$ and define
	\[
	\Lambda_{E}=\{x\in \{0,1\}^{\mathbb{Z}}:x_{i}=0 \text{ whenever }i\in E\}.
	\]
	Equipped with the metric $d$ given by (\ref{metric}), $\Lambda_E$ is a compact metric space.
	Set $
	Y_{E}=\Lambda_{E}^{\mathbb{Z}}$, which is again a compact metric space endowed with the metric $\rho$ given by
	\[
	\rho\bigl((x^{(n)})_{n\in\mathbb{Z}},(y^{(n)})_{n\in\mathbb{Z}}\bigr)
	=\sup_{m\in\mathbb{Z}}\frac{d(x^{(m)},y^{(m)})}{2^{|m|}},
	\quad \ \ \text{where}\ x^{(m)},y^{(m)}\in \Lambda_E\ \text{for each}\ m\in\mathbb{Z}.
	\]
	
	For each \(c\geq 1\), we also define the metric $\rho_{c}$ corresponding to $\rho$ by
	\[
	\rho_c\bigl((x^{(n)})_{n\in\mathbb Z},(y^{(n)})_{n\in\mathbb Z}\bigr)
	:=
	\left(
	\rho\bigl((x^{(n)})_{n\in\mathbb Z},(y^{(n)})_{n\in\mathbb Z}\bigr)
	\right)^\frac{1}{c} .
	\]
	Obviously the metric \(\rho_c\) is compatible with the product topology on \(Y_E\). In particular, \(\rho_1=\rho\). Since  several metrics \(\rho_c\) will be used in this subsection, we shall make the metric explicit in the notation for stable sets. More precisely, we write
	$
	\Gamma_{\varepsilon}^{\{F_n\}_{n\in\mathbb N}}(y,\rho_c)
	\,\text{and}\,
	\Gamma_{\mathrm{as}}(y,\rho_c)
	$
	for the corresponding \(\varepsilon\)-stable set and asymptotic stable set with respect to the metric \(\rho_c\).
	
	In what follows, we record some necessary properties of the $\mathbb{Z}$-action $(Y_E,\sigma)$.
	
	\begin{prop}\label{YE}
		Let \(E\subset \mathbb{Z}\) and equip the system \((Y_E,\sigma)\) with above metric \(\rho_c\). Then:
		\begin{enumerate}
			\item The upper and lower metric mean dimensions are given respectively by
			\[
			\begin{aligned}
				&\overline{\mathrm{mdim}}_M(Y_{E},\{F_{n}\}_{n\in\mathbb{N}},\rho_{c})=
c{\cdot\limsup_{n\to\infty}\frac{\left|\{-n+1,-n+2,\cdots,n-1\}\backslash E\right|}{n}},
				\\&\underline{\mathrm{mdim}}_M(Y_{E},\{F_{n}\}_{n\in\mathbb{N}},\rho_{c})=
c{\cdot\liminf_{n\to\infty}\frac{\left|\{-n+1,-n+2,\cdots,n-1\}\backslash E\right|}{n}}.
			\end{aligned}
			\]
			
			\item For every  finite subset $F\subset \mathbb{Z}$,
			\[
			\begin{aligned}
				\overline{\mathrm{mdim}}_M(Y_{E\cup F},\{F_{n}\}_{n\in\mathbb{N}},\rho_{c})&=\overline{\mathrm{mdim}}_M(Y_{E},\{F_{n}\}_{n\in\mathbb{N}},\rho_{c}),
				\\\underline{\mathrm{mdim}}_M(Y_{E\cup F},\{F_{n}\}_{n\in\mathbb{N}},\rho_{c})&=\underline{\mathrm{mdim}}_M(Y_{E},\{F_{n}\}_{n\in\mathbb{N}},\rho_{c}).
			\end{aligned}		
			\]
			
			\item For every $\varepsilon>0$,
			\[
			\begin{aligned}
				\overline{D}_{\mathrm{int}}(Y_{E},\varepsilon,\rho_{c})&=\overline{D}_{\mathrm{ext}}(Y_{E},\varepsilon,\rho_{c})=\overline{D}_{\mathrm{int}}^{\mathrm{pre}}(Y_{E},\varepsilon,\rho_{c})
				=\overline{D}^{\mathrm{pre}}_{\mathrm{ext}}(Y_{E},\varepsilon,\rho_{c})
				\\&=\overline{D}_{\mathrm{int}}^{\mathrm{as}}(Y_{E},\rho_{c})=\overline{D}^{\mathrm{as}}_{\mathrm{ext}}(Y_{E},\rho_{c})
				=\overline{\mathrm{mdim}}_M(Y_{E},\{F_{n}\}_{n\in\mathbb{N}},\rho_{c}),
				\\				\underline{D}_{\mathrm{int}}(Y_{E},\varepsilon,\rho_{c})&=\underline{D}_{\mathrm{ext}}(Y_{E},\varepsilon,\rho_{c})=
				\underline{D}_{\mathrm{int}}^{\mathrm{pre}}(Y_{E},\varepsilon,\rho_{c})=\underline{D}^{\mathrm{pre}}_{\mathrm{ext}}(Y_{E},\varepsilon,\rho_{c})
				\\&=\underline{D}_{\mathrm{int}}^{\mathrm{as}}(Y_{E},\rho_{c})=\underline{D}^{\mathrm{as}}_{\mathrm{ext}}(Y_{E},\rho_{c})
				=\underline{\mathrm{mdim}}_M(Y_{E},\{F_{n}\}_{n\in\mathbb{N}},\rho_{c}).
			\end{aligned}
			\]
		\end{enumerate}
	\end{prop}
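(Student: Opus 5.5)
The plan has three steps. First, count separated and spanning sets in $(Y_E,\rho)$ at scale $\delta\approx 2^{-k}$ to get (1). Then (2) follows from the density formula. Finally, (3) follows from two facts about stable sets in $Y_E$: they contain a full ``future'' product set, and the past carries no entropy.

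For (1), I would first work with $\rho=\rho_1$. Since $\rho_c=\rho^{1/c}$, a $\delta$-ball for $\rho_c$ is a $\delta^c$-ball for $\rho$, so $h_{\mathrm{top}}(\cdot,\delta,\rho_c)=h_{\mathrm{top}}(\cdot,\delta^c,\rho)$. Dividing by $\log\frac1\delta=\frac1c\log\frac1{\delta^c}$ multiplies every ratio by $c$, so it suffices to treat $c=1$.

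For $\delta\in(2^{-k-1},2^{-k}]$, two points $y,y'$ of $Y_E$ satisfy $\rho(y,y')\le \delta$ roughly iff $y^{(m)}_j=y'^{(m)}_j$ for all $|m|+|j|< k$. This holds because $\rho(y,y')=\sup_{m,j}2^{-|m|-|j|}|y^{(m)}_j-y'^{(m)}_j|$ with entries in $\{0,1\}$. Hence $\rho$ is essentially an ultrametric on the index set $\mathbb{Z}^2$ with weights $2^{-|m|-|j|}$.

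The shift acts on the outer index $m$. The $F_n$-Bowen ball at scale $2^{-k}$ therefore fixes the coordinates $(m,j)$ with $-k<m<n+k$, $|j|<k-\mathrm{dist}(m,[0,n-1])$, and $j\notin E$. Counting free coordinates gives
\[
\log_2 s_{F_n}(Y_E,2^{-k},\rho)= n\,\bigl|\{-k+1,\dots,k-1\}\setminus E\bigr| + O(k^2),
\]
up to a bounded scale shift. Dividing by $n$ and letting $n\to\infty$ gives $h_{\mathrm{top}}(Y_E,2^{-k},\rho)=|\{-k+1,\dots,k-1\}\setminus E|\log 2$. Dividing by $\log 2^{k}$ and taking $\limsup/\liminf$ over $k$ gives (1). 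Intermediate $\delta$ cost only a factor $(k+1)/k$, which tends to $1$.

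For (2), adding a finite set $F$ to $E$ changes the count $|\{-k+1,\dots,k-1\}\setminus E|$ by at most $|F|$, which is negligible after dividing by $k$.

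For (3), the upper bound for all six quantities is immediate. By Theorem~\ref{thm6} and its footnote (applied to $\liminf$), together with the remark on $\Gamma^{\{F_n\}}_\varepsilon$, the int-versions are at most $\overline{\mathrm{mdim}}_M$ and $\underline{\mathrm{mdim}}_M$ respectively, and the ext-versions are at most the int-versions. For the preimage and asymptotic versions I would use the inclusions in $Y_E$ and the trivial bound by the global $h_{\mathrm{top}}(Y_E,\delta)$.

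For the lower bound, fix $y\in Y_E$ and $\varepsilon>0$, and pick $K$ with $2^{-K}<\varepsilon$. Consider the set $Z_y$ of points $z$ that agree with $y$ in all layers $m\le K$ and in all coordinates $|j|\le K$ of every layer $m$, and are arbitrary elsewhere subject to $z\in Y_E$. For every $t\ge0$, $\sigma^t z$ and $\sigma^t y$ then differ only at weights at most $2^{-K}$, so $Z_y\subset\Gamma_\varepsilon^{\{F_n\}}(y,\rho)$. Moreover, $Z_y\subset\Gamma_{\mathrm{as}}(y)$ after also requiring agreement in layers $m\ge N$; in that case one counts only over $F_n$ with $n<N$, using finitely many extra constraints. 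Repeating the counting of (1) inside $Z_y$ shows that the frozen coordinates $|j|\le K$ are a finite set $F$. By (2) these do not change the limits, so each point already realizes $\overline{\mathrm{mdim}}_M$ (resp. $\underline{\mathrm{mdim}}_M$) through its own $\limsup/\liminf$. This gives equality of the ext-versions with the global values. For the preimage version, I would note that $\sigma^{-n}\Gamma_\varepsilon$ contains $\sigma^{-n}Z_y$, which is again of the same product form.

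I expect the main obstacle to be the asymptotic-stable-set version: $\Gamma_{\mathrm{as}}(y)$ is a countable union of sets of the type above, with freedom only in finitely many layers. I would handle this by choosing, for each $n$, the set with $N=n+k$, which has enough freedom on the window $F_n$ to yield the same separated-set count.
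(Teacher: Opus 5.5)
Your arguments for (1), (2), and for the $\Gamma_\varepsilon^{\{F_n\}_{n\in\mathbb N}}$ and $\Gamma_{\mathrm{as}}$ parts of (3) go through. The direct count on $\mathbb Z^2$ is the same computation as the paper's reduction to $s_{\{0\}}(\Lambda_E,\cdot,d)$, and working at an arbitrary $y$ rather than at the paper's special point $y_\varepsilon$ is harmless.

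The gap is in the preimage quantities $\overline{D}^{\mathrm{pre}}$ and $\underline{D}^{\mathrm{pre}}$. Your set $Z_y$ pins every layer $m\le K$. Hence $\sigma^{-n}Z_y=\{w:\sigma^n w\in Z_y\}$ satisfies $w^{(l)}=y^{(l-n)}$ for every $l\le n+K$, so it is pinned on the whole window $F_n=\{0,\dots,n-1\}$. At scale $2^{-k}$, the only free coordinates of $\sigma^{-n}Z_y$ that $d_{F_n}$ can see lie in layers $n+K<l<n+k$. Therefore $s_{F_n}(\sigma^{-n}Z_y,2^{-k},\rho)\le 2^{O(k^2)}$ uniformly in $n$, and the inclusion $\sigma^{-n}\Gamma_\varepsilon^{\{F_m\}_{m\in\mathbb N}}(y)\supset\sigma^{-n}Z_y$ only gives $\Phi_\varepsilon(y,\delta)\ge 0$. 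The claim that $\sigma^{-n}Z_y$ has ``the same product form'' is true but useless here. Your heuristic that the past carries no entropy is exactly wrong for this quantity: $\sigma^{-n}$ moves the past layers of the stable set into the window $F_n$, and that is where $\Phi_\varepsilon$ gets its growth.

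To repair it, keep the past free. Two ways work.

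First option: pin only the coordinates $|j|\le K$ in every layer. This already places $z$ in $\Gamma_\varepsilon^{\{F_m\}_{m\in\mathbb N}}(y,\rho)$, since every remaining difference has weight at most $2^{-K-1}$ after any shift. Then $\sigma^{-n}$ of that set is still free at all $|j|>K$, $j\notin E$, on every layer of $F_n$. Your count together with (2) then handles $\Phi_\varepsilon$.

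Second option: note that the stable set contains every $w$ agreeing with $y$ on the layers $\ge -K$, because under any forward shift a difference in a layer $i<-K$ has weight at most $2^{-K-1}$. Hence $\sigma^{-n}\Gamma_\varepsilon^{\{F_m\}_{m\in\mathbb N}}(y,\rho)$ contains all $w$ with $w^{(l)}=y^{(l-n)}$ for $l\ge n-K$. These points are free on the layers $0,\dots,n-K-1$, giving $s_{F_n}\ge s_{\{0\}}(\Lambda_E,\delta,d)^{\,n-K}$ and hence $\Phi_\varepsilon(y,\delta,\rho)\ge\log s_{\{0\}}(\Lambda_E,\delta,d)$. This use of the free past is what the paper's preimage estimate rests on.

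A smaller inaccuracy: $\Gamma_{\mathrm{as}}(y)$ is not a countable union of sets pinned on all layers $\ge N$, because the layer from which row $j$ agrees with $y$ may grow with $|j|$. You only use such sets as subsets, so your asymptotic lower bound is unaffected.
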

	
	\begin{proof}
		(1) We prove only the first equality, as proof of the second one is entirely analogous.
		
		For every $\delta>0$, by exploiting the product structure of $Y_E$ along with (\ref{se-sp}), one has
		\begin{equation}\label{se-sp-YE}
				s_{\{0\}}(\Lambda_{E},\delta,d)^{|F_n|}
				\leq s_{F_{n}}(Y_E,\delta,\rho)
				\leq r_{F_{n}}(Y_E,\dfrac{\delta}{2},\rho)
				\leq r_{\{0\}}(\Lambda_{E},\frac{\delta}{2},d)^{|\widetilde{F}_{n_{\delta}}|}
				\leq s_{\{0\}}(\Lambda_{E},\frac{\delta}{2},d)^{|\widetilde{F}_{n_{\delta}}|},
			\end{equation}
			where $\widetilde{F}_{n_\delta}= \{-\big\lceil \frac{\log\frac{1}{\delta}}{\log 2}\big\rceil,-\big\lceil \frac{\log\frac{1}{\delta}}{\log 2}\big\rceil+1,\cdots,n+\big\lceil \frac{\log\frac{1}{\delta}}{\log 2}\big\rceil\}$. This shows that the upper metric mean dimension of $Y_E$ can be computed directly from the scale complexity of $\Lambda_E$, namely,
		\begin{equation}\label{sp for YE}
			\overline{\mathrm{mdim}}_M(Y_{E},\{F_{n}\}_{n\in\mathbb{N}},\rho)
			=
			\limsup_{\delta \to 0}\frac{\log s_{\{0\}}(\Lambda_{E},\delta,d)}{\log \frac{1}{\delta}}=	\limsup_{\delta \to 0}\frac{\log r_{\{0\}}(\Lambda_{E},\delta,d)}{\log \frac{1}{\delta}}.
		\end{equation}
		
		For each $\delta>0$, set
		\(
		m_{\delta}=\Big\lfloor\frac{\log \frac{1}{\delta}}{\log 2}\Big\rfloor
		\)
		(and then $2^{m_\delta}\le \frac{1}{\delta}\le 2^{m_\delta+ 1}$), hence
		\begin{equation}\label{estimate-YE}
				\begin{aligned}
					2^{|\{-m_{\delta}+1,-m_{\delta}+2,\cdots,m_{\delta}-1\}\backslash E|}
					&\leq s_{\{0\}}(\Lambda_{E},2^{-m_{\delta}},d)
					\leq s_{\{0\}}(\Lambda_{E},\delta,d)
					\\&	\leq s_{\{0\}}(\Lambda_{E},2^{-m_{\delta}-1},d)
					\leq 2^{|\{-m_{\delta}-2,-m_{\delta}-1,\cdots,m_{\delta}+2\}\backslash E|}.
				\end{aligned}
		\end{equation}
		On one hand, applying \eqref{sp for YE} to the latter part of the inequality \eqref{estimate-YE} one has
		\[
		\overline{\mathrm{mdim}}_M(Y_{E},\{F_{n}\}_{n\in\mathbb{N}},\rho)
		\leq
		\limsup_{n\to\infty}\frac{\left|\{-n+1,-n+2,\cdots,n-1\}\backslash E\right|}{n}.
		\]
		For the reverse direction, we also use (\ref{estimate-YE}). Evaluating at the dyadic scales $\delta=2^{-n}$ gives
		\[
		\begin{aligned}
		\limsup_{n\to\infty}\frac{\left|\{-n+1,-n+2,\cdots,n-1\}\backslash E\right|}{n}
				\leq
				\overline{\mathrm{mdim}}_M(Y_{E},\{F_{n}\}_{n\in\mathbb{N}},\rho).
		\end{aligned}
		\]
		
		Therefore, it is proven the first equality in (1) under metric $\rho=\rho_1$:
		\[
			\overline{\mathrm{mdim}}_M(Y_{E},\{F_{n}\}_{n\in\mathbb{N}},\rho)
			=
			\limsup_{n\to\infty}\frac{\left|\{-n+1,-n+2,\cdots,n-1\}\backslash E\right|}{n}.
			\]
		Now let \(F\subset\mathbb Z\) and \(c> 1\), \(\delta>0\), we have
		$
		s_F(Y_E,\delta,\rho_c)
		=
		s_F(Y_E,\delta^c,\rho),
		$
		and then
		\begin{equation}\label{newmetric}
			h_{\mathrm{top}}(Y_E,\delta,\{F_n\}_{n\in\mathbb N},\rho_c)
			=
			h_{\mathrm{top}}(Y_E,\delta^c,\{F_n\}_{n\in\mathbb N},\rho).
		\end{equation}
		It follows that
		\begin{equation*}\label{times c}
			\begin{aligned}
				\overline{\mathrm{mdim}}_M
				(Y_E,\{F_n\}_{n\in\mathbb N},\rho_c)
				&=
				c\cdot
				\overline{\mathrm{mdim}}_M
				(Y_E,\{F_n\}_{n\in\mathbb N},\rho) \\
				&=
				c\cdot
				\limsup_{n\to\infty}\frac{\left|\{-n+1,-n+2,\cdots,n-1\}\backslash E\right|}{n}.
			\end{aligned}
		\end{equation*}

		(2) The case of $c=1$
		comes directly from the following easy estimations and (\ref{sp for YE}):
		\[
		s_{\{0\}}(\Lambda_{E\cup F},\delta,d)
		\leq
		s_{\{0\}}(\Lambda_E,\delta,d)
		\leq
		r_{\{0\}}(\Lambda_E,\frac{\delta}{2},d)
		\leq
		2^{|F|}\,r_{\{0\}}(\Lambda_{E\cup F},\frac{\delta}{2},d).
		\]
		Now the conclusion for the case of \(c>1\) follows immediately from (1).
		
		(3) Again we only prove the first ones. Fix each $\varepsilon>0$. By (\ref{f1}) and \cite[Theorem 1.3 and Theorem 1.5]{Yang-Chen-Zhou2024} we already know that
		\[
		\begin{aligned}
			&\overline{D}_{\mathrm{ext}}(Y_{E},\varepsilon,\rho_{c})
			\leq
			\overline{D}_{\mathrm{int}}(Y_{E},\varepsilon,\rho_{c})
			=
			\overline{\mathrm{mdim}}_M(Y_{E},\{F_{n}\}_{n\in\mathbb{N}},\rho_{c}),
			\\&\overline{D}_{\mathrm{ext}}^{\mathrm{pre}}(Y_{E},\varepsilon,\rho_{c})
			\leq
			\overline{D}_{\mathrm{int}}^{\mathrm{pre}}(Y_{E},\varepsilon,\rho_{c})
			\leq
			\overline{\mathrm{mdim}}_M(Y_{E},\{F_{n}\}_{n\in\mathbb{N}},\rho_{c}),
			\\&\overline{D}_{\mathrm{ext}}^{\mathrm{as}}(Y_{E},\rho_{c})
			\leq
			\overline{D}_{\mathrm{int}}^{\mathrm{as}}(Y_{E},\rho_{c})
			=
			\overline{\mathrm{mdim}}_M(Y_{E},\{F_{n}\}_{n\in\mathbb{N}},\rho_{c}).
		\end{aligned}
		\]
		It remains to prove
		\[		\min \left\{\overline{D}_{\mathrm{ext}}(Y_{E},\varepsilon,\rho_{c}),\overline{D}_{\mathrm{ext}}^{\mathrm{pre}}(Y_{E},\varepsilon,\rho_{c}),\overline{D}_{\mathrm{ext}}^{\mathrm{as}}(Y_{E},\rho_{c})\right\}
		\geq
		\overline{\mathrm{mdim}}_M(Y_{E},\{F_{n}\}_{n\in\mathbb{N}},\rho_{c}).
		\]

Set \(m=m_{\varepsilon,c} =
			\max\Big\{\Big\lceil\frac{c\cdot\log\frac{1}{\varepsilon}}{\log 2}\Big\rceil,0\Big\}
			\), then \(2^{m+ 1} \varepsilon^c > 1\). For every $y\in Y$,
			\begin{eqnarray}
				\Gamma^{\{F_n\}_{n\in\mathbb{N}}}_{\varepsilon}(y,\rho_{c})
				& \hskip -6pt = & \hskip -6pt \left\{z=(z^{(n)})_{n\in\mathbb{Z}}\in Y_{E}:\rho(\sigma^{n}(y),\sigma^{n}(z))\leq \varepsilon^{c}\text{ for all }n\geq 0\right\}
				\nonumber		\\ & \hskip -6pt  = & \hskip -6pt \left\{z=(z^{(n)})_{n\in\mathbb{Z}}\in Y_{E}:d(y^{(k+n)},z^{(k+n)})\leq 2^{|k|}\varepsilon^{c}\text{ for all $n\geq 0$ and $k\in\mathbb{Z}$}\right\}
				\nonumber	\\ & \hskip -6pt  =& \hskip -6pt  \left\{z=(z^{(n)})_{n\in\mathbb{Z}}\in Y_{E}:d(y^{(k+n)},z^{(k+n)})\leq 2^{|k|}\varepsilon^{c}\text{ for all $n\geq 0$ and $|k|\leq m$}\right\}
	\nonumber \\
				& \hskip -6pt  \supset & \hskip -6pt
				\left\{
				z=(z^{(n)})_{n\in\mathbb{Z}}\in Y_{E}:
				d(y^{(n)},z^{(n)})\leq \varepsilon^{c}\text{ for all $n\geq -m$}
				\right\}. 			\label{finite inv}	
			\end{eqnarray}
			Let $y_{\varepsilon}=(y^{(n)})_{n\in\mathbb{Z}}$ in $Y_E=\Lambda_E^{\mathbb{Z}}$ be such that
			\(y^{(n)}_{i}=0\) once $-m\leq i\leq m$ and $n\in\mathbb{Z}$. Then
			\begin{equation}\label{stable-YE}
				\Gamma^{\{F_n\}_{n\in\mathbb{N}}}_{\varepsilon}(y,\rho_{c})
				\supset Y_{E\cup\{-m,-m+1,\cdots, m\}}.
			\end{equation}
			Indeed, if $z^{(n)}_{i}=0$ for all $-m\le i\le m$ and every $n\in\mathbb{Z}$, then
			$d(z^{(n)},y^{(n)})\le 2^{-(m+1)}<\varepsilon^{c}$ for every $n\in\mathbb{Z}$, and hence $z\in\Gamma^{\{F_n\}_{n\in\mathbb{N}}}_\varepsilon(y,\rho_{c})$ by (\ref{finite inv}).

		Combined with the definition of \(\overline{D}_{\mathrm{ext}}(Y_E,\varepsilon,\rho_{c})\) and the above item (2), one has
		\[
		\overline{D}_{\mathrm{ext}}(Y_E,\varepsilon,\rho_{c})
		\geq
		\overline{\mathrm{mdim}}_{M}
		\left(
		Y_{E\cup \{-m,-m+1,\cdots,m\}},
		\{F_n\}_{n\in\mathbb N},\rho_{c}
		\right)
		=
		\overline{\mathrm{mdim}}_{M}
		\left(
		Y_E,
		\{F_n\}_{n\in\mathbb N},\rho_{c}
		\right).
		\]
		Moreover, by (\ref{finite inv}) and the definition of $\Gamma_{\mathrm{as}}(y,\rho_{c})$ we have for every $y\in Y$ and any $k\geq 0$,
		$$
		\sigma^{-k}\Gamma_{\varepsilon}^{\{F_n\}_{n\in\mathbb{N}}}(y,\rho_{c})\cap \Gamma_{\mathrm{as}}(y,\rho_{c})\supset \left\{z\in Y_E:y^{(j)}=z^{(j)}\text{ for all }j\geq k-m\right\}.
		$$
		This implies that for all $k>m$
		\begin{equation}\label{estimation for as}
			\begin{aligned}
				& \ s_{F_{k}}(\sigma^{-k}\Gamma_{\varepsilon}^{\{F_n\}_{n\in\mathbb{N}}}(y,\rho_{c})\cap\Gamma_{\mathrm{as}}(y,\rho_{c}),\delta,\rho_{c}) \\
				\geq & \ s_{F_k}(\left\{z\in Y_E:y^{(j)}=z^{(j)}\text{ for all }j\geq k-m\right\},\delta,\rho_{c})	\\
				= & \ s_{F_k}(\left\{z\in Y_E:y^{(j)}=z^{(j)}\text{ for all }j\geq k-m\right\},\delta^{c},\rho) \geq s_{\{0\}}(\Lambda_{E},\delta^{c},d)^{|F_{k-m}|}.
			\end{aligned}
		\end{equation}
		Then together with (\ref{sp for YE}), one has
		$$
		\min \left\{\overline{D}_{\mathrm{ext}}^{\mathrm{pre}}(Y_{E},\varepsilon,\rho_{c}),\overline{D}_{\mathrm{ext}}^{\mathrm{as}}(Y_{E},\rho_{c})\right\}
		\geq		
		\overline{\mathrm{mdim}}_M(Y_{E},\{F_{n}\}_{n\in\mathbb{N}},\rho_{c}).
		$$
		This completes the proof.
	\end{proof}
	
	As a direct corollary of Proposition \ref{YE}, we have:
	
	\begin{prop} \label{YE-result}
		For arbitrarily given $0\le a\le b$ and $c\geq \max\{1,b\}$, one can choose  a subset
		$E\subset \mathbb{Z}$ such that for every $\varepsilon>0$,
		\[
		\begin{aligned}
			c\ \ge\ & \ \overline{D}_{\mathrm{int}}(Y_{E},\varepsilon,\rho_{c})
			=\overline{D}_{\mathrm{int}}^{\mathrm{pre}}(Y_{E},\varepsilon,\rho_{c})
			=\overline{D}_{\mathrm{int}}^{\mathrm{as}}(Y_{E},\rho_{c}) \\
			=\  & \ \overline{D}_{\mathrm{ext}}(Y_{E},\varepsilon,\rho_{c})=\overline{D}^{\mathrm{pre}}_{\mathrm{ext}}(Y_{E},\varepsilon,\rho_{c})
			= \overline{D}^{\mathrm{as}}_{\mathrm{ext}}(Y_{E},\rho_{c})= b \\
			\ge\  a\ =\ & \
			\underline{D}^{\mathrm{pre}}_{\mathrm{ext}}(Y_{E},\varepsilon,\rho_{c})
			=\underline{D}^{\mathrm{as}}_{\mathrm{ext}}(Y_{E},\rho_{c})=\underline{D}_{\mathrm{ext}}(Y_{E},\varepsilon,\rho_{c}) \\
			=\ & \ \underline{D}_{\mathrm{int}}(Y_{E},\varepsilon,\rho_{c})=
			\underline{D}_{\mathrm{int}}^{\mathrm{pre}}(Y_{E},\varepsilon,\rho_{c}) =\underline{D}_{\mathrm{int}}^{\mathrm{as}}(Y_{E},\rho_{c}).
		\end{aligned}
		\]
	\end{prop}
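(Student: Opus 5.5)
The plan is to reduce everything to Proposition \ref{YE}. That proposition already shows that, for any $E\subset\mathbb Z$, all six upper quantities for $(Y_E,\sigma,\rho_c)$ coincide with $\overline{\mathrm{mdim}}_M(Y_E,\{F_n\}_{n\in\mathbb N},\rho_c)$. Likewise, all six lower quantities coincide with $\underline{\mathrm{mdim}}_M(Y_E,\{F_n\}_{n\in\mathbb N},\rho_c)$. By item (1), these two values are
\[
c\cdot\limsup_{n\to\infty}\frac{|\{-n+1,\cdots,n-1\}\setminus E|}{n}
\quad\text{and}\quad
c\cdot\liminf_{n\to\infty}\frac{|\{-n+1,\cdots,n-1\}\setminus E|}{n}.
\]
So it suffices to pick $E$ with the following property. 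Write $\phi(n)=|\{-n+1,\cdots,n-1\}\setminus E|/n$. We need $\limsup_n \phi(n)=b/c$ and $\liminf_n\phi(n)=a/c$. Both targets lie in $[0,1]$ because $c\ge\max\{1,b\}$. The bound $c\ge b$ then follows, since $\phi(n)\le 2$ trivially, but more precisely the limsup equals $b/c\le 1$.

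To make the construction simple, I will take $E\supset\{-1,-2,\cdots\}$, i.e.\ impose all constraints on the negative side. Then $\phi(n)=|\{0,\cdots,n-1\}\setminus E|/n$ is just the density of the complement $A:=\mathbb N_0\setminus E$ on initial segments. So I need a set $A\subset\mathbb N_0$ whose lower density is $\alpha:=a/c$ and whose upper density is $\beta:=b/c$, with $0\le\alpha\le\beta\le1$. This is a standard block construction. Fix a rapidly increasing sequence $N_1<N_2<\cdots$ with $N_{k+1}/N_k\to\infty$. On the blocks $[N_{2j},N_{2j+1})$, take $A$ to be a set of density $\beta$, namely the integers $i$ with $\lfloor \beta(i+1)\rfloor>\lfloor\beta i\rfloor$. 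On the blocks $[N_{2j+1},N_{2j+2})$, use the analogous Beatty-type pattern with density $\alpha$.

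The main step to check is that the counting function $n\phi(n)$ oscillates exactly between $\alpha n$ and $\beta n$ asymptotically. Within a block, the count is a convex-combination-type interpolation between the values $\alpha$ and $\beta$. Since the earlier blocks contribute $O(N_k)=o(N_{k+1})$, the ratio $\phi(n)$ at block ends $n=N_{k+1}$ tends to $\beta$ or $\alpha$ alternately. At intermediate $n$, the ratio lies, up to $o(1)$, between $\alpha$ and $\beta$, because it is a weighted average of the two densities plus error terms. Hence $\limsup\phi=\beta$ and $\liminf\phi=\alpha$. The degenerate cases $\alpha=0$ or $\beta=1$ are handled by the same formulas: the density-$0$ block is empty, and the density-$1$ block is all of the block.

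Finally, I substitute into Proposition \ref{YE}(1) and (3). This gives the displayed chain with upper value $c\cdot\beta=b$ and lower value $c\cdot\alpha=a$, for every $\varepsilon>0$. The inequalities $c\ge b\ge a$ hold by hypothesis. I expect no real obstacle. The only care needed is the bookkeeping of the $o(1)$ errors in the density estimate, and noting that the negative half of $E$ contributes nothing to the count.
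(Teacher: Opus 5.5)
Your proposal is correct and follows the paper's route. The paper states this result as a direct corollary of Proposition \ref{YE}, items (1) and (3), and leaves implicit the choice of $E$ with $\limsup_{n}\phi(n)=b/c$ and $\liminf_{n}\phi(n)=a/c$; your explicit choice fills in that step correctly, namely all negative integers in $E$ plus an alternating-block set on $\mathbb{N}_0$ of lower density $a/c$ and upper density $b/c$, with $c\ge\max\{1,b\}$ guaranteeing $0\le a/c\le b/c\le 1$.
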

	
	\subsection{Separation via the product construction}\
	
	Now fix \(0\leq a\leq b\) as in Theorem~\ref{count}.
	Choose \(c\geq \max\{b,1\}\), and let \(E\subset \mathbb Z\) be given by
	Proposition~\ref{YE-result}. For simplicity, write \(Y=Y_E\), equipped with the
	fixed metric \(\rho_c\).
	
	We consider the product system $(X\times Y,\sigma\times \sigma)$ constructed as before, with the metric
	\[
	(d\times \rho_{c})\bigl((x,y),(x',y')\bigr)=\max\{d(x,x'),\rho_{c}(y,y')\}.
	\]
	From now on, all stable sets are understood with respect to these
	metrics, so we omit the metric from the notation.
	Then, for every $(x,y)\in X\times Y$ and each $i\in\mathbb{Z}$, one has
	\[\Gamma_{\mathrm{as}}((x,y))
	=
	\Gamma_{\mathrm{as}}(x)\times \Gamma_{\mathrm{as}}(y)
	\ \text{and}\
	(\sigma\times \sigma)^{i}\Gamma_{\varepsilon}^{\{F_n\}_{n\in\mathbb{N}}}((x,y))
	=
	\sigma^{i}\Gamma_{\varepsilon}^{\{F_n\}_{n\in\mathbb{N}}}(x)\times \sigma^{i}\Gamma_{\varepsilon}^{\{F_n\}_{n\in\mathbb{N}}}(y).
	\]
	
	As shown by the following lemma, the product system can be treated componentwise.

	\begin{lemma}\label{product-local-entropy}
		Let $\delta>0$, $\varepsilon\in(0,1)$ and $x\in X$. Then:
		\begin{enumerate}

			\item For every $y\in Y$, one has
			\begin{equation*}\label{less than}
				\begin{aligned}
					&h_{\mathrm{top}}(\Gamma_{\varepsilon}^{\{F_n\}_{n\in\mathbb{N}}}((x, y)),
					\delta,\{F_n\}_{n\in\mathbb{N}},d\times \rho_{c})
					\\&\quad\quad\quad\quad\quad\quad\quad\le h_{\mathrm{top}}(\Gamma_{\varepsilon}^{\{F_n\}_{n\in\mathbb{N}}}(x),
					\frac{\delta}{2},\{F_n\}_{n\in\mathbb{N}},d)
					+ h_{\mathrm{top}}(\Gamma^{\{F_n\}_{n\in\mathbb{N}}}_{\varepsilon}(y),
					\frac{\delta}{2},\{F_n\}_{n\in\mathbb{N}},\rho_{c}),
					\\&\Phi_{\varepsilon}\left((x,y),\delta,d\times\rho_{c}\right)\leq \Phi_{\varepsilon}\left(x,\frac{\delta}{2},d\right)+\Phi_{\varepsilon}\left(y,\frac{\delta}{2},\rho_{c}\right),
					\\&h_{\mathrm{top}}(\Gamma_{\mathrm{as}}((x, y)),
					\delta,\{F_n\}_{n\in\mathbb{N}},d\times \rho_{c}) \\
					&\quad\quad\quad\quad\quad\quad\quad
					\le h_{\mathrm{top}}(\Gamma_{\mathrm{as}}(x),
					\frac{\delta}{2},\{F_n\}_{n\in\mathbb{N}},d)
					+ h_{\mathrm{top}}(\Gamma_{\mathrm{as}}(y),
					\frac{\delta}{2},\{F_n\}_{n\in\mathbb{N}},\rho_{c}).
				\end{aligned}
			\end{equation*}
			
			\item Let \(y_\varepsilon=(y_\varepsilon^{(n)})_{n\in\mathbb Z}\in Y_E\) be the point
			constructed in Proposition~\ref{YE}~(3), namely,
			$
			(y_\varepsilon^{(n)})_i=0$ for all $n\in\mathbb Z$ and $|i|\leq m_{\varepsilon,c}
			:= \max\left\{
			\left\lceil
			\frac{c\cdot\log\frac{1}{\varepsilon}}{\log 2}
			\right\rceil,0
			\right\}.
			$
			Then, for any \(\delta>0\),
			\[
			\begin{aligned}
				&h_{\mathrm{top}}\bigl(
				\Gamma_{\varepsilon}^{\{F_n\}_{n\in\mathbb N}}((x,y_\varepsilon)),
				\delta,\{F_n\}_{n\in\mathbb N},d\times\rho_c
				\bigr) \\
				&\quad\quad\quad\quad\quad\quad\quad\geq
				h_{\mathrm{top}}\bigl(
				\Gamma_{\varepsilon}^{\{F_n\}_{n\in\mathbb N}}(x),
				\delta,\{F_n\}_{n\in\mathbb N},d
				\bigr)
				+
				h_{\mathrm{top}}\bigl(
				Y_{\widetilde E},
				2^{\frac{1}{c}}\delta,\{F_n\}_{n\in\mathbb N},\rho_c
				\bigr)
			\end{aligned}
			\]
			where $\widetilde E
			:=
			E\cup\{-m_{\varepsilon,c},-m_{\varepsilon,c}+1,\ldots,m_{\varepsilon,c}\}
			$,
			and for every $y\in Y$,
			\[
			\Phi_{\varepsilon}\left((x,y),\delta,d\times\rho_{c}\right)\geq \Phi_{\varepsilon}\left(x,\delta,d\right)+h_{\mathrm{top}}(Y_{E},2^{\frac{1}{c}}\delta,\{F_n\}_{n\in\mathbb{N}},\rho_{c}),
			\]
			\[
			\begin{aligned}
				&h_{\mathrm{top}}(\Gamma_{\mathrm{as}}((x, y),
				\delta,\{F_n\}_{n\in\mathbb{N}},d\times \rho_{c})  \\
				&\quad\quad\quad\quad\quad\quad\quad
				\ge h_{\mathrm{top}}(\Gamma_{\mathrm{as}}(x),\delta,\{F_n\}_{n\in\mathbb{N}},d)
				+h_{\mathrm{top}}(Y_{E},2^{\frac{1}{c}}\delta,\{F_n\}_{n\in\mathbb{N}},\rho_{c}).
			\end{aligned}
			\]
		\end{enumerate}
	\end{lemma}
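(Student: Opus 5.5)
The plan is to exploit two facts. First, the product metric $d\times\rho_c$ is a maximum. Second, every stable set considered here splits as a product. Part (1) then follows from an upper bound on separated sets in products, and part (2) from a lower bound in which the $Y$-factor contributes a genuine limit rather than a $\limsup$.

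\emph{Product structure.} Since $(d\times\rho_c)_F=\max\{d_F,(\rho_c)_F\}$, a point $(x',y')$ lies within $\varepsilon$ of $(x,y)$ along $F_n$ exactly when both coordinates do. Hence $\Gamma_{\varepsilon}^{\{F_n\}_{n\in\mathbb N}}((x,y))=\Gamma_{\varepsilon}^{\{F_n\}_{n\in\mathbb N}}(x)\times\Gamma_{\varepsilon}^{\{F_n\}_{n\in\mathbb N}}(y)$. The same splitting holds for $\Gamma_{\mathrm{as}}$ and for the preimages $(\sigma\times\sigma)^{-n}\Gamma_\varepsilon^{\{F_m\}_{m\in\mathbb N}}((x,y))$, as already recorded before the lemma.

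\emph{Part (1).} For $A\subset X$, $B\subset Y$ and $F\in\mathfrak F_{\mathbb Z}$, take minimal $(F,\delta/2)$-spanning sets $S_A$ for $A$ and $S_B$ for $B$. Then $S_A\times S_B$ is $(F,\delta/2)$-spanning for $A\times B$ in the max metric. By \eqref{se-sp},
\[
s_F(A\times B,\delta,d\times\rho_c)\le r_F(A,\tfrac{\delta}{2},d)\,r_F(B,\tfrac{\delta}{2},\rho_c)\le s_F(A,\tfrac{\delta}{2},d)\,s_F(B,\tfrac{\delta}{2},\rho_c).
\]
I will take logarithms, divide by $|F_n|$, and use $\limsup(a_n+b_n)\le\limsup a_n+\limsup b_n$. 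Applying this with $(A,B)$ equal to the $\varepsilon$-stable sets, then to their preimages $\sigma^{-n}(\cdot)$ (which gives the $\Phi_\varepsilon$ bound), and then to the asymptotic stable sets yields all three inequalities of (1).

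\emph{Part (2).} Conversely, if $E_1$ is $(F,\delta)$-separated in $A$ and $E_2$ is $(F,\delta)$-separated in $B$, then $E_1\times E_2$ is $(F,\delta)$-separated in $A\times B$. So $s_F(A\times B,\delta)\ge s_F(A,\delta)\,s_F(B,\delta)$. To pass to entropies I need the $Y$-term to have a genuine limit, or at least a $\liminf$ bound, and this is the main obstacle. The loss of scale $2^{1/c}$ is what resolves it.

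From the left inequality in \eqref{se-sp-YE} together with $s_F(\cdot,\delta,\rho_c)=s_F(\cdot,\delta^c,\rho)$,
\[
\liminf_{n}\tfrac1n\log s_{F_n}(Y_{\widetilde E},\delta,\rho_c)\ge\log s_{\{0\}}(\Lambda_{\widetilde E},\delta^c,d).
\]
The right-hand chain of \eqref{se-sp-YE}, taken at scale $2\delta^c$ (that is, $\rho_c$-scale $2^{1/c}\delta$), gives
\[
h_{\mathrm{top}}(Y_{\widetilde E},2^{1/c}\delta,\{F_n\}_{n\in\mathbb N},\rho_c)\le\log s_{\{0\}}(\Lambda_{\widetilde E},\delta^c,d),
\]
because the enlarged window $\widetilde F_{n_\delta}$ has size $n+O(\log\frac1\delta)$. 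Hence $\limsup(a_n+b_n)\ge\limsup a_n+\liminf b_n$ yields the claim for $\Gamma_\varepsilon((x,y_\varepsilon))\supset\Gamma_\varepsilon(x)\times Y_{\widetilde E}$, where the inclusion uses \eqref{stable-YE}.

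For $\Phi_\varepsilon$ and $\Gamma_{\mathrm{as}}$ I will use the containment displayed before \eqref{estimation for as}. It says that $\sigma^{-k}\Gamma_\varepsilon^{\{F_n\}_{n\in\mathbb N}}(y)$ and $\Gamma_{\mathrm{as}}(y)$ both contain $\{z\in Y_E: z^{(j)}=y^{(j)}\ \forall j\ge k-m\}$. For $\Gamma_{\mathrm{as}}$ I take this set at $k$ equal to the current time $n$, so a different subset is used for each $n$. For $\Phi_\varepsilon$, which compares $\sigma^{-n}\Gamma$ at time $n$, the same $k=n$ choice applies. In both cases \eqref{estimation for as} gives
\[
s_{F_n}(\cdot,\delta,\rho_c)\ge s_{\{0\}}(\Lambda_E,\delta^c,d)^{\,n-m}.
\]
This is again a $\liminf$ lower bound $\ge\log s_{\{0\}}(\Lambda_E,\delta^c,d)\ge h_{\mathrm{top}}(Y_E,2^{1/c}\delta,\{F_n\}_{n\in\mathbb N},\rho_c)$. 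Combining it with the $X$-factor through the product separated sets, exactly as above, gives the remaining two inequalities of (2).
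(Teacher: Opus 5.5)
Your proposal matches the paper's proof essentially step for step: part (1) comes from the product spanning-set bound $r_{F_n}(A\times B,\frac{\delta}{2},d\times\rho_c)\le r_{F_n}(A,\frac{\delta}{2},d)\,r_{F_n}(B,\frac{\delta}{2},\rho_c)$ together with \eqref{se-sp}, and part (2) from products of separated sets, where the $\liminf$ of the $Y$-factor is bounded below via \eqref{se-sp-YE}, \eqref{stable-YE} and \eqref{estimation for as} and compared with the $\limsup$ at the coarser scale $2^{1/c}\delta$ as in \eqref{gap}. One small correction, which you inherit from the displayed inclusion preceding \eqref{estimation for as}: $\sigma^{-k}\Gamma_\varepsilon^{\{F_n\}_{n\in\mathbb N}}(y)$ contains $\{z\in Y_E: z^{(j)}=y^{(j-k)}\ \forall j\ge k-m\}$ (agreement with $\sigma^{-k}y$, not with $y$), but this set gives the same count $s_{\{0\}}(\Lambda_E,\delta^c,d)^{k-m}$ under $F_k$, so your conclusion for $\Phi_\varepsilon$ is unaffected.
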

	
	\begin{proof}
		(1) The conclusions follow directly from \eqref{se-sp} and that for every $A\subset X$ and $B\subset Y$,
	\[
		r_{F_n}(A\times B,\frac{\delta}{2},d\times \rho_{c})
			\leq 	r_{F_n}(A,\frac{\delta}{2},d)\cdot r_{F_n}(B,\frac{\delta}{2},\rho_{c}).
		\]
		
		(2)  Applying (\ref{se-sp-YE}) to $\widetilde{E}$ together with the fact $
		s_F(Y_{\widetilde{E}},\delta,\rho_c)
		=
		s_F(Y_{\widetilde{E}},\delta^c,\rho)
		$, we obtain
		\begin{equation}\label{gap}
			\limsup_{n\to\infty}\frac{\log s_{F_n}(Y_{\widetilde{E}},2^{\frac{1}{c}}\delta,\rho_{c})}{|F_n|}
			\le
			\log s_0(\Lambda_{\widetilde{E}},\delta^{c},d)
			\le
			\liminf_{n\to\infty}\frac{\log s_{F_n}(Y_{\widetilde{E}},\delta,\rho_{c})}{|F_n|}.
		\end{equation}

		Now using the product structure of stable sets, we obtain
			\[
			s_{F_n}((\sigma\times \sigma)^{i} \Gamma^{\{F_m\}_{m\in\mathbb{N}}}_{\varepsilon}((x,y)),\delta,d\times \rho_{c})\geq
			s_{F_n}(\sigma^{i} \Gamma^{\{F_m\}_{m\in\mathbb{N}}}_{\varepsilon}(x),\delta,d)\cdot
			s_{F_n}(\sigma^{i} \Gamma^{\{F_m\}_{m\in\mathbb{N}}}_{\varepsilon}(y),\delta,\rho_{c})
			\]
			for every $y\in Y$ and each $i\in \mathbb{Z}$.
			Hence for every $y\in Y$,
			\[
			\begin{aligned}
				\limsup_{n\to\infty}
				&\frac{1}{|F_n|}
				\log s_{F_n}
				\left((\sigma\times \sigma)^{- n} \Gamma^{\{F_m\}_{m\in\mathbb{N}}}_{\varepsilon}((x,y)),
				\delta,
				d\times \rho_{c}
				\right)
				\\
				&\ge
				\limsup_{n\to\infty}
				\left[
				\frac{
					\log s_{F_n}
					\left(
					\sigma^{-n}\Gamma_{\varepsilon}^{\{F_m\}_{m\in\mathbb N}}(x),
					\delta,
					d
					\right)
				}{|F_n|}
				+
				\frac{
					\log s_{F_n}
					\left(
					\sigma^{-n}\Gamma_{\varepsilon}^{\{F_m\}_{m\in\mathbb N}}(y),
					\delta,
					\rho_{c}
					\right)
				}{|F_n|}
				\right]
				\\
				&\ge
				\limsup_{n\to\infty}
				\frac{
					\log s_{F_n}
					\left(
					\sigma^{-n}\Gamma_{\varepsilon}^{\{F_m\}_{m\in\mathbb N}}(x),
					\delta,
					d
					\right)
				}{|F_n|}
				+
				\liminf_{n\to\infty}
				\frac{
					\log s_{F_n}
					\left(
					\sigma^{-n}\Gamma_{\varepsilon}^{\{F_m\}_{m\in\mathbb N}}(y),
					\delta,
					\rho_{c}
					\right)
				}{|F_n|}
				\\
				&\ge
				\limsup_{n\to\infty}
				\frac{
					\log s_{F_n}
					\left(
					\sigma^{-n}\Gamma_{\varepsilon}^{\{F_m\}_{m\in\mathbb N}}(x),
					\delta,
					d
					\right)
				}{|F_n|}
				+
				\log s_{0}(\Lambda_{E},\delta^{c},d)
				\quad
				(\text{by }\eqref{estimation for as})
				\\
				&\ge
				\limsup_{n\to\infty}
				\frac{
					\log s_{F_n}
					\left(
					\sigma^{-n}\Gamma_{\varepsilon}^{\{F_m\}_{m\in\mathbb N}}(x),
					\delta,
					d
					\right)
				}{|F_n|}
				+h_{\mathrm{top}}(Y_{E},2\delta^{c},\{F_n\}_{n\in\mathbb{N}},\rho)
				\quad
				(\text{by }\eqref{se-sp-YE})
				\\&=\limsup_{n\to\infty}
				\frac{
					\log s_{F_n}
					\left(
					\sigma^{-n}\Gamma_{\varepsilon}^{\{F_m\}_{m\in\mathbb N}}(x),
					\delta,
					d
					\right)
				}{|F_n|}
				+h_{\mathrm{top}}(Y_{E},2^{\frac{1}{c}}\delta,\{F_n\}_{n\in\mathbb{N}},\rho_{c}),
				\quad
				(\text{by }\eqref{newmetric})
			\end{aligned}
			\]
and
	\begin{eqnarray}
& & h_{\mathrm{top}}(\Gamma_{\mathrm{as}} ((x, y)), \delta,\{F_n\}_{n\in\mathbb{N}},d\times \rho_{c})
			\nonumber \\
&\ge & \limsup_{n\to\infty}\left[
				\frac{\log s_{F_n}(\Gamma_{\mathrm{as}}(x),\delta,d)}{|F_n|}
				+ \frac{\log s_{F_n}(\Gamma_{\mathrm{as}}(y),\delta,\rho_{c})}{|F_n|}
				\right]
			 \nonumber \\
				&\ge &
				h_{\mathrm{top}}(\Gamma_{\mathrm{as}}(x),\delta,\{F_n\}_{n\in\mathbb{N}},d)
				+ \liminf_{n\to\infty}
				\frac{\log s_{F_n}(\Gamma_{\mathrm{as}}(y),\delta,\rho_{c})}{|F_n|}
			\nonumber \\
				&\ge &
				h_{\mathrm{top}}(\Gamma_{\mathrm{as}}(x),\delta,\{F_n\}_{n\in\mathbb{N}},d)
				+ h_{\mathrm{top}}(Y_E,2^{\frac{1}{c}}\delta,\{F_n\}_{n\in\mathbb{N}},\rho_c), \label{need}
			\end{eqnarray}
where \eqref{need} again comes from \eqref{estimation for as}, \eqref{se-sp-YE} and \eqref{newmetric}.
Moreover, for \(y_\varepsilon=(y_\varepsilon^{(n)})_{n\in\mathbb Z}\in Y_E\),
			\[
			\begin{aligned}
				& h_{\mathrm{top}}(\Gamma^{\{F_n\}_{n\in\mathbb{N}}}_{\varepsilon}((x,y_\varepsilon)),
				\delta,\{F_n\}_{n\in\mathbb{N}},d\times \rho_{c})
				\\ \ge &
				\limsup_{n\to\infty}\left[
				\frac{\log s_{F_n}(\Gamma^{\{F_m\}_{m\in\mathbb{N}}}_{\varepsilon}(x),\delta,d)}{|F_n|}
				+
				\frac{\log s_{F_n}(\Gamma^{\{F_m\}_{m\in\mathbb{N}}}_{\varepsilon}(y_\varepsilon),\delta,\rho_{c})}{|F_n|}
				\right]
				\\
				\ge &
				h_{\mathrm{top}}(\Gamma^{\{F_n\}_{n\in\mathbb{N}}}_{\varepsilon}(x),\delta,\{F_n\}_{n\in\mathbb{N}},d)
				+
				\liminf_{n\to\infty}
				\frac{\log s_{F_n}(\Gamma^{\{F_m\}_{m\in\mathbb{N}}}_{\varepsilon}(y_\varepsilon),\delta,\rho_{c})}{|F_n|}
				\\
				\ge &
				h_{\mathrm{top}}(\Gamma^{\{F_n\}_{n\in\mathbb{N}}}_{\varepsilon}(x),\delta,\{F_n\}_{n\in\mathbb{N}},d)
				+
				\liminf_{n\to\infty}
				\frac{\log s_{F_n}(Y_{\widetilde{E}},\delta,\rho_{c})}{|F_n|}\quad (\text{by }\eqref{stable-YE})
				\\
				\ge &
				h_{\mathrm{top}}(\Gamma^{\{F_n\}_{n\in\mathbb{N}}}_{\varepsilon}(x),\delta,\{F_n\}_{n\in\mathbb{N}},d)
				+
				\limsup_{n\to\infty}
				\frac{\log s_{F_n}(Y_{\widetilde{E}},2^{\frac{1}{c}}\delta,\rho_{c})}{|F_n|}\quad (\text{by }\eqref{gap})
				\\
				= &
				h_{\mathrm{top}}(\Gamma^{\{F_n\}_{n\in\mathbb{N}}}_{\varepsilon}(x),\delta,\{F_n\}_{n\in\mathbb{N}},d)
				+
				h_{\mathrm{top}}(Y_{\widetilde{E}},2^{\frac{1}{c}}\delta,\{F_n\}_{n\in\mathbb{N}},\rho_{c}).
			\end{aligned}
			\]
		This ends the proof.
	\end{proof}
	
	Obviously Theorem \ref{count} comes from the result below.

	\begin{prop} \label{answer}
		Let \((X\times Y,\sigma\times\sigma)\) be constructed as above and \(\varepsilon\in(0,1)\). Then
		\[
		\begin{aligned}
			&\overline{D}_{\mathrm{int}}(X\times Y,\varepsilon,d\times \rho_{c})=\overline{D}_{\mathrm{int}}^{\mathrm{pre}}(X\times Y,\varepsilon,d\times \rho_{c})=\overline{D}_{\mathrm{int}}^{\mathrm{as}}(X\times Y,d\times \rho_{c})=1+b,
			\\&\underline{D}_{\mathrm{int}}(X\times Y,\varepsilon,d\times \rho_{c})=\underline{D}_{\mathrm{int}}^{\mathrm{pre}}(X\times Y,\varepsilon,d\times \rho_{c})=\underline{D}_{\mathrm{int}}^{\mathrm{as}}(X\times Y,d\times \rho_{c})=1+a,
			\\	&\overline{D}_{\mathrm{ext}}(X\times Y,\varepsilon,d\times \rho_{c})=\overline{D}_{\mathrm{ext}}^{\mathrm{pre}}(X\times Y,\varepsilon,d\times \rho_{c})=\overline{D}_{\mathrm{ext}}^{\mathrm{as}}(X\times Y,d\times \rho_{c})=b,
			\\&\underline{D}_{\mathrm{ext}}(X\times Y,\varepsilon,d\times \rho_{c})=\underline{D}_{\mathrm{ext}}^{\mathrm{pre}}(X\times Y,\varepsilon,d\times \rho_{c})=\underline{D}_{\mathrm{ext}}^{\mathrm{as}}(X\times Y,d\times \rho_{c})=a.
		\end{aligned}
		\]
	\end{prop}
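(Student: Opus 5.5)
We would prove each of the twelve identities by sandwiching the product quantities between the componentwise bounds of Lemma \ref{product-local-entropy}, and then inserting the values already computed for the factors: Proposition \ref{G=0} for $X$ and Proposition \ref{YE-result} for $Y=Y_E$ with metric $\rho_c$.

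\emph{Upper bounds.} By Lemma \ref{product-local-entropy}~(1), for every $(x,y)$ the local entropy of the product at scale $\delta$ is at most the sum of the local entropies of $x$ and $y$ at scale $\delta/2$. This holds for the $\varepsilon$-stable, preimage and asymptotic versions alike. Dividing by $\log\frac1\delta$, and using $\log\frac{2}{\delta}/\log\frac1\delta\to1$, we take $\sup_{(x,y)}$ inside and obtain
\[
\sup_{(x,y)}(\cdots)\le \sup_x(\cdots)+\sup_y(\cdots).
\]
Taking $\limsup_{\delta\to0}$ gives $\overline{D}_{\mathrm{int}}\le 1+b$. For $\underline{D}_{\mathrm{int}}$ we use $\liminf(f+g)\le\limsup f+\liminf g$; since the $X$-part has $\overline{D}_{\mathrm{int}}=\underline{D}_{\mathrm{int}}=1$, this yields $\le 1+a$. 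For the ext quantities we apply the same argument pointwise at fixed $(x,y)$. By \eqref{perpoint1} the $X$-contribution tends to $0$, because the local entropy of any single $x$ is bounded by $\log(|G_n|+2)$. Hence the bound is $\limsup$ (respectively $\liminf$) of the $y$-term, which by Proposition \ref{YE-result} is at most $b$ (respectively $a$).

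\emph{Lower bounds.} We use Lemma \ref{product-local-entropy}~(2). For ext, we fix any $x$ and take $y=y_\varepsilon$ (for the preimage and asymptotic versions any $y$ works). Dropping the nonnegative $X$-term, the product quantity is at least $h_{\mathrm{top}}(Y_{\widetilde E},2^{1/c}\delta,\{F_n\}_{n\in\mathbb N},\rho_c)$, or the analogous term with $Y_E$. Dividing by $\log\frac1\delta$ (the rescaling by $2^{1/c}$ is harmless), and applying Proposition \ref{YE}~(1),(2) with $\widetilde E=E\cup$ a finite set, gives $\overline{\mathrm{mdim}}_M(Y_E)=b$ and $\underline{\mathrm{mdim}}_M(Y_E)=a$ after $\limsup$ or $\liminf$. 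For int, we choose for each $\delta$ the point $x_\delta\in X_{m_\delta}$ used in the proof of Proposition \ref{G=0}, paired with $y_\varepsilon$. The $X$-term then contributes $\ge(1-\frac1{P_m})(m^2-m)/(m+2)^2\to1$ uniformly along all $\delta\to0$. So
\[
\sup_{(x,y)}(\cdots)/\log\tfrac1\delta\ \ge\ 1-o(1)+h_{\mathrm{top}}(Y_{\widetilde E},2^{1/c}\delta,\ldots)/\log\tfrac1\delta,
\]
and taking $\limsup$ and $\liminf$ gives $1+b$ and $1+a$.

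\emph{Main obstacle.} The delicate step is the internal lower bound for $\underline{D}_{\mathrm{int}}$, where we need a $\liminf$ of a sum to be at least $1+a$. This works only because the $X$-term is bounded below by a quantity tending to $1$ for \emph{every} small $\delta$, not merely along a subsequence. Without that uniformity, the two $\liminf$'s might be attained along different scales. The lower bounds from Lemma \ref{product-local-entropy}~(2) are stated at different scales ($\delta$ versus $2^{1/c}\delta$), and we must check that this shift does not change $\liminf_{\delta}(\cdot)/\log\frac1\delta$; this follows from monotonicity in $\delta$. Throughout, the consistency with \eqref{relation} serves as a sanity check.
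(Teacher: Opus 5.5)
Your proposal is correct and follows the paper's own proof. Both sandwich each product quantity using Lemma \ref{product-local-entropy}, with upper bounds of the form $\overline{D}(X)+D(Y)$ and lower bounds of the form $\underline{D}(X)+\mathrm{mdim}_M(Y_{\widetilde E})$, into which the values for $X$ and for $Y_E$ (Proposition \ref{YE-result}) are plugged; your explicit choice of $x_\delta$ is exactly the content of $\underline{D}_{\mathrm{int}}(X,\varepsilon,d)=1$ that the paper invokes. One small correction: the shift from $\delta$ to $2^{1/c}\delta$ is justified by the substitution $\delta'=2^{1/c}\delta$ together with $\log\frac{1}{\delta'}/\log\frac{1}{\delta}\to 1$, not by monotonicity in $\delta$, which only gives an inequality in the unhelpful direction.
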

	\begin{proof}
		Firstly, let us prove $\overline{D}_{\mathrm{int}}(X\times Y,\varepsilon,d\times\rho_{c})=1+b$. Lemma \ref{product-local-entropy} (2) gives
		\[
		\begin{aligned}
			\overline{D}_{\mathrm{int}}(X&\times Y_{E},\varepsilon,d\times \rho_{c})
			\\	&\geq
			\limsup_{\delta \to 0}
			\frac{
				\sup\limits_{x \in X}
				h_{\mathrm{top}}(\Gamma^{\{F_n\}_{n\in\mathbb{N}}}_{\varepsilon}(x),\delta,\{F_n\}_{n\in\mathbb{N}},d)
				+
				h_{\mathrm{top}}(Y_{\widetilde{E}},2^{\frac{1}{c}}\delta,\{F_n\}_{n\in\mathbb{N}},\rho_{c})
			}{
				\log \frac{1}{\delta}
			}
			\\
			&\geq
			\liminf_{\delta\to 0}
			\frac{
				\sup\limits_{x \in X}
				h_{\mathrm{top}}(\Gamma^{\{F_n\}_{n\in\mathbb{N}}}_{\varepsilon}(x),\delta,\{F_n\}_{n\in\mathbb{N}},d)
			}{
				\log\frac{1}{\delta}
			}
			+
			\limsup_{\delta\to 0}
			\frac{
				h_{\mathrm{top}}(Y_{\widetilde{E}},2^{\frac{1}{c}}\delta,\{F_n\}_{n\in\mathbb{N}},\rho_{c})
			}{
				\log \frac{1}{\delta}
			}
			\\
			&=
			\underline{D}_{\mathrm{int}}(X,\varepsilon,d)
			+
			\overline{\mathrm{mdim}}_M(Y_{\widetilde{E}},\{F_{n}\}_{n\in\mathbb{N}},\rho_{c})
			=1+b,
		\end{aligned}
		\]
		where the last equality is due to Proposition \ref{YE} (2).
		The reverse inequality comes from Lemma \ref{product-local-entropy} (1) obviously:
		\(
		\overline{D}_{\mathrm{int}}(X\times Y,\varepsilon,d\times \rho_{c})
		\leq
		\overline{D}_{\mathrm{int}}(X,\varepsilon,d)
		+
		\overline{D}_{\mathrm{int}}(Y,\varepsilon,\rho_{c})
		=1+b.
		\)

		All of the remaining three identities follow similarly from Lemma \ref{product-local-entropy} as below:
		\[
		\underline{D}_{\mathrm{int}}(X\times Y,\varepsilon,d\times \rho_{c})
		\geq
		\underline{D}_{\mathrm{int}}(X,\varepsilon,d)
		+	\underline{\mathrm{mdim}}_M(Y,\{F_{n}\}_{n\in\mathbb{N}},\rho_{c})
		=1+a,
		\]
		\[
		\underline{D}_{\mathrm{int}}(X\times Y,\varepsilon,d\times \rho_{c}) \leq \overline{D}_{\mathrm{int}}(X,\varepsilon,d)
		+
		\underline{D}_{\mathrm{int}}(Y,\varepsilon,\rho_{c})
		=1+a.
		\]
		\[
		\overline{D}_{\mathrm{ext}}(X\times Y,\varepsilon,d\times \rho_{c})
		\geq \underline{D}_{\mathrm{ext}}(X,\varepsilon,d)
		+
		\overline{\mathrm{mdim}}_M(Y,\{F_{n}\}_{n\in\mathbb{N}},\rho_{c})
		=b,
		\]
		\[
		\overline{D}_{\mathrm{ext}}(X\times Y,\varepsilon,d\times \rho_{c})
		\leq
		\overline{D}_{\mathrm{ext}}(X,\varepsilon,d)
		+
		\overline{D}_{\mathrm{ext}}(Y,\varepsilon,\rho_{c})
		=b.
		\]
		\[
		\underline{D}_{\mathrm{ext}}(X\times Y,\varepsilon,d\times \rho_{c})
		\geq
		\underline{D}_{\mathrm{ext}}(X,\varepsilon,d)
		+
		\underline{\mathrm{mdim}}_M(Y,\{F_{n}\}_{n\in\mathbb{N}},\rho_{c})
		=a,
		\]
		\[
		\underline{D}_{\mathrm{ext}}(X\times Y,\varepsilon,d\times \rho_{c})
		\leq \overline{D}_{\mathrm{ext}}(X,\varepsilon,d)
		+
		\underline{D}_{\mathrm{ext}}(Y,\varepsilon,\rho_{c})
		=a.
		\]
	The identities for the preimage and asymptotic versions follow in the same way from the corresponding estimates in Lemma~\ref{product-local-entropy}.
			This completes the proof.
	\end{proof}
	
	As a direct consequence, one has (compared with \eqref{relation}):
	
	\begin{remark} \label{last}
		By choosing suitable metric $\rho_{c}$ and suitable subsets \(E_1,E_2\subset \mathbb Z\), we have
		\begin{eqnarray*}
			\overline{D}_{\mathrm{int}}(X\times Y_{E_1},\varepsilon,d\times \rho_{c})
			& > & \underline{D}_{\mathrm{int}}(X\times Y_{E_1},\varepsilon,d\times \rho_{c}) \\
			& > & \overline{D}_{\mathrm{ext}}(X\times Y_{E_1},\varepsilon,d\times \rho_{c})
			>
			\underline{D}_{\mathrm{ext}}(X\times Y_{E_1},\varepsilon,d\times \rho_{c})
		\end{eqnarray*}
		and
		\begin{eqnarray*}
			\overline{D}_{\mathrm{int}}(X\times Y_{E_2},\varepsilon,d\times \rho_{c})
			& > & 			\overline{D}_{\mathrm{ext}}(X\times Y_{E_2},\varepsilon,d\times \rho_{c}) \\
			& > & 			\underline{D}_{\mathrm{int}}(X\times Y_{E_2},\varepsilon,d\times \rho_{c})
			>
			\underline{D}_{\mathrm{ext}}(X\times Y_{E_2},\varepsilon,d\times \rho_{c}).
		\end{eqnarray*}
		for every
		\(\varepsilon\in(0,1)\).
		The corresponding preimage and asymptotic versions also hold.
	\end{remark}

	\section*{Acknowledgements}
	
	We would like to thank Professors Ercai Chen, Dou Dou, Rui Yang and Xiaoyao Zhou for useful discussions. Guohua Zhang is supported by the National Key Research and Development Program of China (No. 2021YFA1003204).
	Ruifeng Zhang is supported by National Natural Science Foundation of China (No. 12031019).

	\bibliographystyle{alpha}
	
	
	

	
\end{document}